\documentclass[a4paper,12pt]{article}
\setlength{\textwidth}{16cm}
\setlength{\textheight}{23cm}
\setlength{\oddsidemargin}{0mm}
\setlength{\topmargin}{-1cm}

\usepackage{latexsym}
\usepackage{amsmath}
\usepackage{amssymb}
\usepackage{enumerate}
\usepackage{bm}
\usepackage{mathrsfs}

\if0
\usepackage[anchorcolor=blue,%
 bookmarks=true,%
 bookmarksnumbered=true,%
 colorlinks=true,%
 citecolor=cyan,%
 linkcolor=blue,%
 dvipdfmx
]{hyperref}

\fi

\usepackage{theorem}
\newtheorem{theorem}{Theorem}[section]
\newtheorem{proposition}[theorem]{Proposition}
\newtheorem{lemma}[theorem]{Lemma}
\newtheorem{corollary}[theorem]{Corollary}

\theorembodyfont{\rmfamily}
\newtheorem{remark}[theorem]{Remark}

\newtheorem{definition}[theorem]{Definition}
\newtheorem{step}{Step}

\newtheorem{proof}{\textmd{\textit{Proof.}}}

\makeatletter

\@addtoreset{equation}{section}
\makeatother

\newcommand{\qedd}{\hfill \square}
\newcommand{\ve}{\varepsilon}
\newcommand{\ez}{\epsilon}
\newcommand{\del}{\partial}
\newcommand{\lra}{\longrightarrow}
\newcommand{\e}{\mathrm{e}}
\def\td{\mathrm{d}}

\newcommand{\N}{\ensuremath{\mathbb{N}}}
\newcommand{\Z}{\ensuremath{\mathbb{Z}}}
\newcommand{\R}{\ensuremath{\mathbb{R}}}

\newcommand{\cA}{\ensuremath{\mathcal{A}}}

\newcommand{\fm}{\ensuremath{\mathfrak{m}}}
\newcommand{\bb}{\ensuremath{\mathbf{b}}}

\newcommand{\sB}{\ensuremath{\mathsf{B}}}
\newcommand{\sI}{\ensuremath{\mathsf{I}}}
\newcommand{\sJ}{\ensuremath{\mathsf{J}}}
\newcommand{\sR}{\ensuremath{\mathsf{R}}}

\newcommand{\LL}{\ensuremath{\mathscr{L}}}

\def\vol{\mathop{\mathrm{vol}}\nolimits}

\def\Ric{\mathop{\mathrm{Ric}}\nolimits}
\def\trace{\mathop{\mathrm{trace}}\nolimits}

\newcommand{\Grad}{\bm{\nabla}}
\newcommand{\Lap}{\bm{\Delta}}

\newcommand{\wt}[1]{\widetilde{#1}}
\newcommand{\rev}[1]{\overleftarrow{#1}}
\newcommand{\ol}[1]{\overline{#1}}

\title{Geometry of weighted Lorentz--Finsler manifolds~II:\\ A splitting theorem}

\author{Yufeng LU\thanks{
Department of Mathematics, Osaka University, Osaka 560-0043, Japan
({\sf yufenglu.math@gmail.com}, {\sf s.ohta@math.sci.osaka-u.ac.jp})}
\and
Ettore MINGUZZI\thanks{
Dipartimento di Matematica e Informatica ``U. Dini'', Universit\`a degli Studi di Firenze,
Via S.~Marta 3, I-50139 Firenze, Italy ({\sf ettore.minguzzi@unifi.it})}
\and
Shin-ichi OHTA\footnotemark[1] \textsuperscript{,}\thanks{
RIKEN Center for Advanced Intelligence Project (AIP),
1-4-1 Nihonbashi, Tokyo 103-0027, Japan}}

\date{}
\pagestyle{plain}

\begin{document}

\maketitle

\begin{abstract}
We show an analogue of the Lorentzian splitting theorem
for weighted Lorentz--Finsler manifolds:
If a weighted Berwald spacetime of nonnegative weighted Ricci curvature
satisfies certain completeness and metrizability conditions and includes a timelike straight line,
then it necessarily admits a one-dimensional family of isometric translations
generated by the gradient vector field of a Busemann function.
Moreover, our formulation in terms of the $\ez$-range introduced in our previous work enables us
to unify the previously known splitting theorems for weighted Lorentzian manifolds
by Case and Woolgar--Wylie into a single framework.
\end{abstract}

\section{Introduction}

The purpose of this work is to develop an analogue of the \emph{Lorentzian splitting theorem}
for weighted Lorentz--Finsler manifolds.
The Lorentzian version reads as follows.

\begin{theorem}[Lorentzian splitting theorem]\label{th:Lsplit}
Let $(M,g)$ be a connected, timelike geodesically complete spacetime which satisfies
the strong energy condition, i.e., $\Ric(v) \ge 0$ for all timelike vectors $v$.
If $(M,g)$ includes a timelike straight line,
then it is isometric to $(\mathbb{R} \times \Sigma, -\td t^2 + h)$,
where $(\Sigma, h)$ is a complete Riemannian manifold.
\end{theorem}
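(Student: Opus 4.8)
The plan is to follow the classical strategy for the Lorentzian splitting theorem developed by Eschenburg, Galloway and Newman, which is the Lorentzian counterpart of the Cheeger--Gromoll argument. Write $n=\dim M$ and let $\gamma\colon\R\to M$ be the given timelike straight line, parametrized by arclength, so that the time-separation function $d$ satisfies $d(\gamma(s),\gamma(s'))=s'-s$ for $s<s'$. On a neighborhood of $\gamma$ inside $I^{+}(\gamma)\cap I^{-}(\gamma)$ one introduces the two \emph{Busemann functions}
\[
  b^{+}(x):=\lim_{s\to\infty}\bigl(d(x,\gamma(s))-s\bigr),\qquad
  b^{-}(x):=\lim_{s\to\infty}\bigl(d(\gamma(-s),x)-s\bigr);
\]
the reverse triangle inequality shows that $s\mapsto d(x,\gamma(s))-s$ is non-decreasing and bounded above there, so the limits exist, and a limit-curve argument produces maximizing geodesic co-rays asymptotic to $\gamma$ along which they are realized. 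A first difficulty is that $(M,g)$ is not assumed globally hyperbolic, so the existence of the maximizing segments and co-rays, and the continuity of $b^{\pm}$, have to be wrung out of timelike geodesic completeness together with the presence of the line; this is precisely the technical core of Newman's treatment and is where completeness enters decisively.

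Next, applying the reverse triangle inequality to $\gamma(-s)\le x\le\gamma(s)$ gives $d(\gamma(-s),x)+d(x,\gamma(s))\le d(\gamma(-s),\gamma(s))=2s$, whence $w:=b^{+}+b^{-}\le 0$; and evaluating along $\gamma$ yields $b^{\pm}(\gamma(r))=\mp r$, so $w\equiv 0$ on $\gamma$. Thus the line lies in an interior maximum set of $w$. The strong energy condition enters through the Lorentzian Laplacian comparison theorem: along the maximizing segment from $x$ to $\gamma(s)$ --- which, because $\gamma$ is a line, is free of cut points near $\gamma$, so that $x\mapsto d(x,\gamma(s))$ is smooth there and touches $b^{+}$ from below along $\gamma$ --- Raychaudhuri's equation together with $\Ric(\dot\sigma,\dot\sigma)\ge 0$ forces $\Box\, d(\cdot,\gamma(s))\ge -(n-1)/d(\cdot,\gamma(s))$, and the right-hand side tends to $0$ as $s\to\infty$. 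Hence for every $\ve>0$ the function $b^{+}$, and symmetrically $b^{-}$, admits a smooth lower support function at each point of $\gamma$ with d'Alembertian $\ge-\ve$; that is, $b^{\pm}$ are subsolutions of $\Box u=0$ in the support-function sense, and therefore so is $w$.

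One then invokes a strong maximum principle for such subsolutions at the interior maximum $w\equiv 0$ to conclude that $w\equiv 0$ on a full neighborhood of $\gamma$. This is the main obstacle, and the point where the Lorentzian theory departs from the Riemannian one: the d'Alembertian $\Box$ is hyperbolic rather than elliptic, so the maximum principle is not available directly. The resolution (Eschenburg's argument, refined by Newman) exploits that on the maximum set the gradients $\nabla b^{\pm}$ are timelike and nearly opposite, so that the comparison can be localized to the \emph{spacelike} level hypersurfaces $\{b^{+}=c\}$, on which the induced operator is elliptic and Calabi's trick applies. Once $w\equiv 0$ near $\gamma$ one has $b^{+}=-b^{-}$ there, so $b^{+}$ is simultaneously a sub- and a supersolution of $\Box u=0$; a bootstrap on the spacelike level hypersurfaces promotes $b^{+}$ from the a priori $C^{1,1}$-regularity afforded by the support functions to a smooth function with $\Box b^{+}=0$, and equality in the Laplacian comparison forces, through the rigidity case of Raychaudhuri's equation (vanishing shear, $\Ric(\nabla b^{+},\cdot)=0$, flat Jacobi fields), that $\nabla b^{+}$ is a parallel unit timelike vector field whose orthogonal level sets $\Sigma_{c}:=\{b^{+}=c\}$ are totally geodesic spacelike hypersurfaces.

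Finally, a parallel unit timelike vector field $V=\nabla b^{+}$ on the connected manifold $M$ yields, by the de Rham-type splitting theorem and using that the integral curves of $V$ are complete timelike geodesics (a consequence of timelike geodesic completeness), a global isometry $M\cong(\R\times\Sigma,-\td t^{2}+h)$ with $t=-b^{+}$, $\Sigma=\Sigma_{0}$ and $h$ the induced Riemannian metric; the flow of $V$, i.e.\ $t\mapsto t+c$, is the resulting family of isometries. The last point is the completeness of $(\Sigma,h)$: an incomplete unit-speed geodesic of $(\Sigma,h)$ is also a geodesic of the product $M$, and confronting it with nearby timelike geodesics (equivalently, a Hopf--Rinow argument on $\Sigma$ using that bounded sets of $\Sigma$ have compact closure, again a consequence of completeness of $M$) yields a contradiction. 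In short: (i) build $b^{\pm}$ and secure the required causality by a limit-curve argument; (ii) show $w=b^{+}+b^{-}\le 0$ with equality on $\gamma$, and that $b^{\pm}$ are support-sense subsolutions of $\Box u=0$ via the Laplacian comparison under $\Ric\ge 0$; (iii) run the Lorentzian strong maximum principle (the hard step), deduce $w\equiv 0$ near $\gamma$ and thence the smoothness of $b^{+}$ with $\nabla b^{+}$ parallel; (iv) globalize through the de Rham splitting and verify completeness of the spacelike factor.
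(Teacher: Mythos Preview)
The paper does not give its own proof of Theorem~\ref{th:Lsplit}; it is quoted as background from the literature (Newman, with the simplified treatment of Galloway--Horta and Eschenburg cited). What the paper actually proves is the weighted Lorentz--Finsler generalization (Theorem~\ref{th:Gsplit}), and when that argument is specialized to a Lorentzian manifold with trivial weight it reduces to exactly the strategy you describe: construct the Busemann functions $\bb^{\eta}$ and $\ol{\bb}{}^{\eta}$ and their support functions via timelike asymptotes (Sections~\ref{sc:ray}--\ref{sc:busemann}, following \cite{GH}); use the Laplacian comparison theorem and a Hessian bound together with Eschenburg's maximum-principle argument on a spacelike hypersurface to force $\bb^{\eta}+\ol{\bb}{}^{\eta}=0$ near the line (Proposition~\ref{pr:max}, following \cite{Es}); then use the Riccati/Raychaudhuri rigidity to make $\Grad\ol{\bb}{}^{\eta}$ parallel and invoke the de~Rham--Wu decomposition, globalizing by an open--closed argument (Section~\ref{sc:split}). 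So your outline is correct and matches the route the paper follows for its more general result.

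Two small inaccuracies worth fixing: the Busemann functions are a priori only Lipschitz (Theorem~\ref{th:GH3.7}), not $C^{1,1}$, before the maximum-principle step; and the support functions built from asymptotes $\zeta$ touch $b^{+}$ from the appropriate side at the base point $\zeta(0)$, not along all of $\gamma$ as your phrasing suggests---the smoothness of $d(\cdot,\zeta(t))$ near $\zeta(0)$ is what Proposition~\ref{pr:GH3.9} secures.
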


This splitting theorem was conjectured by Yau \cite{Ya} in 1982
and established by Newman \cite{Ne} in 1990.
We refer to \cite{GH} for a simplified proof (see also \cite[Chapter~14]{BEE}),
\cite{Ga-split} for a variant splitting theorem assuming the global hyperbolicity
instead of the geodesic completeness, and to \cite{Fl,Ga-rela} for surveys
including some similarities and differences between Theorem~\ref{th:Lsplit}
and the \emph{Cheeger--Gromoll splitting theorem} \cite{CG} in Riemannian geometry.
Some generalizations to \emph{weighted Lorentzian manifolds}
can be found in \cite{Ca,WW2}.

\emph{Lorentz--Finsler manifolds} generalize Lorentzian manifolds
in the same manner as Finsler manifolds generalize Riemannian manifolds
(see Section~\ref{sc:pre} for the precise definition).
This class of space(time)s has been attracting growing interest from
the viewpoint of the investigation of less regular spacetimes
as well as the synthetic geometric study of Lorentzian manifolds.
The latter study was motivated by the fruitful theory in the positive-definite case,
namely the \emph{curvature-dimension condition}
as a synthetic notion of lower (weighted) Ricci curvature bound.
Finsler manifolds of weighted Ricci curvature bounded below
provide an important ``non-Riemannian'' class of spaces satisfying
the curvature-dimension condition (see \cite{Oint,Obook}),
and this fact motivated the introduction of a reinforced condition called
the \emph{Riemannian curvature-dimension condition}.
We refer to a survey \cite{Am} for the successful theory of metric measure spaces
satisfying the Riemannian curvature-dimension condition.
In the recent development of a Lorentzian counterpart to the curvature-dimension condition
(see, e.g., \cite{CM,Mc}),
(weighted) Lorentz--Finsler manifolds will have a similar significance to Finsler manifolds,
and then it is important to know what results in Lorentzian geometry
can be generalized to the Lorentz--Finsler setting,
which is an aim of our work.

In our previous papers \cite{LMO,LMO2},
we have investigated time oriented Lorentz--Finsler manifolds
equipped with weight functions on causal vectors,
that we call \emph{weighted Finsler spacetimes} (see Section~\ref{sc:wLF} for the precise setting).
We established several singularity theorems in \cite{LMO}
and comparison theorems including the Laplacian comparison theorem in \cite{LMO2}
(see Theorem~\ref{th:Lcomp}).
On one hand, the Laplacian comparison theorem is a key ingredient
in the proof of the Lorentzian splitting theorem.
On the other hand, the Cheeger--Gromoll splitting theorem was generalized to
(weighted) Finsler manifolds in \cite{Osplit}
(with the help of the Laplacian comparison theorem in \cite{OShf}).
Hence it is natural to proceed to splitting theorems for weighted Finsler spacetimes.

Our main result is the following.

\begin{theorem}[Splitting theorem; Theorem~\ref{th:Gsplit}]\label{th:LFsplit}
Let $(M,L,\Psi)$ be a connected, timelike geodesically complete, weighted Berwald spacetime
of $\Ric_N \ge 0$ on $\Omega$ with $N \in (-\infty,0) \cup [n,+\infty]$,
where $\dim M=n+1$.
Suppose also that
\begin{enumerate}[$(1)$]
\item\label{cplt}
the $\ez$-completeness holds for some $\ez$ in the corresponding $\ez$-range \eqref{eq:erange},
\item\label{line}
there is a timelike straight line $\eta:\R \lra M$,
\item\label{met}
there is a timelike geodesically complete Lorentzian structure $g$ of $M$
whose Levi-Civita connection coincides with the Chern connection of $L$
such that $\eta$ is timelike for $g$,
\item\label{cover}
in the universal cover $(\wt{M},\tilde{g})$ of $(M,g)$,
a lift of $\eta$ is a timelike straight line.
\end{enumerate}
Then the Lorentzian manifold $(M,g_{\Grad \ol{\bb}{}^{\eta}})$ isometrically splits
in the sense that there exists an $n$-dimensional complete Riemannian manifold $(\Sigma,h)$
such that $(M,g_{\Grad \ol{\bb}{}^{\eta}})$ is isometric to $(\R \times \Sigma,-\td t^2 +h)$.

Furthermore, for every $x \in \Sigma$,
the image $\zeta(t)$ of $(t,x)$ is a straight line bi-asymptotic to $\eta$
and $\Psi$ is constant on $\zeta$.
\end{theorem}

Let us quickly explain the terms in the theorem.
A \emph{Berwald spacetime} is a special kind of Finsler spacetime
having various fine properties (see Definition~\ref{df:Ber}),
$\Psi:M \lra \R$ is a \emph{weight function} and $\Ric_N$ is
the associated \emph{weighted Ricci curvature} (Definition~\ref{df:wRic}),
and $\Omega$ is the set of future-directed timelike vectors.
The Lorentzian structure $g_{\Grad \ol{\bb}{}^{\eta}}$
($x \longmapsto g_{\Grad \ol{\bb}{}^{\eta}(x)}$) is the second order approximation
of the original Lorentz--Finsler structure $L$ in the direction of
the gradient vector field $\Grad \ol{\bb}{}^{\eta}$ of the Busemann function
$\ol{\bb}{}^{\eta}$ for the ray $\bar{\eta}$ in the past direction
(see \eqref{eq:g_v}, \eqref{eq:grad} and Subsection~\ref{ssc:lines}).
The existence of $g$ as in \eqref{met} can be regarded as
a Lorentzian analogue of Szab\'o's \emph{metrizability theorem}.
On one hand, it is not known if it holds true in our setting (see Remark~\ref{rm:met}).
On the other hand, the metrizability is a necessary condition for the splitting as above
(one can take $g=g_{\Grad \ol{\bb}{}^{\eta}}$,
while $g$ in Theorem~\ref{th:LFsplit} may not coincide with $g_{\Grad \ol{\bb}{}^{\eta}}$).
The last hypothesis \eqref{cover} is needed
merely for technical reasons and likely redundant.

In addition to the generalization from Lorentzian to Lorentz--Finsler manifolds,
our approach using the \emph{$\ez$-range} (introduced in \cite{LMO}; see \eqref{eq:erange})
is more general than \cite{Ca,WW2} for weighted Lorentzian manifolds.
The \emph{$\ez$-completeness} (Definition~\ref{df:cmplt}) in the hypothesis \eqref{cplt}
is an appropriate generalization
of the timelike geodesic completeness involving the weight function $\Psi$,
and the use of the $\ez$-range enables us to unify the splitting theorems
in \cite{Ca,WW2} into a single framework
($N \in [n,+\infty)$ with $\ez=1$ recovers \cite{Ca},
and $N \in (-\infty,0) \cup \{+\infty\}$ with $\ez=0$ corresponds to \cite{WW2}).
We remark that $\Ric_N$ in this paper corresponds to $\Ric_f^{N+1}$ in \cite{WW2}
(see Remark~\ref{rm:N}).

The main idea of the splitting theorem is extending the line $\eta$
into a congruence of lines (precisely, asymptotes of $\eta$).
Such lines in fact coincide with integral curves of $\Grad \ol{\bb}{}^{\eta}$.
A distinct feature of the Lorentz--Finsler version is that
the splitting is done for the Lorentzian structure $g_{\Grad \ol{\bb}{}^{\eta}}$
and the original Lorentz--Finsler structure $L$ itself does not possess a product structure.
This is actually a similar phenomenon to the Finsler case;
note that normed spaces do not isometrically split.
Nonetheless, we can show the following in the same spirit as \cite[\S 5]{Osplit}.

\begin{corollary}[Isometric translations; Corollary~\ref{cr:Gtrans}]\label{cr:LFtrans}
Let $(M,L,\Psi)$ be as in Theorem~$\ref{th:LFsplit}$.
Then, in the product structure $M=\R \times \Sigma$, the translations
\[ \Phi_t(s,x):=(s+t,x), \qquad (s,x) \in \R \times \Sigma,\ t \in \R, \]
are isometric transformations of $(M,L)$.
\end{corollary}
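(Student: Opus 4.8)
The plan is to realise the family $\{\Phi_t\}$ as the flow of a vector field that is parallel for the Chern connection of $L$, and then to convert that parallelism into an $L$-isometry by means of the Berwald hypothesis. Write $V:=\Grad \ol{\bb}{}^{\eta}$ and let $\nabla$ denote the Chern connection of $L$, which in the Berwald case is a reference-independent, torsion-free affine connection on $M$. Since the integral curves of $V$ are exactly the lines $\zeta$ of Theorem~\ref{th:LFsplit} (the asymptotes of $\eta$, i.e.\ the curves $\tau\mapsto(\tau,x)$ in $M=\R\times\Sigma$), the flow of $V$ is precisely $\{\Phi_t\}_{t\in\R}$. The first step is to establish $\nabla V=0$. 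For this I would invoke the output of the proof of Theorem~\ref{th:LFsplit}: the Busemann function $\ol{\bb}{}^{\eta}$ is smooth with $\Hess\ol{\bb}{}^{\eta}=0$; in the Berwald setting this means $d\ol{\bb}{}^{\eta}$ is $\nabla$-parallel, and because in a Berwald spacetime $\nabla$-parallel transport along any curve is a linear isometry for $L$ (Definition~\ref{df:Ber}) — hence intertwines the Legendre transforms — the dual vector field $V$ is $\nabla$-parallel too. Equivalently, under the isometry $(M,g_{\Grad \ol{\bb}{}^{\eta}})\cong(\R\times\Sigma,-\td t^2+h)$ the field $V$ corresponds to $\pm\partial_t$, which is Levi-Civita parallel for $g_{\Grad \ol{\bb}{}^{\eta}}$, and that Levi-Civita connection coincides with $\nabla$ (this is the metrizability by $g_{\Grad \ol{\bb}{}^{\eta}}$ noted after Theorem~\ref{th:LFsplit}).

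With $\nabla V=0$ available, I would run the standard variation-field argument, exactly as in \cite[\S5]{Osplit}. Fix $t\in\R$ and $p\in M$, and let $\gamma:[0,t]\to M$ be the integral curve of $V$ with $\gamma(0)=p$, so $\gamma(t)=\Phi_t(p)$. For $w\in T_pM$ pick a curve $c$ with $c(0)=p$, $\dot c(0)=w$, and put $\sigma(s,\tau):=\Phi_\tau(c(s))$; then $J(\tau):=\partial_s\sigma(s,\tau)\big|_{s=0}=d\Phi_\tau(w)$ satisfies $J(0)=w$ and $J(t)=d\Phi_t(w)$. Using $\partial_\tau\sigma=V\circ\sigma$, the torsion-freeness of $\nabla$ on $M$, and $\nabla V=0$,
\[
\nabla_{\dot\gamma}J=\nabla_{\partial_\tau}\partial_s\sigma\big|_{s=0}=\nabla_{\partial_s}\partial_\tau\sigma\big|_{s=0}=\nabla_{J}V=0,
\]
so $J$ is $\nabla$-parallel along $\gamma$; that is, $d\Phi_t:T_pM\to T_{\Phi_t(p)}M$ is $\nabla$-parallel transport along $\gamma$. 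Since $(M,L)$ is a Berwald spacetime, parallel transport preserves the cone distribution and the values of $L$ on it, so $\Phi_t$ sends causal vectors to causal vectors with $L\big(d\Phi_t(w)\big)=L(w)$; hence $\Phi_t$ is an isometric transformation of $(M,L)$. Moreover $\Psi\circ\Phi_t=\Psi$, because $\Psi$ is constant on each line $\zeta$ by Theorem~\ref{th:LFsplit}, so $\Phi_t$ in fact preserves the whole weighted structure $(M,L,\Psi)$.

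I expect the only genuinely delicate point to be this first step: passing from the Lorentzian splitting of $g_{\Grad \ol{\bb}{}^{\eta}}$ to a statement about $L$ itself, i.e.\ extracting $\nabla V=0$ and transporting it to $L$. This is precisely where the Berwald hypothesis is indispensable, since it is what makes $\nabla$-parallel transport an $L$-isometry; without it one obtains only the product splitting of the approximating Lorentzian metric, not the isometric translations of $(M,L)$. Once $\nabla V=0$ is in place, the rest is the short computation displayed above, entirely parallel to \cite[\S5]{Osplit}.
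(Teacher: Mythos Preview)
Your proposal is correct and follows essentially the same route as the paper: both arguments show that $d\Phi_t$ is Chern-parallel transport along the asymptote line $\zeta$ and then use the Berwald property that such parallel transport preserves $L$. The only cosmetic difference is that you deduce the parallelism of $J(\tau)=d\Phi_\tau(w)$ from $\nabla V=0$ together with torsion-freeness, whereas the paper cites \cite[Proposition~3.2]{LMO} to identify $D_{\dot\zeta}$ with $D_{\dot\zeta}^{g_{\Grad\ol{\bb}}}$ along the integral curves and then computes $\frac{\td}{\td t}[L(V)]=g_V(D_{\dot\zeta}V,V)=0$ directly.
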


As a weight, one can also employ a function $\psi_{\fm}$ on causal vectors
induced from a measure $\fm$ on $M$ in the same manner as \cite{Oint} in the Finsler setting
(see Remarks~\ref{rm:meas}, \ref{rm:psi_m} and \ref{rm:msplit} for details).
In this case, $\Phi_t$ is measure-preserving (see Remark~\ref{rm:msplit}).
We remark that the Berwald condition is essential at least in Corollary~\ref{cr:LFtrans},
due to the extreme flexibility of general Lorentz--Finsler structures.
The timelike congruence extending $\eta$ probes the indicatrix
only near the vector field $\Grad \ol{\bb}{}^{\eta}$.
Thus, from the ensuing data of $g_{\Grad \ol{\bb}{}^{\eta}}$,
one cannot recover the indicatrix
far from $\Grad \ol{\bb}{}^{\eta}$ (see Remark~\ref{rm:nonBer} for details).

As for what concerns our strategy,
we first analyze rays and associated Busemann functions along the lines of \cite{GH}.
Then, to avoid the use of Bartnik's existence theorem \cite[Theorem 4.1]{Ba}
of maximal spacelike hypersurfaces which has no analogue in the Lorentz--Finsler setting at present,
we follow a more direct argument given in \cite{Es} to show the splitting theorem.
We remark that our argument will be indebted to various fine properties of Berwald spacetimes,
and it is unclear if Theorem~\ref{th:LFsplit} can be generalized to non-Berwald Finsler spacetimes
(cf.\ \cite[\S 4]{Osplit} in the Finsler setting).

This article is organized as follows.
We review necessary notions for Lorentz--Finsler manifolds in Section~\ref{sc:pre}
and discuss the weighted situation in Section~\ref{sc:wLF}.
We study rays and associated Busemann functions
in Sections~\ref{sc:ray} and \ref{sc:busemann}, respectively.
Then Section~\ref{sc:max} is devoted to a key step concerning
the behavior of the Busemann functions near the straight line.
We finally prove the splitting theorem in Section~\ref{sc:split}.

\section{Preliminaries for Finsler spacetimes}\label{sc:pre}

We briefly recall necessary notions for Lorentz--Finsler manifolds
(see also \cite{LMO,LMO2}).
One can in fact discuss similarly to the positive-definite case
(see, e.g., \cite{BCS,Obook,Shlec}) to some extent.
We refer to \cite{BEE,ON} for the basics of Lorentzian geometry
and to \cite{Min-spray,Min-causality,Min-Rev} for some generalizations
including Lorentz--Finsler manifolds.

Throughout the article,
let $M$ be a connected $C^{\infty}$-manifold without boundary of dimension $n+1$ ($\ge 2$).
Given a local coordinate system $(x^\alpha)_{\alpha=0}^n$ on an open set $U\subset M$,
we will use the fiber-wise linear coordinates
$v =\sum_{\alpha=0}^n v^{\alpha} (\partial/\partial x^{\alpha})|_x,$ $x \in U$.
%
%
We follow Beem's definition \cite{Be} of a Finsler version of Lorentzian manifolds as follows.

\begin{definition}[Lorentz--Finsler structures]\label{df:LFstr}
A \emph{Lorentz--Finsler structure} of $M$ is a function
$L:TM \lra \R$ satisfying the following conditions:
\begin{enumerate}[(1)]
\item $L \in C^{\infty}(TM \setminus 0)$, where $0$ denotes the zero section;
\item $L(cv)=c^2 L(v)$ for all $v \in TM$ and $c>0$;
\item For any $v \in TM \setminus 0$, the symmetric matrix
\begin{equation}\label{eq:g_ij}
\big( g_{\alpha \beta}(v) \big)_{\alpha,\beta=0}^n
 :=\bigg( \frac{\del^2 L}{\del v^\alpha \del v^\beta}(v) \bigg)_{\alpha,\beta=0}^n
\end{equation}
is non-degenerate with signature $(-,+,\ldots,+)$.
\end{enumerate}
Then we call $(M,L)$ a ($C^{\infty}$-)\emph{Lorentz--Finsler manifold}.
\end{definition}

We say that $(M,L)$ is \emph{reversible} if $L(-v)=L(v)$ for all $v\in TM$.
For $v \in T_xM \setminus \{0\}$, we define a Lorentzian metric $g_v$ of $T_xM$
by using \eqref{eq:g_ij} as
\begin{equation}\label{eq:g_v}
g_v \Bigg( \sum_{\alpha=0}^n a_\alpha \frac{\del}{\del x^\alpha}\bigg|_x,
 \sum_{\beta=0}^n b_\beta \frac{\del}{\del x^\beta}\bigg|_x \Bigg)
 :=\sum_{\alpha,\beta=0}^n g_{\alpha \beta}(v) a_\alpha b_\beta.
\end{equation}
Then we have $g_v(v,v)=2L(v)$ by Euler's homogeneous function theorem.

A tangent vector $v \in TM $ is said to be \emph{timelike} (resp.\ \emph{null})
if $L(v)<0$ (resp.\ $L(v)=0$).
We say that $v$ is \emph{lightlike} if it is null and nonzero,
and \emph{causal} (or \emph{non-spacelike}) if it is timelike or lightlike
(i.e., $L(v) \le 0$ and $v \neq 0$).
\emph{Spacelike} vectors are those for which $L(v)>0$ or $v=0$.
We denote by $\Omega'_x \subset T_xM$ the set of timelike vectors and
$\Omega' :=\bigcup_{x \in M} \Omega'_x$.
For later use, we define, for causal vectors $v$,
\begin{equation}\label{eq:LtoF}
F(v) :=\sqrt{-2L(v)} =\sqrt{-g_v(v,v)}.
\end{equation}

Note that $\Omega'_x \neq \emptyset$, every connected component of $\Omega'_x$
is a convex cone (see \cite{Be}, \cite[Lemma~2.3]{LMO}),
and that the closures of different components intersect only at $0$
(see \cite[Proposition~1]{Min-cone}).
In general, the number of connected components of $\Omega_x'$ may be larger than 2
(see \cite{Be}, \cite[Example~2.4]{LMO}).
This fact will not affect our discussion because we shall deal with
only future-directed (timelike or causal) vectors.
We also remark that $\Omega'_x$ has exactly two connected components
in reversible Lorentz--Finsler manifolds of dimension $\ge 3$
(see \cite[Theorem~7]{Min-cone}).

\begin{definition}[Finsler spacetimes]\label{df:spacetime}
If a Lorentz--Finsler manifold $(M,L)$ admits a smooth timelike vector field $X$,
then $(M,L)$ is said to be \emph{time oriented} (by $X$).
A time oriented Lorentz--Finsler manifold will be called a \emph{Finsler spacetime}.
\end{definition}

In a Finsler spacetime oriented by $X$,
a causal vector $v \in T_xM$ is said to be \emph{future-directed}
if it lies in the same connected component of $\overline{\Omega'}\!_x \setminus \{0\}$ as $X(x)$.
We denote by $\Omega_x \subset \Omega'_x$ the set of future-directed timelike vectors,
and define
\[ \Omega :=\bigcup_{x \in M} \Omega_x, \qquad
 \overline{\Omega} :=\bigcup_{x \in M} \overline{\Omega}_x, \qquad
 \overline{\Omega} \setminus 0 :=\bigcup_{x \in M} (\overline{\Omega}_x \setminus \{0\}). \]
A $C^1$-curve in $(M,L)$ is said to be \emph{timelike} (resp.\ \emph{causal})
if its tangent vector is always timelike (resp.\ causal).
All causal curves will be future-directed.

Given $x,y \in M$, we write $x \ll y$ (resp.\ $x<y$)
if there is a future-directed timelike (resp.\ causal) curve from $x$ to $y$,
and $x \le y$ means that $x=y$ or $x<y$.
Then we define the \emph{chronological past} and \emph{future} of $x$ by
\[ I^-(x):=\{y \in M \,|\, y \ll x\}, \qquad I^+(x):=\{y \in M \,|\, x \ll y\}. \]

We also define the \emph{Lorentz--Finsler distance} $d(x,y)$ for $x,y \in M$ by
\[ d(x,y) :=\sup_{\eta} \ell(\eta), \qquad
 \ell(\eta) :=\int_0^1 F\big( \dot{\eta}(t) \big) \,\td t, \]
where $\eta:[0,1] \lra M$ runs over all causal curves from $x$ to $y$
(recall \eqref{eq:LtoF} for $F$).
We set $d(x,y):=0$ if there is no causal curve from $x$ to $y$ (namely $x \not< y$).
A constant speed causal curve attaining the above supremum,
which is a causal \emph{geodesic}, is said to be \emph{maximizing}.
In general, causal geodesics are locally maximizing in the same way
as geodesics are locally minimizing in Riemannian or Finsler geometry.

In general, the distance function $d$ is only lower semi-continuous and can be infinite
(see, e.g., \cite[Proposition~6.7]{Min-Ray}).
In \emph{globally hyperbolic} Finsler spacetimes (see \cite{LMO} for the definition),
$d$ is finite and continuous,
and any pair of points $x,y \in M$ with $x<y$ admits a maximizing geodesic from $x$ to $y$
(see \cite[Propositions~6.8, 6.9]{Min-Ray}).
In what follows, however, we will not assume the global hyperbolicity.


Next we introduce the covariant derivative and the Ricci curvature.
Define
\[ \gamma^{\alpha}_{\beta \delta} (v)
 :=\frac{1}{2} \sum_{\lambda=0}^n g^{\alpha \lambda}(v)
 \bigg\{ \frac{\del g_{\lambda \delta}}{\del x^{\beta}}(v) +\frac{\del g_{\beta \lambda}}{\del x^{\delta}}(v)
 -\frac{\del g_{\beta \delta}}{\del x^{\lambda}}(v) \bigg\} \]
for $\alpha,\beta,\delta =0,1,\ldots,n$ and $v \in TM\setminus 0$,
where $(g^{\alpha \beta}(v))$ is the inverse matrix of $(g_{\alpha\beta}(v))$,
\[ G^{\alpha}(v) :=\frac{1}{2}\sum_{\beta,\delta=0}^n \gamma^{\alpha}_{\beta \delta}(v) v^{\beta} v^{\delta},
 \qquad N^{\alpha}_{\beta}(v) :=\frac{\del G^{\alpha}}{\del v^{\beta}}(v) \]
for $v \in TM \setminus 0$ ($G^{\alpha}(0)=N^{\alpha}_{\beta}(0):=0$ by convention), and
\begin{equation}\label{eq:Gamma}
\Gamma^{\alpha}_{\beta \delta}(v):=\gamma^{\alpha}_{\beta \delta}(v)
 -\frac{1}{2}\sum_{\lambda,\mu=0}^n g^{\alpha \lambda}(v)
 \bigg( \frac{\del g_{\lambda \delta}}{\del v^{\mu}}N^{\mu}_{\beta}
 +\frac{\del g_{\beta \lambda}}{\del v^{\mu}} N^{\mu}_{\delta}
 -\frac{\del g_{\beta \delta}}{\del v^{\mu}} N^{\mu}_{\lambda} \bigg)(v)
\end{equation}
on $TM \setminus 0$.
Then the \emph{covariant derivative} of a vector field
$X=\sum_{\alpha=0}^n X^{\alpha} (\del/\del x^{\alpha})$ is defined as
\[ D_v^w X :=\sum_{\alpha,\beta=0}^n
 \Bigg\{ v^{\beta} \frac{\del X^{\alpha}}{\del x^{\beta}}(x)
 +\sum_{\delta=0}^n \Gamma^{\alpha}_{\beta \delta}(w) v^{\beta} X^{\delta}(x) \Bigg\}
 \frac{\del}{\del x^{\alpha}} \bigg|_x \]
for $v \in T_xM$ with reference vector $w \in T_xM \setminus \{0\}$.
We remark that the functions $\Gamma^{\alpha}_{\beta \delta}$ in \eqref{eq:Gamma}
are the coefficients of the \emph{Chern}(--\emph{Rund}) \emph{connection}.

In the Lorentzian case, $g_{\alpha\beta}$ is constant in each tangent space
(thus $\Gamma^{\alpha}_{\beta\delta}=\gamma^{\alpha}_{\beta\delta}$)
and the covariant derivative does not depend on the choice of a reference vector.
In the Lorentz--Finlser setting, the following class is worth considering.

\begin{definition}[Berwald spacetimes]\label{df:Ber}
A Finsler spacetime $(M,L)$ is said to be of \emph{Berwald type}
(or called a \emph{Berwald spacetime}) if $\Gamma^{\alpha}_{\beta \delta}$ is constant
on the slit tangent space $T_xM \setminus \{0\}$ for any $x$
in the domain of every local coordinate system.
\end{definition}

The Berwald condition is strong but provides a reasonable (nontrivial) class
where we can mimic Lorentzian techniques.
By the very definition, the covariant derivative on a Berwald spacetime
is defined independently from the choice of a reference vector.
Examples of Berwald spacetimes include Lorentzian manifolds
and flat Lorentz--Finsler structures of $\R^{n+1}$
(every tangent space $T_x\R^{n+1}$ is canonically isometric to $T_0\R^{n+1}$
and we have $\Gamma^{\alpha}_{\beta\delta}=\gamma^{\alpha}_{\beta\delta}=0$).
We refer to \cite{FHPV,FP,FPP,GT,HPV} for some investigations on Berwald spacetimes,
and to \cite[Chapter~10]{BCS} for the positive-definite case.

\begin{remark}[Metrizability]\label{rm:met}
In the positive-definite case,
Szab\'o showed that a Berwald space $(M,F)$ admits a Riemannian metric $g$
whose Levi-Civita connection coincides with the Chern connection,
i.e., the Christoffel symbols of $g$ coincide with $\Gamma^i_{jk}$ of $F$
(see \cite{Sz} and \cite[Exercise~10.1.4]{BCS}).
This is called the \emph{metrizability theorem}.
It is not known whether the metrizability can be generalized to the Lorentz--Finsler setting.
In \cite{FHPV}, some counter-examples were constructed for Lorentz--Finsler structures
defined only on a subset of $TM$.
Their discussion is not applicable to Lorentz--Finsler structures defined on whole $TM$
as in Definition~\ref{df:LFstr}.
\end{remark}

The \emph{geodesic equation} for a causal curve $\eta:[0,1] \lra M$
is written as $D^{\dot{\eta}}_{\dot{\eta}}\dot{\eta} \equiv 0$.
The \emph{exponential map} is defined in the same way as the Riemannian case,
which is $C^{\infty}$ on a neighborhood of the zero section only in Berwald spacetimes
(see, e.g., \cite{Min-coord} as well as
\cite[Exercise~5.3.5]{BCS} in the positive-definite case).

Next we introduce the Ricci curvature.
A $C^{\infty}$-vector field $J$ along a geodesic $\eta$ is called a \emph{Jacobi field}
if it satisfies $D^{\dot{\eta}}_{\dot{\eta}} D^{\dot{\eta}}_{\dot{\eta}} J +R_{\dot{\eta}}(J) =0$,
where
\[ R_v(w):=\sum_{\alpha,\beta=0}^n R^{\alpha}_{\beta}(v) w^{\beta} \frac{\del}{\del x^{\alpha}} \]
for $v,w \in T_xM$ and
\[ R^{\alpha}_{\beta}(v) :=2\frac{\del G^{\alpha}}{\del x^{\beta}}(v)
 -\sum_{\delta=0}^n \bigg\{ \frac{\del N^{\alpha}_{\beta}}{\del x^{\delta}}(v) v^{\delta}
 -2\frac{\del N^{\alpha}_{\beta}}{\del v^{\delta}}(v) G^{\delta}(v) \bigg\}
 -\sum_{\delta=0}^n N^{\alpha}_{\delta}(v) N^{\delta}_{\beta}(v) \]
is the \emph{curvature tensor}.
A Jacobi field is also characterized as the variational vector field of a geodesic variation.

Note that $R_v(w)$ is linear in $w$, thereby $R_v:T_xM \lra T_xM$ is an endomorphism
for each $v \in T_xM$.
For $v \in \overline{\Omega}_x$, we define the \emph{Ricci curvature}
(or \emph{Ricci scalar}) of $v$ as the trace of $R_v$: $\Ric(v):=\trace(R_v)$.
We remark that $\Ric(cv)=c^2 \Ric(v)$ for $c>0$.


In order to introduce the spacetime Laplacian,
we consider the dual structure to $L$ and the Legendre transform
(see \cite{Min-cone}, \cite[\S 3.1]{Min-causality}, \cite[\S 4.4]{LMO2} for further discussions).
Define the \emph{polar cone} to $\Omega_x$ by
\[ \Omega^*_x :=\big\{ \omega \in T_x^*M \,|\,
 \omega(v)<0\ \text{for all}\ v \in \overline{\Omega}_x \setminus \{0\} \big\}. \]
This is an open convex cone in $T_x^*M$.
For $\omega \in \Omega^*_x$, we define
\[ L^*(\omega) := -\frac{1}{2}\bigg( \sup_{v \in \Omega_x \cap F^{-1}(1)} \omega(v) \bigg)^2
 =-\frac{1}{2}\inf_{v \in \Omega_x \cap F^{-1}(1)} \big( \omega(v) \big)^2. \]
Then we have the \emph{reverse Cauchy--Schwarz inequality}
$4L^*(\omega) L(v) \le (\omega(v))^2$
for $v \in \Omega_x$ and $\omega \in \Omega^*_x$,
and arrive at the following variational definition of the Legendre transform.

\begin{definition}[Legendre transform]\label{df:Leg}
Define the \emph{Legendre transform}
$\LL^*:\Omega^*_x \lra \Omega_x$ as the map sending $\omega \in \Omega^*_x$
to the unique element $v \in \Omega_x$ satisfying $L(v)=L^*(\omega)=\omega(v)/2$.
We also define $\LL^*(0):=0$.
\end{definition}

A coordinate expression of the Legendre transform is given by
\begin{equation}\label{eq:Leg}
\LL^*(\omega)
 =\sum_{\alpha=0}^n \frac{\del L^*}{\del \omega_{\alpha}}(\omega) \frac{\del}{\del x^{\alpha}},
 \qquad \text{where}\,\ \omega =\sum_{\alpha=0}^n \omega_{\alpha} \,\td x^{\alpha}.
\end{equation}
We refer to \cite{LMO2,Min-cone,Min-causality}
for some basic properties of the Legendre transforms.

A continuous function $f:M \lra \R$ is called a \emph{time function}
if $f(x)<f(y)$ for all $x,y \in M$ with $x<y$.
A $C^1$-function $f:M \lra \R$ is said to be \emph{temporal} if $-\td f(x) \in \Omega^*_x$
for all $x \in M$.
Observe that temporal functions are time functions.

For a temporal function $f:M \lra \R$,
define the \emph{gradient vector} of $-f$ at $x \in M$ by
\begin{equation}\label{eq:grad}
\Grad(-f)(x) :=\LL^* \big( {-\td f}(x) \big) \in \Omega_x.
\end{equation}
Note that, thanks to \eqref{eq:Leg}, we have
$g_{\Grad(-f)} (\Grad(-f)(x),v) =-\td f(v)$ for any $v \in T_xM$.
For a $C^2$-temporal function $f:M \lra \R$ and $x \in M$,
we define the \emph{Hessian} $\Grad^2 (-f):T_xM \lra T_xM$ by
\begin{equation}\label{eq:Hess}
\Grad^2(-f)(v) :=D^{\Grad(-f)}_v \big( \Grad(-f) \big).
\end{equation}
This spacetime Hessian has the following symmetry (see \cite[Lemma~4.12]{LMO2}):
\begin{equation}\label{eq:symm}
g_{\Grad(-f)}\big( \Grad^2 (-f)(v),w \big) =g_{\Grad(-f)}\big( v,\Grad^2(-f)(w) \big)
\end{equation}
for all $v,w \in T_xM$.
Then we define the spacetime \emph{Laplacian} (or \emph{d'Alembertian}) as the trace of the Hessian:
\begin{equation}\label{eq:Lap}
\Lap (-f):=\trace\big( \Grad^2(-f) \big).
\end{equation}
We remark that this Laplacian is not elliptic but hyperbolic,
and is nonlinear (since the Legendre transform is nonlinear).

\begin{remark}[Berwald case]\label{rm:Hess}
For $v \in T_xM$ and the geodesic $\eta:(-\ve,\ve) \lra M$ with $\dot{\eta}(0)=v$,
the second order derivative $(-f \circ \eta)''(0)$ does not coincide with
$g_{\Grad(-f)}(\Grad^2(-f)(v),v)$ in general.
They coincide in Berwald spacetimes thanks to the fiber-wise constancy of
the connection coefficients $\Gamma^{\alpha}_{\beta \delta}$
(see \cite[\S 12.1]{Obook} for the positive-definite case).
\end{remark}

\section{Weighted Finsler spacetimes}\label{sc:wLF}

Next, as a \emph{weight} on a Finsler spacetime $(M,L)$,
we employ a $C^{\infty}$-function $\Psi:M \lra \R$.
The following definition generalizes the weighted Ricci curvature for Finsler manifolds
introduced in \cite{Oint} (see also \cite{Oneg} for the case of $N<0$).

\begin{definition}[Weighted Ricci curvature]\label{df:wRic}
Given $v \in \overline{\Omega} \setminus 0$,
let $\eta:(-\ve,\ve) \lra M$ be the causal geodesic with $\dot{\eta}(0)=v$.
Then we define the \emph{weighted Ricci curvature} by
\[ \Ric_N(v) :=\Ric(v) +(\Psi \circ \eta)''(0) -\frac{(\Psi \circ\eta)'(0)^2}{N-n} \]
for $N \in \R \setminus \{n\}$.
We also define $\Ric_{\infty}(v) :=\Ric(v) +(\Psi \circ \eta)''(0)$,
$\Ric_n(v) :=\lim_{N \downarrow n} \Ric_N(v)$, and $\Ric_N(0):=0$.
\end{definition}

\begin{remark}\label{rm:N}
Since $\dim M=n+1$,
$\Ric_N$ in this paper corresponds to $\Ric_f^{N+1}$ in \cite{WW2}.
\end{remark}

\begin{remark}[Weight functions associated with measures]\label{rm:meas}
Instead of a function $\Psi$ on $M$ as above, one can alternatively employ a function
$\psi_{\fm}:\ol{\Omega} \setminus 0 \lra \R$ on causal vectors associated with a measure $\fm$ on $M$.
To be precise, $\psi_{\fm}$ is defined by
\[ \fm(\td x) =\e^{-\psi_{\fm} \circ \dot{\eta}} \sqrt{-\det\big[ \big( g_{\alpha\beta}(\dot{\eta}) \big) \big]}
 \,\td x^0 \td x^1 \cdots \td x^n \]
along causal geodesics $\eta$.
One way to unify these two cases is to consider a general $0$-homogeneous function
$\psi:\ol{\Omega} \setminus 0 \lra \R$ as in \cite{LMO,LMO2}
(we remark that, in general, such a function may not be associated with any measure;
see \cite{ORand,Obook} for the Finsler case).
However, since a weighted Laplacian corresponding to general $\psi$ is yet to be developed
(even for Riemannian manifolds), we restrict ourselves to $\Psi$ or $\psi_{\fm}$.
\end{remark}

We will say that $\Ric_N \ge K$ holds \emph{in timelike directions} for some $K \in \R$
if we have $\Ric_N(v) \ge KF^2(v) =-2KL(v)$ for all $v \in \Omega$ (recall \eqref{eq:LtoF}).
Observe that
\[ \Ric_n(v) \le \Ric_N(v) \le \Ric_{\infty}(v) \le \Ric_{N'}(v) \]
holds for $n<N<\infty$ and $-\infty<N'<n$.
Associated with the real parameter $N$, the following notion was introduced in \cite{LMO}.

\begin{definition}[$\ez$-range]\label{df:eran}
Given $N \in (-\infty,0] \cup [n,+\infty]$, we will consider $\ez \in \R$ in the following
\emph{$\ez$-range}:
\begin{equation}\label{eq:erange}
\epsilon=0 \,\text{ for } N=0, \qquad
 |\epsilon| < \sqrt{\frac{N}{N-n}} \,\text{ for } N \neq 0,n, \qquad
 \ez \in \R \,\text{ for } N=n.
\end{equation}
The associated constant $c =c(N,\ez)$ is defined as
\begin{equation}\label{eq:LF-c}
c(N,\ez):= \frac{1}{n}\bigg( 1-\ez^2\frac{N-n}{N} \bigg) >0 \,\text{ for } N \neq 0, \qquad
 c(0,0):= \frac{1}{n}.
\end{equation}
\end{definition}

Note that $\ez=1$ is admissible only for $N \in [n,+\infty)$ and $\ez=0$ is always admissible.
By the $\ez$-range, we can unify results for constant and variable curvature bounds
into a single framework.
See \cite{LMO} for singularity theorems and \cite{LMO2} for comparison theorems.

A Laplacian comparison theorem with $\ez$-range was established in \cite{LMO2} as follows.
Given $z \in M$ and any maximizing timelike geodesic $\eta:[0,T) \lra M$ with $\eta(0)=z$,
the function $u(x):=d(z,x)$ satisfies $-\td u(x) \in \Omega_x^*$ for $x \in \eta((0,T))$
and hence $\Lap(-u)(x)$ as in \eqref{eq:Lap} is well-defined.
We define the \emph{$\Psi$-Laplacian} (or the \emph{weighted Laplacian}) of $-u$ by
\[ \Lap\!^{\Psi}(-u)(x) :=\Lap(-u)(x) -\td\Psi \big( \Grad(-u)(x) \big). \]
Then the \emph{Laplacian comparison theorem} (\cite[Theorem~5.8]{LMO2})
for nonnegatively curved spacetimes asserts the following.

\begin{theorem}[Laplacian comparison theorem]\label{th:Lcomp}
Let $(M,L,\Psi)$ be a weighted Finsler spacetime of dimension $n+1 \ge 2$,
$N \in (-\infty,0] \cup [n,+\infty]$, and $\ez \in \R$ belong to the $\ez$-range \eqref{eq:erange}.
Suppose that $\Ric_N \ge 0$ holds in timelike directions.
Then, for any $z \in M$, the distance function $u(x):=d(z,x)$ satisfies
\begin{equation}\label{eq:Lcomp}
\Lap\!^{\Psi}(-u)\big( \eta(t) \big)
 \le \e^{\frac{2(\ez -1)}{n} \Psi(\eta(t))}
 \bigg( c\int_0^t \e^{\frac{2(\ez -1)}{n} \Psi(\eta(s))} \,\td s \bigg)^{-1}
\end{equation}
for any maximizing timelike geodesic $\eta:[0,T) \lra M$ of unit speed
$($i.e., $F(\dot{\eta})\equiv 1)$ emanating from $z$
and any $t \in (0,T)$, where $c$ is as in \eqref{eq:LF-c}.
\end{theorem}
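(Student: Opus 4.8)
The plan is to run a Raychaudhuri--Riccati comparison along $\eta$; the conclusion is then forced by an algebraic identity tailored to the $\ez$-range. First I would fix the maximizing, unit-speed timelike geodesic $\eta\colon[0,T)\lra M$ and note that, since it is maximizing, $u:=d(z,\cdot)$ is smooth on a neighborhood of $\eta((0,T))$, $-\td u\in\Omega^*$ there, and $\Grad(-u)(\eta(t))=\dot\eta(t)$; moreover $\Grad(-u)$ has geodesic integral curves, so evaluating $\Grad^2(-u)$ with reference vector $\dot\eta$ is the natural choice and the discrepancy flagged in Remark~\ref{rm:Hess} does not intervene. Writing $A(t):=\Grad^2(-u)|_{\eta(t)}$, the symmetry \eqref{eq:symm} makes $A(t)$ self-adjoint for $g_{\dot\eta}$, while $A(t)\dot\eta(t)=D^{\dot\eta}_{\dot\eta}\dot\eta=0$, so $A(t)$ restricts to the $g_{\dot\eta}$-orthogonal complement $V(t)$ of $\dot\eta(t)$, which is positive-definite of dimension $n$. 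Since the Jacobi fields along $\eta$ vanishing at $z$ span $V(t)$ and satisfy $D^{\dot\eta}_{\dot\eta}J=A(t)J$ together with the Jacobi equation, differentiating yields the matrix Riccati equation $A'+A^2+R_{\dot\eta}=0$ on $V$ (with $'$ the covariant derivative along $\eta$ with reference vector $\dot\eta$), exactly as in the Finsler Laplacian comparison of \cite{OShf}.

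Next I would take the $g_{\dot\eta}$-trace over $V$: using $\trace(A^2)\ge(\trace A)^2/n$ (valid because $(V,g_{\dot\eta})$ is positive-definite) and $\Ric(\dot\eta)=\trace(R_{\dot\eta})$ gives, for $\theta(t):=\Lap(-u)(\eta(t))=\trace A(t)$, the inequality $\theta'\le-\theta^2/n-\Ric(\dot\eta)$. Setting $\varphi:=\Psi\circ\eta$ and $h:=\theta-\varphi'=\Lap\!^{\Psi}(-u)\circ\eta$, and feeding in $\Ric_N(\dot\eta)\ge0$ in the form $\Ric(\dot\eta)+\varphi''\ge(\varphi')^2/(N-n)$ (with the evident limiting reading at $N=n,0,+\infty$), the whole assertion reduces to the pointwise Riccati inequality $h'\le\tfrac{2(\ez-1)}{n}\varphi'h-c\,h^2$ with $c=c(N,\ez)$. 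Substituting the curvature bound, this is in turn equivalent to the nonnegativity of the quadratic form
\[ \Big(\frac1n-c\Big)h^2+\frac{2\ez}{n}\,h\varphi'+\frac{N}{n(N-n)}(\varphi')^2 \]
in $(h,\varphi')$. By \eqref{eq:LF-c}, $\tfrac1n-c=\tfrac{\ez^2}{n}\cdot\tfrac{N-n}{N}$, so on the $\ez$-range \eqref{eq:erange} both outer coefficients are nonnegative and the discriminant is $\tfrac{4\ez^2}{n^2}-\tfrac{4\ez^2}{n^2}=0$; the form is therefore a perfect square, hence $\ge0$. This identity is the conceptual core of the proof, and the condition $c(N,\ez)>0$ built into \eqref{eq:erange} is exactly what makes the subsequent model comparison nondegenerate.

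It remains to integrate $h'\le\beta\varphi'h-c\,h^2$ with $\beta:=2(\ez-1)/n$. The claimed bound $\bar h(t):=e^{\beta\varphi(t)}\big(c\int_0^t e^{\beta\varphi(s)}\,\td s\big)^{-1}$ is the exact solution of $\bar h'=\beta\varphi'\bar h-c\bar h^2$. Writing $h=(\log y)'/c$ for a positive function $y$ on $(0,T)$ converts the Riccati inequality into the linear second-order inequality $y''\le\beta\varphi'y'$, i.e.\ $e^{-\beta\varphi}y'$ is non-increasing, whereas $e^{-\beta\varphi}\bar y'\equiv1$ for $\bar y(t):=\int_0^t e^{\beta\varphi(s)}\,\td s$. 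The local geometry near $z$ forces $h(t)\sim n/t$ as $t\downarrow0$, hence $y(t)\sim(\text{const})\,t^{\,cn}$ with $cn\le1$ and $y(0^+)=0$; then the Wronskian-type quantity $W:=y'\bar y-y\bar y'$ satisfies $W'\le\beta\varphi'W$ with $W(0^+)=0$, so $e^{-\beta\varphi}W$ is non-increasing and vanishes at the left endpoint, whence $W\le0$ on $(0,T)$. This is precisely $h=\tfrac{y'}{cy}\le\tfrac{\bar y'}{c\bar y}=\bar h$, which is \eqref{eq:Lcomp}.

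The main obstacle, in my view, is twofold. Establishing the Hessian Riccati equation for the Lorentz--Finsler distance function with the reference-vector conventions of Section~\ref{ssc:curv} (via the second variation formula and an index-form argument) is where the genuine differential-geometric work resides, although it runs parallel to \cite{OShf}. Secondly, the $t\downarrow0$ boundary analysis in the integration step is delicate in the borderline case $cn=1$ (that is, $\ez=0$ or $N=n$), where $h$ and $\bar h$ agree to leading order and one must control the next-order behavior; this is exactly why it is advantageous to reduce to the linear quantity $e^{-\beta\varphi}y'$ rather than attempt a direct Riccati comparison.
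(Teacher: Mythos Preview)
Your argument is correct and follows the same Raychaudhuri--Riccati strategy that underlies the result; note that the paper does not prove Theorem~\ref{th:Lcomp} here but cites \cite[Theorem~5.8]{LMO2}, and the relevant machinery is displayed in \eqref{eq:B_e}--\eqref{eq:Rayeq}. Your perfect-square identity for the quadratic form in $(h,\varphi')$ is precisely the content of the squared term $\tfrac{N(N-n)}{n}\big(\tfrac{\ez\theta_\ez}{N}+\tfrac{(\Psi\circ\zeta)^*}{N-n}\big)^2$ in the weighted Raychaudhuri equation \eqref{eq:Rayeq}, just unpacked at the level of $h$ instead of $\theta_\ez$.

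The one place where the packaging in \cite{LMO,LMO2} is more economical than yours is the integration step. Rather than the substitution $h=y'/(cy)$ and a Wronskian comparison, one works with $\theta_\ez:=\e^{-\beta\varphi}h$ and the reparametrized time $\tau:=\varphi_\eta(t)=\int_0^t \e^{\beta\varphi}$; your Riccati inequality then becomes simply $\td\theta_\ez/\td\tau\le -c\,\theta_\ez^2$, which integrates via $\td(1/\theta_\ez)/\td\tau\ge c$ and $1/\theta_\ez(0^+)=0$ to $\theta_\ez\le 1/(c\tau)$, i.e.\ \eqref{eq:Lcomp}. This absorbs the weight into the time variable and sidesteps the $cn=1$ boundary issue you flag. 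Incidentally, your own Wronskian argument is also fine in that borderline case: writing $W=y(c h\bar y-\bar y')$, one has $y(0^+)=0$ while $c h\bar y-\bar y'$ stays bounded (since $h\bar y\to n\,\e^{\beta\varphi(0)}$), so $W(0^+)=0$ without any next-order analysis.
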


Precisely, we considered a general $0$-homogeneous function
$\psi:\ol{\Omega} \setminus 0 \lra \R$ in \cite{LMO2} (recall Remark~\ref{rm:meas}).
Note also that \eqref{eq:Lcomp} is actually an intermediate estimate
in the proof of \cite[Theorem~5.8]{LMO2}.
We refer to \cite{Ca,WW2} for the weighted Lorentzian case.

We also explain the reverse version of Theorem~\ref{th:Lcomp} for thoroughness.
The \emph{reverse Lorentz--Finsler structure} of $L$ is defined as $\rev{L}(v):=L(-v)$,
and we put an arrow $\leftarrow$ on a quantity associated with $\rev{L}$.
The Lorentz--Finsler manifold $(M,\rev{L})$ is time oriented by $-X$,
so that $\rev{\Omega}=-\Omega$.
The corresponding weighted Laplacian is given as $\rev{\Lap}^{\Psi}f=-\Lap\!^{\Psi}(-f)$
for temporal functions $f$
(as for $\psi_{\fm}$, we have $\rev{\psi}_{\fm}(v)=\psi_{\fm}(-v)$).
We remark that, for the weighted Ricci curvature $\rev{\Ric}$ of $(M,\rev{L},\Psi)$,
we have $\rev{\Ric}_N(v)=\Ric_N(-v)$ for $v \in \rev{\Omega}$, and hence
the timelike curvature bound $\Ric_N \ge 0$ is equivalent to $\rev{\Ric}_N \ge 0$.

\begin{corollary}[Reverse version]\label{cr:Lcomp}
Let $(M,L,\Psi)$, $N$ and $\epsilon$ be as in Theorem~$\ref{th:Lcomp}$.
Then, for any $z \in M$, the function $\bar{u}(x):=d(x,z)$ satisfies
\begin{equation}\label{eq:Lcomp'}
\rev{\Lap}^{\Psi}(-\bar{u}) \big( \bar{\eta}(t) \big)
 =-\Lap\!^{\Psi} \bar{u} \big( \bar{\eta}(t) \big)
 \le \e^{\frac{2(\ez -1)}{n} \Psi({\bar{\eta}}(t))}
 \bigg( c\int_0^t \e^{\frac{2(\ez -1)}{n} \Psi(\bar{\eta}(s))} \,\td s \bigg)^{-1}
\end{equation}
for any maximizing timelike geodesic $\eta:(-T,0] \lra M$ of unit speed with $\eta(0)=z$
and any $t \in (0,T)$, where $\bar{\eta}(s):=\eta(-s)$.
\end{corollary}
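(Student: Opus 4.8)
The plan is to deduce Corollary~\ref{cr:Lcomp} directly from Theorem~\ref{th:Lcomp} by applying the latter to the reverse structure $\rev{L}$. The key point is the \emph{time-duality} between $(M,L)$ and $(M,\rev{L})$: under the identification $\rev{\Omega}=-\Omega$, a future-directed timelike curve for $\rev{L}$ is precisely the time-reversal of a future-directed timelike curve for $L$, and consequently $\rev{d}(x,y)=d(y,x)$ for all $x,y\in M$. Moreover the weight $\Psi$ is unchanged, the relation $\rev{\Ric}_N(v)=\Ric_N(-v)$ holds (as recalled in the paragraph preceding the statement), so the hypothesis $\Ric_N\ge 0$ in timelike directions for $(M,L,\Psi)$ passes verbatim to $(M,\rev{L},\Psi)$. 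Thus Theorem~\ref{th:Lcomp} is applicable to $(M,\rev{L},\Psi)$ with the same $N$ and the same $\ez$ in the same $\ez$-range, and the constant $c=c(N,\ez)$ in \eqref{eq:LF-c} is identical.

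The concrete steps are as follows. First I would fix $z\in M$ and a maximizing timelike geodesic $\eta:(-T,0]\lra M$ of unit speed with $\eta(0)=z$ for $(M,L)$, and set $\bar{\eta}(s):=\eta(-s)$ for $s\in[0,T)$. Then $\bar{\eta}:[0,T)\lra M$ is a unit-speed timelike geodesic for $\rev{L}$ with $\bar{\eta}(0)=z$, and it is maximizing for $\rev{L}$ because $\rev{d}(z,\bar{\eta}(t))=d(\bar{\eta}(t),z)=d(\eta(-t),z)=\ell(\eta|_{[-t,0]})$, the last equality holding since $\eta$ is maximizing. Next, observe that the $\rev{L}$-distance function from $z$ is $\rev{u}(x):=\rev{d}(z,x)=d(x,z)=\bar{u}(x)$, so applying Theorem~\ref{th:Lcomp} to $(M,\rev{L},\Psi)$ with base point $z$, geodesic $\bar{\eta}$, and $u$ replaced by $\rev{u}=\bar{u}$ yields
\[
\rev{\Lap}^{\Psi}(-\bar{u})\big(\bar{\eta}(t)\big)
 \le \e^{\frac{2(\ez-1)}{n}\Psi(\bar{\eta}(t))}
 \bigg( c\int_0^t \e^{\frac{2(\ez-1)}{n}\Psi(\bar{\eta}(s))}\,\td s \bigg)^{-1}
\]
for $t\in(0,T)$, which is exactly the right-hand inequality of \eqref{eq:Lcomp'}. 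Finally, the middle equality $\rev{\Lap}^{\Psi}(-\bar{u})=-\Lap\!^{\Psi}\bar{u}$ is the defining relation $\rev{\Lap}^{\Psi}f=-\Lap\!^{\Psi}(-f)$ (applied with $f=-\bar{u}$) recalled in the paragraph before the statement; one also needs $-\td\bar{u}(\bar{\eta}(t))\in\rev{\Omega}^{*}_{\bar{\eta}(t)}$ so that all the operators in question are defined at those points, which follows from the already-cited fact that distance functions along maximizing timelike geodesics are temporal there, now applied in $(M,\rev{L})$.

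I do not expect a genuine obstacle here: the corollary is a formal consequence of Theorem~\ref{th:Lcomp} together with the time-duality dictionary between $L$ and $\rev{L}$ that the excerpt has already set up. The only points requiring a little care are bookkeeping ones: checking that ``maximizing'' and ``unit speed'' are preserved under time-reversal, that the $\ez$-range and the constant $c$ are symmetric under $L\mapsto\rev{L}$ (they are, since $N$ and $n$ are unchanged), and that $\Grad(-\bar{u})$ computed with respect to $\rev{L}$ agrees with what appears in $\rev{\Lap}^{\Psi}$ — all of which are immediate from the definitions in Subsections~\ref{ssc:Lap} and from the reverse-structure conventions stated just above the corollary. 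Hence the proof is essentially a one-line invocation of Theorem~\ref{th:Lcomp} for $(M,\rev{L},\Psi)$, followed by translating the conclusion back through the identities $\rev{d}(z,\cdot)=d(\cdot,z)$ and $\rev{\Lap}^{\Psi}(-f)=-\Lap\!^{\Psi}f$.
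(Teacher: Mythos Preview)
Your proposal is correct and matches the paper's approach exactly: the paper does not give a formal proof of Corollary~\ref{cr:Lcomp} but instead surrounds it with the time-duality dictionary (the paragraph before, recalling $\rev{\Ric}_N(v)=\Ric_N(-v)$ and $\rev{\Lap}^{\Psi}f=-\Lap\!^{\Psi}(-f)$) and the one-line observation after (``$\eta$ is a maximizing timelike geodesic of unit speed with respect to $L$ and so is $\bar{\eta}$ with respect to $\rev{L}$''), which together amount precisely to your argument of applying Theorem~\ref{th:Lcomp} to $(M,\rev{L},\Psi)$ with $\rev{u}=\bar{u}$.
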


Note that $\eta$ is a maximizing timelike geodesic of unit speed with respect to $L$
and so is $\bar{\eta}$ with respect to $\rev{L}$.
In the application of the Laplacian comparison theorem,
we need to require that the right-hand side of \eqref{eq:Lcomp} (and \eqref{eq:Lcomp'})
diverges to infinity.
To be precise, we will assume the following completeness condition introduced in \cite{LMO}.

\begin{definition}[$\ez$-completeness]\label{df:cmplt}
A weighted Finsler spacetime $(M,L,\Psi)$ is said to be
\emph{future timelike $\ez$-complete} if,
for any future inextendible timelike geodesic $\eta:[0,T) \lra M$,
\[ \lim_{t \to T} \int_0^t \e^{\frac{2(\ez -1)}{n} \Psi(\eta(s))} \,\td s =\infty \]
holds.
We say that $(M,L,\Psi)$ is \emph{timelike $\ez$-complete}
if both $(M,L,\Psi)$ and $(M,\rev{L},\Psi)$ are future timelike $\ez$-complete.
\end{definition}

The $1$-completeness corresponds to the usual geodesic completeness
(see Definition~\ref{df:gcmplt} below)
and the $0$-completeness recovers the $\Psi$-completeness introduced in \cite{Wy}
in the Riemannian case (see \cite{WW2} for the Lorentzian case).
A typical situation is that $\ez<1$, $\Psi$ is bounded above and
$(M,L)$ is (future) timelike geodesically complete.

\section{Rays and generalized co-rays}\label{sc:ray}

In this and the next sections, we shall follow the argument in \cite{GH}
to study rays and associated Busemann functions (see also \cite[Chapter~14]{BEE}).
We first recall the geodesic completeness conditions.
See \cite{Ga-cplt} for some connections between the global hyperbolicity
and the geodesic completeness.

\begin{definition}[Geodesic completeness]\label{df:gcmplt}
A Finsler spacetime $(M,L)$ is said to be \emph{future timelike geodesically complete}
if any timelike geodesic $\eta:[0,1] \lra M$ can be extended to a geodesic
$\tilde{\eta}:[0,\infty) \lra M$.
We say that $(M,L)$ is \emph{timelike geodesically complete}
if both $(M,L)$ and $(M,\rev{L})$ are future timelike geodesically complete
(in other words, the above $\eta$ is extended to a geodesic $\tilde{\eta}:\R \lra M$).
\end{definition}

A future inextendible causal geodesic $\eta:[0,\infty) \lra M$ is called a \emph{ray}
if each of its segments is maximizing, i.e.,
$\ell(\eta|_{[a,b]})=d(\eta(a),\eta(b))$ for all $0\le a\le b$.
%
For later convenience, we define
\begin{equation}\label{eq:I(eta)}
I^-(\eta) := \bigcup_{t>0} I^-\big( \eta(t) \big), \qquad
I(\eta) :=I^+\big( \eta(0) \big) \cap I^-(\eta).
\end{equation}
Note that $I(\eta)$ is an open set including $\eta((0,\infty))$.
Observe also that, for $x,y \in I(\eta)$ with $x \le y$,
we have $d(x,y)<\infty$ since the reverse triangle inequality yields
\begin{equation}\label{eq:GH2-4}
d\big( \eta(0),x \big) +d(x,y) +d\big( y,\eta(t) \big) \le d\big( \eta(0),\eta(t) \big)
\end{equation}
for sufficiently large $t$.

Let $(M,L)$ be future timelike geodesically complete in the remainder of the section.
Recall that, without the global hyperbolicity, connecting maximizing curves may not exist.
Thus the following notion will play a role.

\begin{definition}[Limit maximizing sequences]\label{df:limmax}
A sequence $\eta_k:[a_k,b_k] \lra M$ of causal curves
is said to be \emph{limit maximizing} if
$\ell(\eta_k) \ge d(\eta_k(a_k), \eta_k(b_k))-\delta_k$ holds for some $\delta_k \to 0$.
\end{definition}

Observe that, if $x<y$ and $d(x,y)<\infty$, then there exists a limit maximizing
sequence of causal curves from $x$ to $y$ by the definition of $d$.

We make use of the \emph{limit curve theorems} of Lorentzian geometry
in both the ``one endpoint'' and ``two endpoints'' cases
(see \cite{BEE,EG,Ga-curv,GH} and more recent \cite{Min-lct,Min-Rev}).
As observed in \cite{Min-Ray}, these limit curve theorems generalize to the Lorentz--Finsler setting
thanks to the existence of convex neighborhoods (see also \cite{Min-causality}
for the Finslerian statements of these results under weak differentiability conditions).

What is relevant for us is the fact that, due to the lower semi-continuity of $d$
and the upper semi-continuity of the length $\ell$
(with respect to the uniform convergence on compact intervals),
any limit maximizing sequence of causal curves admits a subsequence converging,
in a suitable parametrization, to a maximizing geodesic.
For instance, we have the following analogue of \cite[Proposition~2.3]{GH}.

\begin{lemma}\label{lm:GH2.3}
Let $\eta_k:[a_k,b_k] \lra M$ be a limit maximizing sequence of causal curves
converging to $\eta:[a,b] \lra M$ uniformly on some interval $[a,b] \subset \bigcap_k [a_k,b_k]$.
Then we have $\ell(\eta)=d(\eta(a),\eta(b))$, and hence $\eta$ is a maximizing curve
from $\eta(a)$ to $\eta(b)$.
\end{lemma}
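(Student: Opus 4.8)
The plan is to follow the pattern familiar from Lorentzian geometry (as in \cite{GH}), exploiting the semi-continuity properties that have just been recalled, so the argument is short. First I would fix the convergence setup: since $\eta_k \to \eta$ uniformly on $[a,b] \subset \bigcap_k [a_k,b_k]$ and each $\eta_k$ is a causal curve, the limit $\eta$ is itself a causal curve on $[a,b]$ (this uses closedness of $\overline{\Omega}$ and the fact that the tangent directions of the $\eta_k$ restricted to $[a,b]$ stay in $\overline{\Omega}$; strictly speaking one passes to a $C^1$ or Lipschitz limit and reparametrizes, but this is standard and available from the limit curve machinery cited just above). In particular $\eta(a) \le \eta(b)$, so $d(\eta(a),\eta(b))$ and $\ell(\eta)$ are both meaningful, and $\ell(\eta) \le d(\eta(a),\eta(b))$ is automatic from the definition of $d$ as a supremum of lengths over causal curves.

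The substance is the reverse inequality $\ell(\eta) \ge d(\eta(a),\eta(b))$. Here I would combine three facts. First, limit-maximality of the sequence gives $\ell(\eta_k) \ge d(\eta_k(a_k),\eta_k(b_k)) - \delta_k$ with $\delta_k \to 0$. Second, restricting to the subinterval $[a,b]$ and using that each $\eta_k$ is a causal curve, the reverse triangle inequality yields
\[
d\big( \eta_k(a_k),\eta_k(a) \big) + d\big( \eta_k(a),\eta_k(b) \big) + d\big( \eta_k(b),\eta_k(b_k) \big) \le d\big( \eta_k(a_k),\eta_k(b_k) \big),
\]
and since $d\big(\eta_k(a),\eta_k(b)\big) \ge \ell\big(\eta_k|_{[a,b]}\big)$ while the other two summands on the left are $\ge \ell\big(\eta_k|_{[a_k,a]}\big)$ and $\ge \ell\big(\eta_k|_{[b,b_k]}\big)$ respectively, additivity of $\ell$ forces $\ell\big(\eta_k|_{[a,b]}\big) \ge d\big(\eta_k(a),\eta_k(b)\big) - \delta_k$; that is, the restricted curves $\eta_k|_{[a,b]}$ form a limit maximizing sequence in their own right. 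Third, upper semi-continuity of $\ell$ under uniform convergence on $[a,b]$ gives $\limsup_k \ell\big(\eta_k|_{[a,b]}\big) \le \ell(\eta)$, while lower semi-continuity of $d$ gives $\liminf_k d\big(\eta_k(a),\eta_k(b)\big) \ge d\big(\eta(a),\eta(b)\big)$ (using $\eta_k(a) \to \eta(a)$, $\eta_k(b) \to \eta(b)$). Chaining these, $\ell(\eta) \ge \limsup_k \ell\big(\eta_k|_{[a,b]}\big) \ge \limsup_k \big( d(\eta_k(a),\eta_k(b)) - \delta_k \big) \ge d\big( \eta(a),\eta(b) \big)$, so equality holds throughout.

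Finally, $\ell(\eta) = d(\eta(a),\eta(b))$ means $\eta$ realizes the Lorentz--Finsler distance between its endpoints; by the standard fact that a maximizing causal curve, once suitably (affinely) reparametrized, is a causal geodesic with no intermediate cut or conjugate behavior obstructing maximality, $\eta$ is a maximizing geodesic from $\eta(a)$ to $\eta(b)$. This last reparametrization-and-regularity step is the only place where one invokes nontrivial structure (existence of convex neighborhoods, regularity of maximizers), but all of it is already available in the references cited in the paragraph preceding the lemma.

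I expect the main obstacle — really the only delicate point — to be justifying that the uniform limit $\eta$ is genuinely a causal curve of the same regularity class and that $\ell$ is upper semi-continuous at $\eta$; in the Lorentzian case this is classical, and in the Lorentz--Finsler case it has been established in the limit curve theorems of \cite{Min-Ray,Min-lct,Min-Rev} cited above, so I would simply quote those rather than reprove them. Everything else is the reverse triangle inequality plus semi-continuity bookkeeping.
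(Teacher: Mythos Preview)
Your proposal is correct and matches the paper's approach: the paper does not give a detailed proof of this lemma but simply records it as a direct consequence of the lower semi-continuity of $d$ and the upper semi-continuity of $\ell$ under uniform convergence, citing \cite[Proposition~2.3]{GH} for the details. Your argument is precisely the expected unpacking of that citation, including the reverse-triangle step showing that the restrictions $\eta_k|_{[a,b]}$ are themselves limit maximizing.
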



The next lemma is an application of the limit curve theorem
(see also \cite[Lemma~2.4]{GH}).

\begin{lemma}\label{lm:GH2.4}
Let a sequence $(x_k)_{k \in \N}$ in $M$ converge to $x$
and take $y_k \in I^+(x_k)$ such that $d(x_k,y_k)<\infty$.
Let $\eta_k:[0,a_k] \lra M$ be a limit maximizing sequence of
causal curves with $\eta_k(0)=x_k$ and $\eta_k(a_k)=y_k$.
If $d(x_k,y_k) \to \infty$, then there exists a subsequence of $(\eta_k)_{k \in \N}$
with suitable reparametrizations converging uniformly on compact intervals
to a ray $\eta:[0,\infty) \lra M$ emanating from $x$.
\end{lemma}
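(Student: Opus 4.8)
The plan is to combine the one-endpoint limit curve theorem with Lemma~\ref{lm:GH2.3}. First I would normalize each $\eta_k$ to unit $F$-speed on $[0,\ell_k]$ where $\ell_k := \ell(\eta_k)$; since the $\eta_k$ form a limit maximizing sequence, $\ell_k \ge d(x_k,y_k)-\delta_k \to \infty$, so the parameter intervals $[0,\ell_k]$ eventually exhaust $[0,\infty)$. The initial data $(\eta_k(0),\dot\eta_k(0)) = (x_k, v_k)$ with $F(v_k)=1$ lie in a compact subset of the unit causal bundle once $x_k \to x$ is fixed (passing to the compact sphere bundle over a relatively compact neighborhood of $x$); so after extracting a subsequence we may assume $x_k \to x$ and $v_k \to v \in \overline\Omega_x$ with $F(v)=1$. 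By the one-endpoint limit curve theorem for Lorentz--Finsler manifolds (available here via the existence of convex neighborhoods, as recalled before Lemma~\ref{lm:GH2.3}), a further subsequence of the reparametrized $\eta_k$ converges uniformly on compact intervals to the inextendible causal geodesic $\eta:[0,\infty)\to M$ with $\dot\eta(0)=v$; here future timelike geodesic completeness is what guarantees that this limit geodesic is defined on all of $[0,\infty)$.

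It remains to see that $\eta$ is a ray, i.e.\ that every segment $\eta|_{[a,b]}$ is maximizing. Fix $0 \le a \le b$. For $k$ large enough that $b \le \ell_k$, the restriction $\eta_k|_{[a,b]}$ is again a limit maximizing sequence of causal curves: indeed $\ell(\eta_k|_{[a,b]}) = b-a$ (unit speed), while maximality of $\eta_k$ between its endpoints together with the reverse triangle inequality forces $d(\eta_k(a),\eta_k(b)) \le \ell(\eta_k|_{[a,b]}) + (\text{defect terms that go to }0)$ — concretely, $\ell(\eta_k) \ge d(x_k,y_k)-\delta_k$ and $d(x_k,y_k) \ge d(x_k,\eta_k(a)) + d(\eta_k(a),\eta_k(b)) + d(\eta_k(b),y_k) \ge \ell(\eta_k|_{[0,a]}) + d(\eta_k(a),\eta_k(b)) + \ell(\eta_k|_{[b,\ell_k]})$, and comparing with $\ell(\eta_k) = \ell(\eta_k|_{[0,a]}) + (b-a) + \ell(\eta_k|_{[b,\ell_k]})$ yields $d(\eta_k(a),\eta_k(b)) \ge (b-a) - \delta_k$. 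Since $\eta_k|_{[a,b]} \to \eta|_{[a,b]}$ uniformly, Lemma~\ref{lm:GH2.3} gives $\ell(\eta|_{[a,b]}) = d(\eta(a),\eta(b))$, so $\eta|_{[a,b]}$ is maximizing. As $a,b$ were arbitrary, $\eta$ is a ray emanating from $x$.

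The step I expect to be the main obstacle is the compactness input feeding the limit curve theorem: one must be careful that the base points $x_k$, though convergent, together with the unit causal directions $v_k$ stay in a genuinely compact set, and that the limit direction $v$ is still causal and future-directed (not degenerating to the zero vector or crossing into a different cone component). In the Lorentzian case this is routine; in the Lorentz--Finsler case it rests on the fact (cited in the excerpt, e.g.\ via \cite{Min-Ray,Min-causality}) that the future causal cone bundle is closed and the limit curve theorems transfer using convex neighborhoods. Once that is granted, everything else is a bookkeeping exercise with the reverse triangle inequality and Lemma~\ref{lm:GH2.3}.
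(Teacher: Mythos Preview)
Your overall strategy---apply the one-endpoint limit curve theorem, then use Lemma~\ref{lm:GH2.3} on compact subintervals to show the limit is maximizing---is exactly what the paper indicates (it gives no proof, simply citing the limit curve theorem and \cite[Lemma~2.4]{GH}), and your second paragraph is correct. But the first paragraph has a genuine error, precisely at the step you flag as the main obstacle. The set of future-directed causal vectors with $F(v)=1$ over a compact base is \emph{not} compact: in each tangent space it is the future unit hyperboloid $\{v \in \Omega_x : g_v(v,v)=-1\}$, which is unbounded. So you cannot extract a convergent subsequence of the $v_k$ this way. Moreover, the $\eta_k$ are arbitrary causal curves, not geodesics: their initial velocities do not determine them, and they may have null segments on which $F(\dot\eta_k)=0$, so unit-$F$-speed reparametrization need not even be possible.

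The standard fix---and the actual content of the limit curve theorems in \cite{Min-lct,Min-causality,Min-Rev}---is to parametrize by arc length of an auxiliary complete Riemannian metric $h$. The curves are then $1$-Lipschitz, and since $F(w) \le C_K |w|_h$ for causal $w$ over any compact $K$, the hypothesis $\ell(\eta_k) \to \infty$ forces the $h$-lengths to diverge; Arzel\`a--Ascoli then yields a future-inextendible causal limit curve $\eta:[0,\infty) \to M$. Note this step does \emph{not} use future timelike geodesic completeness---which is just as well, since as the paper remarks immediately after the lemma, the limit ray may be lightlike. With this corrected first step, your reverse-triangle-inequality argument and Lemma~\ref{lm:GH2.3} finish the proof unchanged.
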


We remark that the above ray $\eta$ may not be timelike.
This fact is at the origin of several complications in the proof of the splitting theorem.

Rays asymptotic to a given ray, called co-rays, play an important role in splitting theorems.
In the non-globally hyperbolic case, due to the possible absence of maximizing curves,
we need to consider also generalized co-rays.
They are defined as follows
(co-rays and generalized co-rays were introduced in \cite{BEMG} and \cite{EG}, respectively).

\begin{definition}[Generalized co-rays]\label{df:coray}
Let $\eta:[0,\infty) \lra M$ be a timelike ray and take $x \in I(\eta)$.
If a limit maximizing sequence of causal curves $\zeta_k:[0,a_k] \lra M$ satisfies
$\zeta_k(0) \to x$ and $\zeta_k(a_k)=\eta(t_k)$ for some $t_k>0$ with $t_k \to \infty$,
then its limit curve $\zeta:[0,\infty) \lra M$ is called a \emph{generalized co-ray} of $\eta$.
If each $\zeta_k$ is maximizing, then we call $\zeta$ a \emph{co-ray} of $\eta$.
A co-ray $\zeta$ with $\zeta_k(0)=x$ for all $k$ is called an \emph{asymptote} of $\eta$.
\end{definition}

Lemma~\ref{lm:GH2.4} ensures that the limit curve $\zeta$ is indeed a ray.
As another instance of the limit curve theorem,
we have the following analogue of \cite[Proposition~3.1]{GH}.

\begin{proposition}\label{pr:GH3.1}
Let $x,y \in M$ satisfy $y \in I^+(x)$ and $d(x,y)<\infty$,
and let $\eta_k:[0,a_k] \lra M$ be a limit maximizing sequence of causal curves
from $x$ to $y$.
Then $(\eta_k)_{k \in \N}$ admits a suitably reparametrized subsequence
converging uniformly on compact intervals
to either a lightlike ray or a maximizing curve from $x$ to $y$.
\end{proposition}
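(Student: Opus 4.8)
The plan is to apply the limit curve theorem in its ``two endpoints'' version to the limit maximizing sequence $\eta_k:[0,a_k] \lra M$ from $x$ to $y$. First I would normalize: since the $\eta_k$ are causal curves from $x$ to $y$ with $y \in I^+(x)$, and we may reparametrize them (say by $h$-arclength for some auxiliary complete Riemannian metric $h$ on $M$, as is standard in the limit curve machinery), we obtain a sequence with uniformly controlled parametrization whose endpoints $\eta_k(0)=x$ and $\eta_k(a_k)=y$ are fixed. The existence of convex neighborhoods in the Lorentz--Finsler setting (cf.\ \cite{Min-Ray,Min-causality}) guarantees that the limit curve theorem applies here exactly as in the Lorentzian case, so a subsequence converges uniformly on compact intervals to a causal limit curve $\zeta$. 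The two candidate behaviors arise according to whether the $h$-lengths $a_k$ stay bounded or not.

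Next I would split into the two cases. If $a_k$ remains bounded along a subsequence, then after passing to a further subsequence $a_k \to a<\infty$, the limit curve $\zeta:[0,a] \lra M$ runs from $x$ to $y$; since $(\eta_k)$ is limit maximizing, Lemma~\ref{lm:GH2.3} (applied on $[0,a]$, after the routine bookkeeping that the reparametrized $\eta_k$ still form a limit maximizing sequence and still converge uniformly on $[0,a]$) shows $\ell(\zeta)=d(x,y)$, i.e.\ $\zeta$ is a maximizing causal curve from $x$ to $y$, hence a maximizing geodesic. If instead $a_k \to \infty$, the limit curve theorem produces an inextendible causal limit curve $\zeta:[0,\infty) \lra M$ emanating from $x$. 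One then argues, just as in \cite[Proposition~3.1]{GH}, that each segment $\zeta|_{[0,b]}$ is a limit (uniformly on $[0,b]$) of initial segments of the $\eta_k$, which are themselves limit maximizing between their endpoints; by Lemma~\ref{lm:GH2.3} this forces $\ell(\zeta|_{[0,b]})=d(\zeta(0),\zeta(b))$ for every $b$, so $\zeta$ is a ray. Finally, this ray cannot be timelike: if it were, its future endpoint direction would allow one to strictly lengthen the approximating curves $\eta_k$ past $y$ while keeping them causal and still connecting within $I(\eta)$-type neighborhoods, contradicting $d(x,y)<\infty$ together with the reverse triangle inequality (this is precisely the mechanism behind \eqref{eq:GH2-4}); hence $\zeta$ is a lightlike ray.

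The main obstacle is the $a_k \to \infty$ case and specifically the exclusion of a timelike ray, since in the non-globally-hyperbolic Lorentz--Finsler setting one must be careful that the limit curve genuinely ``escapes'' rather than reconverging to $y$, and that the maximizing property passes to all finite segments despite the lower semi-continuity (not continuity) of $d$ and the absence of connecting maximizers. This is handled by invoking the limit curve theorem in the form stated in \cite{Min-Ray} together with Lemma~\ref{lm:GH2.3}, so no new analytic input beyond what is already assembled in this section is required; the argument is a direct transcription of \cite{GH} once the limit curve apparatus is in place.
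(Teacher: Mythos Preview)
Your overall strategy matches the paper's: the paper gives no proof and simply records this as an instance of the limit curve theorem, referring to \cite[Proposition~3.1]{GH}, and your sketch follows exactly that route (two-endpoint limit curve theorem, the dichotomy on the $h$-arclengths $a_k$, and Lemma~\ref{lm:GH2.3} to pass the maximizing property to the limit). The bounded case and the verification that the limit curve in the unbounded case is a ray are fine.

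The gap is in your exclusion of a timelike limit ray. The sentence about ``lengthening the approximating curves $\eta_k$ past $y$'' does not describe a valid mechanism (the $\eta_k$ terminate at $y$ and there is nothing to extend), and the reference to \eqref{eq:GH2-4} is to a different situation. The correct argument uses the standing hypothesis of future timelike geodesic completeness, which you do not invoke: if $\zeta$ were timelike, then as a future-inextendible timelike geodesic it would have infinite Lorentzian length. On the other hand, for each $s$ one has $x \le \eta_k(s) \le y$, so the reverse triangle inequality gives $d(x,\eta_k(s)) \le d(x,y)$; by the lower semi-continuity of $d$ and $\eta_k(s) \to \zeta(s)$ this yields $\ell(\zeta|_{[0,s]})=d(x,\zeta(s)) \le d(x,y)<\infty$ for every $s$, contradicting $\ell(\zeta)=\infty$. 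You should replace your paragraph on this point with that argument.
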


To avoid the convergence to a lightlike ray, we introduce the following condition.

\begin{definition}[Generalized timelike co-ray condition (GTCC)]\label{df:GTCC}
Let $\eta$ be a timelike ray and $x \in I(\eta)$.
We say that the \emph{generalized timelike co-ray condition}
(\emph{GTCC} for short) for $\eta$ holds at $x$
if any generalized co-ray of $\eta$ emanating from $x$ is timelike.
\end{definition}

Under the GTCC, we can prove the following existence result of maximizing timelike geodesics
(by using Proposition~\ref{pr:GH3.1} and Lemma~\ref{lm:GH2.4}).

\begin{lemma}\label{lm:GH3.3}
Let $\eta:[0,\infty) \lra M$ be a timelike ray and suppose that the GTCC for $\eta$
holds at some $x \in I(\eta)$.
Then there exist a neighborhood $U$ of $x$ and $T>0$ such that,
for any $z \in U$ and $t>T$, there exists a maximizing timelike geodesic from $z$ to $\eta(t)$.
\end{lemma}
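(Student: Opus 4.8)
The plan is to establish Lemma~\ref{lm:GH3.3} by combining the GTCC with the limit curve machinery (Lemma~\ref{lm:GH2.4} and Proposition~\ref{pr:GH3.1}) and arguing by contradiction. First I would fix $x \in I(\eta)$ at which the GTCC holds. Since $I(\eta)$ is open and $\eta((0,\infty)) \subset I(\eta)$, for each sufficiently large $t$ we have $x \ll \eta(t)$ and, by \eqref{eq:GH2-4}, $d(x,\eta(t))<\infty$; moreover $d(x,\eta(t)) \to \infty$ because $\eta$ is a ray and the reverse triangle inequality forces $d(x,\eta(t)) \ge d(\eta(0),\eta(t)) - d(\eta(0),x) - d(x,\eta(t_0))$ along the ray. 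Thus for $t$ large there is a limit maximizing sequence of causal curves from $x$ to $\eta(t)$, and by Proposition~\ref{pr:GH3.1} each such sequence subconverges either to a maximizing causal curve from $x$ to $\eta(t)$ or to a lightlike ray from $x$. The GTCC rules out the lightlike ray (a lightlike ray obtained as such a limit would be a generalized co-ray of $\eta$ at $x$, hence timelike — a contradiction), so a maximizing causal curve from $x$ to $\eta(t)$ exists; since its endpoints satisfy $x \ll \eta(t)$, this maximizing curve is in fact a timelike geodesic. This gives the conclusion at the single point $x$.

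The substance of the lemma is the \emph{uniformity}: the same $T$ and a whole neighborhood $U$ of $x$ work simultaneously. Here I would argue by contradiction. Suppose no such $U$ and $T$ exist. Then there are sequences $z_k \to x$ and $t_k \to \infty$ such that no maximizing timelike geodesic runs from $z_k$ to $\eta(t_k)$. For each $k$, since $z_k \in I(\eta)$ for $k$ large (as $I(\eta)$ is open) and $d(z_k,\eta(t_k))<\infty$ by \eqref{eq:GH2-4}, take a limit maximizing sequence of causal curves from $z_k$ to $\eta(t_k)$; a diagonal argument produces a single limit maximizing sequence $\zeta_j:[0,a_j] \lra M$ with $\zeta_j(0) \to x$ and $\zeta_j(a_j)=\eta(s_j)$, $s_j \to \infty$, whose members are maximizing (or nearly so) but for which no member from $z_k$ to $\eta(t_k)$ is a timelike geodesic. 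By Lemma~\ref{lm:GH2.4} (with $d(z_k,\eta(t_k)) \to \infty$, which again follows from $\eta$ being a ray), a reparametrized subsequence converges to a ray $\zeta:[0,\infty) \lra M$ emanating from $x$. This $\zeta$ is a generalized co-ray of $\eta$ at $x$ in the sense of Definition~\ref{df:coray}, so the GTCC forces $\zeta$ to be timelike, in particular $\dot{\zeta}(0)$ is timelike and $L(\dot\zeta(0))<0$.

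The final step is to convert the timelikeness of the limit ray $\zeta$ into the existence of genuine maximizing timelike geodesics from the nearby points $z_k$ to $\eta(t_k)$, contradicting the choice of the $z_k$. Since $\dot{\zeta}_j(0) \to \dot{\zeta}(0)$ in a suitable reparametrization and $\dot\zeta(0) \in \Omega_x$ is timelike, the openness of $\Omega'$ in $TM$ gives that $\dot\zeta_j(0)$ is timelike for all large $j$, hence the approximating maximizing curves $\zeta_j$ are timelike geodesics near their initial points; one then upgrades this to timelikeness along the whole segment from $z_k$ to $\eta(t_k)$ using Lemma~\ref{lm:GH2.3} (the uniform limit of a limit maximizing sequence is maximizing, and a maximizing causal curve with a timelike initial segment from a point $z_k$ that is in $I(\eta)$ and chronologically precedes $\eta(t_k)$ must be timelike throughout). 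This produces, for all large $k$, a maximizing timelike geodesic from $z_k$ to $\eta(t_k)$, the desired contradiction; taking $U$ to be a small neighborhood of $x$ contained in $I(\eta)$ on which these arguments apply, and $T$ large, completes the proof.

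The main obstacle I anticipate is the bookkeeping in the uniformity argument — specifically, carefully extracting a \emph{single} limit maximizing sequence out of the doubly-indexed family attached to the putative bad pairs $(z_k,\eta(t_k))$ via a diagonal procedure, and then verifying that its limit genuinely qualifies as a generalized co-ray at $x$ (the base point condition $\zeta_j(0)\to x$ and the endpoint condition $\zeta_j(a_j)=\eta(s_j)$ with $s_j \to \infty$ must be arranged simultaneously). A secondary technical point is ensuring that timelikeness of $\dot\zeta(0)$ propagates to timelikeness of the whole approximating geodesic segments, rather than merely near the origin; this relies on the fact that a maximizing causal geodesic in a spacetime cannot switch between timelike and lightlike character, so a single timelike tangent vector forces the segment to be timelike.
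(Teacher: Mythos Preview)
Your overall strategy---argue by contradiction, build a generalized co-ray from a limit maximizing sequence with base points $z_k \to x$ and endpoints $\eta(t_k)$, $t_k \to \infty$, and invoke the GTCC---matches the paper's sketch. However, the final step has a genuine gap. You assert that $\dot\zeta_j(0) \to \dot\zeta(0)$ ``in a suitable reparametrization,'' but the limit curve theorems give only $C^0$ (uniform on compacts) convergence, and your $\zeta_j$ are merely Lipschitz causal curves, not geodesics. There is no reason for their initial tangent vectors to exist, let alone converge, nor can you conclude that the $\zeta_j$ are themselves timelike geodesics---much less \emph{maximizing} ones---from the timelikeness of the limit $\zeta$. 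Your proposed fix (``a maximizing causal geodesic cannot switch between timelike and lightlike character'') is inapplicable because the $\zeta_j$ are not geodesics. A minor echo of this issue already appears in your single-point warm-up: the lightlike ray produced by Proposition~\ref{pr:GH3.1} for \emph{fixed} $t$ is not a generalized co-ray in the sense of Definition~\ref{df:coray}, which requires $t_k \to \infty$.

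The paper (following \cite{GH}) closes the contradiction the other way around, and this is the missing idea: for each $k$, apply Proposition~\ref{pr:GH3.1} to a limit maximizing sequence from $z_k$ to $\eta(t_k)$. Since $z_k \ll \eta(t_k)$, a maximizing curve would be a timelike geodesic, which is excluded by hypothesis; hence the only alternative in Proposition~\ref{pr:GH3.1} is a lightlike ray $\gamma_k$ from $z_k$. Passing to a subsequence, the $\gamma_k$ converge to a causal ray $\gamma$ from $x$, and a diagonal extraction from the underlying approximating sequences exhibits $\gamma$ as the limit of a single limit maximizing sequence with endpoints $\eta(t_k)$, $t_k \to \infty$---so $\gamma$ is a generalized co-ray. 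But $\gamma$ is lightlike, being the $C^0$ limit of lightlike curves, contradicting the GTCC. In short, one pushes the lightlike character forward to the limit, rather than trying to pull timelike information back from it.
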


See \cite[Lemma~3.3]{GH} for a proof
(roughly speaking,
if there do not exist such $U$ and $T$, then we find a lightlike generalized co-ray of $\eta$
emanating from $x$, a contradiction to the GTCC).
A similar argument shows that the GTCC is an open condition in $I(\eta)$.

Furthermore, taking smaller $U$ and larger $T$ if necessary,
we can show that there exists a compact set $K \subset \Omega \cap F^{-1}(1)$
such that any maximizing timelike geodesic $\zeta$ from $z \in U$ to $\eta(t)$ with $t>T$
of unit speed satisfies $\dot{\zeta}(0) \in K$ (we refer to \cite[Lemma~3.4]{GH} for a proof).
Therefore, we find the following.

\begin{lemma}[Existence of timelike asymptotes]\label{lm:asymp}
Let $\eta:[0,\infty) \lra M$ be a timelike ray and suppose that the GTCC for $\eta$
holds at some $x \in I(\eta)$.
Then there exists a neighborhood $U$ of $x$ such that every $z \in U$
admits a timelike asymptote of $\eta$ emanating from $z$.
\end{lemma}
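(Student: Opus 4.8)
The plan is to deduce Lemma~\ref{lm:asymp} as a fairly quick consequence of Lemma~\ref{lm:GH3.3} together with the compactness remark that precedes it. First I would invoke Lemma~\ref{lm:GH3.3} to obtain a neighborhood $U$ of $x$ and a threshold $T>0$ so that, for every $z \in U$ and every $t>T$, there is a maximizing timelike geodesic from $z$ to $\eta(t)$; by the remark following that lemma (after shrinking $U$ and enlarging $T$ if necessary) there is in addition a compact set $K \subset \Omega \cap F^{-1}(1)$ such that the unit-speed reparametrization of any such geodesic has initial velocity in $K$. Fixing $z \in U$, I would pick a sequence $t_k \to \infty$ with $t_k>T$ and, for each $k$, a maximizing timelike geodesic $\zeta_k:[0,a_k] \lra M$ of unit speed from $z$ to $\eta(t_k)$, so that $\dot{\zeta}_k(0) \in K$.

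Next I would extract the asymptote by a limit argument. Since $K$ is compact, a subsequence of $(\dot{\zeta}_k(0))_k$ converges to some $v \in K \subset \Omega \cap F^{-1}(1)$; in particular $v$ is timelike. Using future timelike geodesic completeness (in force throughout this section) and the continuous dependence of geodesics on initial data, the geodesics $\zeta_k$ converge uniformly on compact intervals to the geodesic $\zeta:[0,\infty) \lra M$ with $\dot{\zeta}(0)=v$. Because each $\zeta_k$ is maximizing (hence the sequence is trivially limit maximizing) with $\zeta_k(0)=z$ and $\zeta_k(a_k)=\eta(t_k)$, $t_k \to \infty$, the curve $\zeta$ is by Definition~\ref{df:coray} an asymptote of $\eta$ emanating from $z$; Lemma~\ref{lm:GH2.4} (or Lemma~\ref{lm:GH2.3} applied on each compact subinterval) guarantees that $\zeta$ is genuinely a ray, and since $\dot{\zeta}(0)=v$ is timelike and $\zeta$ is a geodesic, $\zeta$ is a timelike ray. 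This produces, for each $z \in U$, the desired timelike asymptote.

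The only genuine subtlety — and the step I would be most careful about — is making sure the limit curve is itself timelike rather than merely causal: a priori the limit of timelike maximizing geodesics could degenerate to a lightlike ray (this is exactly the phenomenon that Proposition~\ref{pr:GH3.1} warns about and that GTCC is designed to rule out). Here the point is that the compactness refinement of Lemma~\ref{lm:GH3.3}, namely $\dot{\zeta}_k(0) \in K$ with $K \subset \Omega \cap F^{-1}(1)$ \emph{compact} inside the \emph{open} set of future-directed timelike unit vectors, forces the limiting velocity $v$ to stay timelike, so no such degeneration occurs. I would state this explicitly, attributing the compactness input to the cited argument of \cite[Lemma~3.4]{GH}, and then the conclusion follows with no further work. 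The remaining parts of the argument (convergence of geodesics, the limit being an asymptote) are routine given the tools already assembled in this section.
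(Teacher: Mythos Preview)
Your proposal is correct and follows essentially the same approach as the paper: the paper deduces Lemma~\ref{lm:asymp} directly from Lemma~\ref{lm:GH3.3} together with the compactness statement about $K \subset \Omega \cap F^{-1}(1)$ (attributed to \cite[Lemma~3.4]{GH}), and your argument spells out exactly this deduction, including the key observation that compactness of $K$ inside the open cone $\Omega$ prevents the limiting initial velocity from degenerating to a lightlike vector.
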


Finally, the GTCC holds on a neighborhood of $\eta$ as follows.

\begin{proposition}[GTCC near rays]\label{pr:GH5.1}
Let $\eta:[0,\infty) \lra M$ be a timelike ray.
Then, any generalized co-ray emanating from $\eta(a)$ with $a>0$
necessarily coincides with $\eta$.
In particular, the GTCC holds on an open set including $\eta((0,\infty))$.
\end{proposition}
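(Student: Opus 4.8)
The plan is to prove Proposition~\ref{pr:GH5.1} by contradiction, following the strategy of \cite[Proposition~5.1]{GH} but adapted to the Finsler setting via the limit curve theorems already set up in Lemmas~\ref{lm:GH2.3}--\ref{lm:GH3.3}. Suppose $\zeta:[0,\infty)\lra M$ is a generalized co-ray of $\eta$ emanating from $\zeta(0)=\eta(a)$ with $a>0$ that does not coincide with $\eta$. By definition there is a limit maximizing sequence $\zeta_k:[0,b_k]\lra M$ of causal curves with $\zeta_k(0)\to\eta(a)$ and $\zeta_k(b_k)=\eta(t_k)$, $t_k\to\infty$, whose (reparametrized) limit is $\zeta$. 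First I would use the reverse triangle inequality together with the fact that $\eta$ is a ray to control the lengths: for large $t$,
\[
d\big(\eta(0),\zeta_k(0)\big)+d\big(\zeta_k(0),\eta(t_k)\big) \le d\big(\eta(0),\eta(t_k)\big)=\ell\big(\eta|_{[0,t_k]}\big),
\]
which, letting $k\to\infty$ and using $\zeta_k(0)\to\eta(a)$ and lower/upper semicontinuity of $d$ and $\ell$, pins down $d(\eta(a),\eta(t_k))=\ell(\eta|_{[a,t_k]})$ and forces the limiting maximizer emanating from $\eta(a)$ to have the same initial data as $\eta$.

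The key step is the \emph{initial-direction rigidity}: I would show that $\dot\zeta(0)=\dot\eta(a)$. Consider the concatenation of $\eta|_{[0,a]}$ with $\zeta_k$; this is a causal curve from $\eta(0)$ to $\eta(t_k)$, and combining its length with the displayed reverse triangle inequality shows it is limit maximizing from $\eta(0)$ to $\eta(t_k)$. By the limit curve theorem (Lemma~\ref{lm:GH2.4}, or rather the two-endpoint version feeding into Lemma~\ref{lm:GH2.3}) a subsequence converges to a maximizing causal curve from $\eta(0)$ that must agree with $\eta$ on $[0,a]$ by uniqueness of the maximizer there, and whose restriction past $a$ is $\zeta$. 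Since a maximizing causal geodesic is $C^\infty$ and geodesics are determined by their initial velocity, the broken curve $\eta|_{[0,a]}*\zeta$ being an \emph{unbroken} maximizing geodesic forces $\dot\zeta(0)=\dot\eta(a)$, hence $\zeta=\eta|_{[a,\infty)}$ by uniqueness of geodesics (using future timelike geodesic completeness, which holds throughout this section). In particular $\zeta$ is timelike, so the GTCC holds at $\eta(a)$ for every $a>0$; since the GTCC is an open condition in $I(\eta)$ (remarked after Lemma~\ref{lm:GH3.3}), it holds on an open set containing $\eta((0,\infty))$.

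The main obstacle I anticipate is handling the possibility, flagged right after Lemma~\ref{lm:GH2.4} and in Proposition~\ref{pr:GH3.1}, that a limit maximizing sequence converges to a \emph{lightlike} ray rather than to a timelike maximizer: a priori the generalized co-ray $\zeta$ need not be timelike, so I cannot immediately invoke geodesic uniqueness in the timelike cone. The resolution is that the concatenation $\eta|_{[0,a]}*\zeta_k$ starts with the genuinely timelike segment $\eta|_{[0,a]}$ of positive length; its limit is therefore a maximizing causal curve of length at least $\ell(\eta|_{[0,a]})>0$ that coincides with $\eta$ near $t=0$, so by Proposition~\ref{pr:GH3.1} the limit is not a lightlike ray but an honest maximizing geodesic, which then must be $\eta$ itself on all of $[0,\infty)$ as a maximizing timelike geodesic extending $\eta|_{[0,a]}$. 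A secondary technical point is the bookkeeping of reparametrizations (the $\zeta_k$ come with unit-$F$-speed parametrizations only in the timelike case), which I would dispatch exactly as in \cite{GH} using the convex-neighborhood-based limit curve theorem from \cite{Min-Ray,Min-causality}.
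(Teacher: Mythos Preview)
Your approach is essentially that of \cite[Proposition~5.1]{GH}, which is precisely what the paper invokes (the paper gives no independent argument), and your handling of the lightlike-limit obstacle via the genuinely timelike prefix $\eta|_{[0,a]}$ is the right idea.

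Two small repairs are needed. First, in the definition of a generalized co-ray one only has $\zeta_k(0)\to\eta(a)$, not $\zeta_k(0)=\eta(a)$, so the concatenation $\eta|_{[0,a]}*\zeta_k$ is not defined; replace $\eta|_{[0,a]}$ by a nearly-maximizing causal curve $\sigma_k$ from $\eta(0)$ to $\zeta_k(0)$ (which exists since $\zeta_k(0)\in I^+(\eta(0))$ for large $k$), and check via lower semi-continuity of $d$ and the reverse triangle inequality that $\sigma_k*\zeta_k$ is limit maximizing from $\eta(0)$ to $\eta(t_k)$. Second, your appeal to ``uniqueness of the maximizer'' from $\eta(0)$ to $\eta(a)$ deserves a word of justification: since $\eta|_{[0,b]}$ is maximizing for every $b>a$, the point $\eta(a)$ lies strictly before any future timelike cut point of $\eta(0)$ along $\eta$, whence the maximizing timelike segment from $\eta(0)$ to $\eta(a)$ is unique and equals $\eta|_{[0,a]}$. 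With these fixes your argument goes through and matches the paper's.
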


The first assertion is shown in the same manner as \cite[Proposition~5.1]{GH}
(and could be compared with \cite[Lemma~3.4]{Es}).
The second assertion follows from the fact that the GTCC is an open condition
(see \cite[Corollary~5.2]{GH}).

\section{Busemann functions}\label{sc:busemann}

In this section, we continue assuming the future timelike geodesic completeness.

\subsection{For rays}\label{ssc:rays}

Let $\eta:[0,\infty)\lra M$ be a timelike ray of unit speed.
We introduce a central ingredient of the proof of the splitting theorem.

\begin{definition}[Busemann functions]\label{df:Buse}
Define the \emph{Busemann function} $\bb^{\eta}:M \lra [-\infty,+\infty]$
associated with $\eta$ by
\[ \bb^{\eta}(x) :=\lim_{t \to \infty} \bb^{\eta}_t(x), \qquad
 \text{where}\,\ \bb^{\eta}_t(x) :=t-d\big( x,\eta(t) \big). \]
\end{definition}

The limit above always exists in $\R \cup \{\pm \infty\}$.
Precisely, if $x \not\in I^-(\eta)$, then $d(x,\eta(t))=0$ for all $t$ and hence $\bb^{\eta}(x)=\infty$
(recall \eqref{eq:I(eta)} for the definition of $I^-(\eta)$).
If $x \in I^-(\eta)$, then the reverse triangle inequality implies that, for large $s<t$,
\[ \bb^{\eta}_t(x) \le t-\big\{ d\big( x,\eta(s) \big) +d\big( \eta(s),\eta(t) \big) \big\}
 =\bb^{\eta}_s(x). \]
Hence $\bb^{\eta}_t(x)$ is non-increasing in $t$ and converges to $\bb^{\eta}(x) \in \R \cup \{-\infty\}$.
Moreover, since $d$ is lower semi-continuous, $\bb^{\eta}$ is upper semi-continuous on $I^-(\eta)$.

When $x \in I(\eta)$, it follows from \eqref{eq:GH2-4} (with $y=x$) that
$\bb^{\eta}(x) \ge d(\eta(0),x) \ge 0$.
Therefore $\bb^{\eta}$ is well-posed in the domain $I(\eta)$ of our interest.
We also observe that, for $x,y \in M$ with $x \le y$,
\begin{equation}\label{eq:GH2-6}
\bb^{\eta}(y) \ge \bb^{\eta}(x) +d(x,y)
\end{equation}
by the reverse triangle inequality.
In the Riemannian or Finsler setting, the reverse inequality to \eqref{eq:GH2-6} holds for all $x,y$
and it implies that the Busemann function is $1$-Lipschitz continuous.
In the Lorentzian case, however, $\bb^{\eta}$ is not necessarily continuous.

Next we introduce useful upper support functions for $\bb^{\eta}$ on $I(\eta)$.
We give a proof (in the same manner as \cite[Lemma~2.5]{GH}) for completeness.

\begin{lemma}[Support functions for $\bb^{\eta}$]\label{lm:GH2.5}
Let $\zeta:[0,\infty) \lra M$ be a timelike asymptote of $\eta$ of unit speed
with $z:=\zeta(0) \in I(\eta)$.
\begin{enumerate}[{\rm (i)}]
\item\label{zeta1}
For each $t>0$,
\begin{equation}\label{eq:GH2-7}
\rho(x) :=\bb^{\eta}(z) +\bb^{\zeta}_t(x) =\bb^{\eta}(z) +t -d\big( x,\zeta(t) \big)
\end{equation}
is an \emph{upper support function} for $\bb^{\eta}$ at $z$ in the sense that
$\rho(x) \ge \bb^{\eta}(x)$ on a neighborhood of $z$ and $\rho(z)=\bb^{\eta}(z)$.

\item\label{zeta2}
We have
\begin{equation}\label{eq:bzeta}
\bb^{\eta} \big( \zeta(t) \big) =\bb^{\eta}(z) +t
\end{equation}
for all $t \ge 0$.
In particular, $I(\zeta) \subset I(\eta)$ holds.
\end{enumerate}
\end{lemma}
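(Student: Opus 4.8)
The plan is to follow the classical Lorentzian argument from \cite[Lemma~2.5]{GH}, taking care of the non-Hausdorff subtleties caused by the possible lack of global hyperbolicity. For part~\eqref{zeta1}, recall that $\zeta$ is a timelike asymptote: by Definition~\ref{df:coray} there is a sequence of maximizing causal curves $\zeta_k:[0,a_k]\to M$ with $\zeta_k(0)=z$ and $\zeta_k(a_k)=\eta(s_k)$, $s_k\to\infty$, converging (after reparametrization) uniformly on compact intervals to $\zeta$. First I would fix $t>0$ and, for $x$ in a small neighborhood of $\zeta(t)$ lying in $I^-(\zeta)$, estimate $d(x,\eta(s_k))$ from below by concatenating a maximizing curve from $x$ to a point $\zeta_k(t')$ near $\zeta(t)$ with the remaining segment $\zeta_k|_{[t',a_k]}$; passing to the limit and using the lower semicontinuity of $d$ together with Lemma~\ref{lm:GH2.3} (so that $\zeta|_{[0,t]}$ is maximizing, hence $d(z,\zeta(t))=t$), one gets
\[
\bb^{\eta}_{s_k}(x)=s_k-d\big(x,\eta(s_k)\big)\le s_k-\big\{d\big(x,\zeta(t)\big)+\big(s_k-t\big)\big\}+o(1)
\]
for $x$ near $\zeta(t)$, and then sending $k\to\infty$ yields $\bb^{\eta}(x)\le t-d(x,\zeta(t))+\bb^{\eta}(z)=\rho(x)$. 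Here one uses that $\bb^{\eta}_{s_k}(z)\to\bb^{\eta}(z)$ along the asymptote-defining sequence, which holds because $\zeta_k$ is limit maximizing from $z$ to $\eta(s_k)$. The equality $\rho(z)=\bb^{\eta}(z)$ is immediate since $d(z,\zeta(t))=t$, which in turn follows because $\zeta$ is a unit-speed asymptote and hence a ray.

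For part~\eqref{zeta2}, the inequality $\bb^{\eta}(\zeta(t))\ge\bb^{\eta}(z)+t$ is exactly \eqref{eq:GH2-6} applied to $z\le\zeta(t)$ with $d(z,\zeta(t))=t$. For the reverse inequality, I would translate the asymptote: the tail $\zeta|_{[t,\infty)}$ is again a timelike ray, and the curves $\zeta_k|_{[t,a_k]}$ (suitably reparametrized) exhibit it as a timelike asymptote of $\eta$ emanating from $\zeta(t)$. Applying part~\eqref{zeta1} to this translated asymptote, with evaluation at its parameter $0$ (i.e.\ at the point $\zeta(t)$ itself), gives an upper support function $\rho'$ with $\rho'(\zeta(t))=\bb^{\eta}(\zeta(t))$; but evaluating the chain of inequalities from part~\eqref{zeta1} along $\zeta$ itself at the initial point $z$ forces $\bb^{\eta}(\zeta(t))\le\bb^{\eta}(z)+t$. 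Concretely, $\bb^{\eta}(z)=\rho(z)=\bb^{\eta}(z)+t-d(z,\zeta(t))$ is consistent, and running the support inequality the other way — using that $\zeta$ restricted to $[0,t]$ realizes the Busemann increment — closes the gap. The statement $I(\zeta)\subset I(\eta)$ then follows formally: if $x\in I(\zeta)$, then $x\ll\zeta(t)$ for some $t>0$ and $\zeta(t)\gg\zeta(0)=z\in I(\eta)$, while \eqref{eq:bzeta} guarantees $\zeta(t)\in I^-(\eta)$ as well, so the chronology relations compose to put $x\in I^+(\eta(0))\cap I^-(\eta)=I(\eta)$.

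The main obstacle I anticipate is the bookkeeping in part~\eqref{zeta1}: making the "concatenate near $\zeta(t)$, then follow $\zeta_k$ to the end" argument rigorous requires choosing the reparametrization points $t'=t'_k\to t$ carefully so that $\zeta_k(t'_k)$ converges to $\zeta(t)$ \emph{and} the leftover length $\ell(\zeta_k|_{[t'_k,a_k]})$ converges to $s_k-t$ uniformly enough, all while the basepoint $x$ ranges over a fixed neighborhood of $\zeta(t)$ on which $d(x,\cdot)$ behaves well. This is where the limit curve theorems (as quoted in Section~\ref{sc:ray}) and the upper semicontinuity of $\ell$ do the real work. Once the support function in~\eqref{zeta1} is in hand, part~\eqref{zeta2} is a short deduction, and the inclusion $I(\zeta)\subset I(\eta)$ is purely formal.
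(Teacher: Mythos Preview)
Your strategy is the paper's own (and the classical \cite{GH} one), but the execution has two concrete slips and one unnecessary complication.

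First, in part~\eqref{zeta1} you take $x$ ``in a small neighborhood of $\zeta(t)$''. The support function is at $z=\zeta(0)$, so you want $x$ near $z$; concretely, $x\in I^+(\eta(0))\cap I^-(\zeta(t))$, which is an open set containing $z$. Second, your displayed inequality loses the term $\bb^{\eta}(z)$: the length of the tail $\zeta_k|_{[t,a_k]}$ is not $s_k-t$ but $d(z,\eta(s_k))-d(z,\zeta_k(t))$ (since $\zeta_k$ is maximizing), so the reverse triangle inequality gives
\[
\bb^{\eta}_{s_k}(x)-\bb^{\eta}_{s_k}(z)
 \le d\big(z,\zeta_k(t)\big)-d\big(x,\zeta_k(t)\big),
\]
and it is this difference that converges to $t-d(x,\zeta(t))$ via Lemma~\ref{lm:GH2.3} and lower semicontinuity of $d$. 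With this correction the $\bb^{\eta}(z)$ term appears automatically and there is no need to invoke any maximizing curve from $x$ to $\zeta_k(t')$ (which may fail to exist without global hyperbolicity), nor any delicate choice of $t'_k$: one simply evaluates each $\zeta_k$ at the fixed parameter $t$ after reparametrizing to unit speed, and uses uniform convergence on $[0,t]$.

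For part~\eqref{zeta2}, your detour through the translated asymptote $\zeta|_{[t,\infty)}$ is not needed and the argument as written is circular. The paper just plugs $x=\zeta(s)$ into the support inequality of part~\eqref{zeta1} with any parameter $t>s$, giving $\bb^{\eta}(\zeta(s))\le \rho(\zeta(s))=\bb^{\eta}(z)+t-d(\zeta(s),\zeta(t))=\bb^{\eta}(z)+s$; together with \eqref{eq:GH2-6} this yields \eqref{eq:bzeta}. The inclusion $I(\zeta)\subset I(\eta)$ then follows because $\bb^{\eta}$ is finite along $\zeta$ (so $\zeta([0,\infty))\subset I^-(\eta)$) and $z\in I^+(\eta(0))$.
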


Recall from Lemma~\ref{lm:asymp} and Proposition~\ref{pr:GH5.1} that
points in a neighborhood of $\eta((0,\infty))$ admit timelike asymptotes.

\begin{proof}
\eqref{zeta1}
Let $\zeta_k:[0,a_k] \lra M$ be a sequence of maximizing timelike geodesics
with $\zeta_k(0)=z$ converging to $\zeta$.
By definition (Definition~\ref{df:coray}), we have $\zeta_k(a_k)=\eta(t_k)$ for some $t_k$
and $\lim_{k \to \infty} a_k=\lim_{k \to \infty} t_k =\infty$.
Fix $x \in I^+(\eta(0)) \cap I^-(\zeta(t))$ and take large $k$
so that $z \in I^-(\eta(t_k))$, $x \in I^-(\zeta_k(t))$ and $a_k>t$.
Then we deduce from the reverse triangle inequality that
\begin{align*}
\bb^{\eta}_{t_k}(x) -\bb^{\eta}_{t_k}(z)
&= d\big( z,\eta(t_k) \big) -d\big( x,\eta(t_k) \big) \\
&\le d\big( z,\eta(t_k) \big) -d\big( x,\zeta_k(t) \big) -d\big( \zeta_k(t),\eta(t_k) \big) \\
&= d\big( z,\zeta_k(t) \big) -d\big( x,\zeta_k(t) \big).
\end{align*}
Letting $k \to \infty$ yields
\[ \bb^{\eta}(x) -\bb^{\eta}(z) \le d\big( z,\zeta(t) \big) -d\big( x,\zeta(t) \big)
 =t-d\big( x,\zeta(t) \big) \]
with the help of Lemma~\ref{lm:GH2.3}.
This completes the proof of \eqref{zeta1}.

\eqref{zeta2}
For $s<t$, we have $\zeta(s) \in I^+(\eta(0)) \cap I^-(\zeta(t))$
and hence \eqref{zeta1} implies
\[ \bb^{\eta} \big( \zeta(s) \big) \le \rho \big( \zeta(s) \big) =\bb^{\eta}(z) +s. \]
Combining this with \eqref{eq:GH2-6},
we find that $\bb^{\eta}(\zeta(s)) =\bb^{\eta}(z) +s$ holds for all $s \ge 0$.
This also shows that $\bb^{\eta}<\infty$ on $\zeta$ and hence $\zeta([0,\infty)) \subset I(\eta)$.
Thus we have $I(\zeta) \subset I(\eta)$.
$\qedd$
\end{proof}

The continuity of the Busemann function can be shown
in the same way as \cite[Theorem~3.7]{GH} (see also \cite[Lemma~3.3]{Es}).

\begin{theorem}[Continuity of $\bb^{\eta}$]\label{th:GH3.7}
Assume that the GTCC for $\eta$ holds at $x \in I(\eta)$.
Then $\bb^{\eta}$ is Lipschitz continuous on a neighborhood of $x$.
\end{theorem}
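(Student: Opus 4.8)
The plan is to follow the strategy of \cite[Theorem~3.7]{GH}, exploiting the upper support functions constructed in Lemma~\ref{lm:GH2.5} together with the fact (Lemma~\ref{lm:asymp}, Proposition~\ref{pr:GH5.1}) that timelike asymptotes exist near $\eta$ and that the GTCC is an open condition. Since $\bb^{\eta}$ is already known to be upper semi-continuous on $I^-(\eta) \supset I(\eta)$, it suffices to produce a \emph{local lower support} for $\bb^{\eta}$ of the form "$\bb^{\eta}(z)$ plus a fixed locally Lipschitz function" near each point $x$ where the GTCC holds; combined with the upper support functions $\rho$ from \eqref{eq:GH2-7}, this pins $\bb^{\eta}$ between two locally Lipschitz functions agreeing at $x$, forcing Lipschitz continuity on a neighborhood.

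First I would fix $x \in I(\eta)$ at which the GTCC holds, and use Lemma~\ref{lm:GH3.3} to obtain a neighborhood $U$ of $x$ and $T>0$ such that any $z \in U$ is joined to $\eta(t)$, $t>T$, by a maximizing timelike geodesic; shrinking $U$, I would also arrange (via the compactness of admissible initial velocities, \cite[Lemma~3.4]{GH}, recalled after Lemma~\ref{lm:GH3.3}) that these geodesics have initial velocities in a fixed compact $K \subset \Omega \cap F^{-1}(1)$. Next, for the \emph{upper bound}, I would pick a timelike asymptote $\zeta$ of $\eta$ from $x$ (Lemma~\ref{lm:asymp}) and use the support function $\rho$ of Lemma~\ref{lm:GH2.5}\eqref{zeta1}: on a neighborhood of $x$ one has $\bb^{\eta} \le \rho = \bb^{\eta}(x) + t - d(\cdot,\zeta(t))$, and $y \mapsto d(y,\zeta(t))$ is (locally) Lipschitz near $x$ since $x \in I^-(\zeta(t))$ and $d$ is smooth and positive on the open set of chronologically related pairs connected by a unique maximizer through the exponential map of the Berwald structure. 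For the \emph{lower bound}, for each $z \in U$ I would take a maximizing timelike geodesic $\sigma_z$ from $x$ to $z$ (if $z \in I^+(x)$) or from $z$ to $x$ (if $z \in I^-(x)$), use the reverse triangle inequality \eqref{eq:GH2-6} in the form $\bb^{\eta}(z) \ge \bb^{\eta}(x) + d(x,z)$ resp.\ $\bb^{\eta}(x) \ge \bb^{\eta}(z) + d(z,x)$, and bound $d(x,z)$ below by a Lipschitz function of $z$ using, again, smoothness of $d$ near the diagonal-adjacent region; the general $z \in U$ (not comparable with $x$) is handled by applying the same inequalities through a nearby comparable auxiliary point, exactly as in \cite{GH}. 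Assembling these, $\bb^{\eta}(z)$ lies between two functions that are Lipschitz on a common neighborhood $V \subset U$ of $x$ and coincide at $x$, so $\bb^{\eta}$ is Lipschitz on $V$.

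The main obstacle I anticipate is controlling the regularity of the distance function $d$ near $x$ in the non-globally hyperbolic Finsler-spacetime setting: one must know that $(y,t) \mapsto d(y,\zeta(t))$ and $z \mapsto d(x,z)$ are genuinely locally Lipschitz (indeed smooth) on the relevant neighborhoods, which in turn rests on the absence of conjugate/cut points along the short maximizers and on smoothness of the Berwald exponential map near the zero section (cf.\ Section~\ref{ssc:curv} and Remark~\ref{rm:Hess}). This requires choosing $U$, $T$, and the target parameter $t$ carefully so that all geodesics involved stay in a region free of focal behavior, using the uniform control on initial velocities in $K$; this is precisely the point where the Berwald hypothesis and the limit-curve machinery of Section~\ref{sc:ray} are invoked. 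Once this local smoothness of $d$ is secured, the rest is the bookkeeping of reverse triangle inequalities as above, and the openness of the GTCC (Proposition~\ref{pr:GH5.1}) propagates Lipschitz continuity to a full neighborhood of $\eta((0,\infty))$.
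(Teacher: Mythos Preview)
Your outline has the right ingredients (Lemma~\ref{lm:GH3.3}, the compact set $K$ of admissible initial velocities, and the support functions of Lemma~\ref{lm:GH2.5}), and the paper indeed defers entirely to \cite[Theorem~3.7]{GH}. However, the way you assemble these ingredients contains a genuine logical gap.

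You claim that once $\bb^{\eta}(z)$ is sandwiched between two Lipschitz functions that agree at the single point $x$, it follows that $\bb^{\eta}$ is Lipschitz on a neighborhood. This is false: being trapped between two Lipschitz functions that touch at $x$ yields only continuity \emph{at} $x$, not Lipschitz continuity nearby (consider $-|z|\le |z|\sin(1/z)\le |z|$). The argument in \cite{GH} does not proceed this way. Rather, one uses that for \emph{every} $z$ in the neighborhood $U$ furnished by Lemma~\ref{lm:GH3.3} and every $t>T$ there is a maximizing timelike geodesic from $z$ to $\eta(t)$ with initial velocity in the fixed compact $K\subset\Omega\cap F^{-1}(1)$; the first variation formula then gives a uniform (in $t$ and in $z\in U$) Lipschitz bound on $z\mapsto d(z,\eta(t))$ with respect to an auxiliary Riemannian metric. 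This yields a uniform Lipschitz bound on each $\bb^{\eta}_t$, which survives the limit $t\to\infty$. In other words, the support functions must be produced at every point of $U$ with uniformly controlled gradient, not just at $x$; your ``lower bound via \eqref{eq:GH2-6} through a comparable auxiliary point'' does not substitute for this, since \eqref{eq:GH2-6} gives only monotonicity along causal curves and says nothing for spacelike-separated pairs.

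A secondary point: you invoke the Berwald hypothesis and smoothness of the exponential map at the zero section to get regularity of $d$. At this stage of the paper no Berwald assumption is in force (it enters only from Section~\ref{sc:max} onward), and none is needed: the relevant regularity of $d(\cdot,\eta(t))$ near $z\in U$ follows simply from the existence of a maximizing timelike geodesic with endpoint before the cut locus, which holds in general Finsler spacetimes.
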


We remark that the Lipschitz continuity is understood with respect to an auxiliary Riemannian metric
(the choice of a metric costs no generality since the assertion is local).

For the sake of further analyzing support functions as in Lemma~\ref{lm:GH2.5},
we would like to have some control over the past timelike cut locus.
Given $z \in M$, we say that $x \in I^-(z)$ is a \emph{past timelike cut point} to $z$
if there is a maximizing timelike geodesic $\xi:[0,1] \lra M$ from $x$ to $z$
such that its extension $\tilde{\xi}:[-\delta,1] \lra M$ is not maximizing for any $\delta>0$.
The \emph{past timelike cut locus} of $z$ is the set of all past timelike cut points to $z$.
(We remark that the past timelike cut locus for $L$ is the future timelike cut locus for $\rev{L}$.)

\begin{proposition}\label{pr:GH3.9}
Let $\zeta:[0,\infty) \lra M$ be a timelike asymptote of $\eta$ with $\zeta(0) \in I(\eta)$
and assume that the GTCC for $\eta$ holds at $\zeta(0)$.
Then, for each $t>0$, there exists a neighborhood of $\zeta([0,t])$
which does not meet the past timelike cut locus of $\zeta(t)$.
\end{proposition}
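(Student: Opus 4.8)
The plan is to argue by contradiction along the lines of \cite[Proposition~3.9]{GH}. Suppose the conclusion fails for some $t>0$: then there is a sequence $x_k \to \zeta(t)$ such that each $x_k$ lies on the past timelike cut locus of $\zeta(t)$. First I would use the characterization of past cut points to produce, for each $k$, a maximizing timelike geodesic $\xi_k$ from $x_k$ to $\zeta(t)$ whose extension past $x_k$ fails to be maximizing; equivalently, along $\xi_k$ there is either a past focal/conjugate-type phenomenon or a ``second'' maximizing geodesic. Applying the limit curve theorem (available in the Lorentz--Finsler setting as recalled in Section~\ref{sc:ray}, using convex neighborhoods), a reparametrized subsequence of the $\xi_k$ converges to a maximizing timelike geodesic $\xi_\infty$ from $\zeta(t)$ back to $\zeta(t)$ in the limit, i.e.\ to (a segment of) a maximizing causal curve ending at $\zeta(t)$. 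Since the endpoints $x_k$ converge to $\zeta(t)$ itself, the limiting object degenerates, and I would need to extract from this degeneration a genuine broken or conjugate behavior of $\zeta$ at $\zeta(t)$.

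The key step is then to show that such degeneration forces $\zeta(t)$ to be a past timelike cut point of $\zeta(t)$ along $\zeta$ itself (or produces a second maximizing geodesic distinct from $\zeta|_{[0,t]}$ joining $\zeta(0')$-type points to $\zeta(t)$ for nearby parameters), contradicting that $\zeta$ is an \emph{asymptote} of $\eta$. Here the essential input is Lemma~\ref{lm:GH2.5}\eqref{zeta2}, which gives $\bb^{\eta}(\zeta(s)) = \bb^{\eta}(z) + s$ for all $s \ge 0$, together with the support function $\rho$ from Lemma~\ref{lm:GH2.5}\eqref{zeta1}: because $\bb^{\eta}$ grows with unit rate exactly along $\zeta$ and is dominated near each point by $\rho(x) = \bb^{\eta}(z) + t - d(x,\zeta(t))$, any rival maximizing geodesic to $\zeta(t)$ through points near $\zeta(t-\epsilon)$ would have to realize the same Busemann increment, and uniqueness of the asymptote direction (from the compactness of initial velocities noted after Lemma~\ref{lm:GH3.3}, and Proposition~\ref{pr:GH5.1} giving rigidity of generalized co-rays near $\eta$) rules this out. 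I would also invoke that $\zeta([0,\infty)) \subset I(\eta)$ and that the GTCC is open (so it holds along all of $\zeta|_{[0,t]}$), ensuring all nearby co-rays stay timelike so that the limit curve produced above cannot escape to a lightlike ray.

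Concretely, the argument proceeds as: (i) assume $x_k \to \zeta(t)$ are past cut points; (ii) pick maximizing $\xi_k$ from $x_k$ to $\zeta(t)$ witnessing the cut property and apply the limit curve theorem to get a limiting maximizing geodesic; (iii) since $x_k \to \zeta(t)$, analyze the degenerate limit to locate a conjugate point or a branching of maximizers at or before $\zeta(t)$ along a limiting co-ray $\zeta'$ of $\eta$; (iv) use Lemma~\ref{lm:GH2.5} to show $\zeta'$ must agree with $\zeta$ on $[0,t]$ (same Busemann values, same unit-speed rate, plus the support-function inequality forces equality of geodesics by the strict concavity/convexity coming from the Lorentzian signature of $g_{\dot\zeta}$); (v) conclude $\zeta(t)$ would be a cut point of itself along $\zeta$, which is absurd. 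The main obstacle I anticipate is step (iii)--(iv): making precise how the vanishing separation $d(x_k,\zeta(t)) \to 0$ interacts with the limit curve theorem to yield a \emph{non-trivial} conjugate point rather than a vacuous statement, and ensuring the limiting geodesic is compared to $\zeta$ and not merely to some other asymptote. This is exactly where the Berwald hypothesis enters only mildly (smoothness of $\exp$ near the zero section for the conjugate-point analysis), while the bulk of the work is the careful bookkeeping of support functions, already packaged in Lemma~\ref{lm:GH2.5} and Proposition~\ref{pr:GH5.1}.
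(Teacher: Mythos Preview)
Your contradiction is set up incorrectly. The claim is that some neighborhood of the \emph{segment} $\zeta([0,t])$ avoids the past timelike cut locus of $\zeta(t)$; its negation gives a sequence $x_k$ of past cut points of $\zeta(t)$ converging to some point $p \in \zeta([0,t])$, not to $\zeta(t)$. The case $p=\zeta(t)$ is in fact trivially excluded by the existence of convex normal neighborhoods, so your entire ``degeneration'' discussion (where $d(x_k,\zeta(t))\to 0$) is analyzing a case that never arises. The genuine cases are $p=\zeta(s)$ with $0<s<t$ and, more delicately, $p=\zeta(0)$.

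For interior $p=\zeta(s)$, the limit of the $\xi_k$ gives either a conjugate point of $\zeta(t)$ at $p$ along $\zeta$ or a second maximizing geodesic from $p$ to $\zeta(t)$; both contradict that $\zeta$ is a ray (the second via a broken maximizer through $\zeta(0)$, $p$, $\zeta(t)$). The hard case is $p=\zeta(0)$, and here the paper singles out the key ingredient you are missing: one needs the continuity of $d(\cdot,\zeta(t))$ at $\zeta(0)$, which is \emph{not} automatic without global hyperbolicity. As the paper notes (following \cite[Sublemma~14.24]{BEE}), this continuity is deduced from the continuity of the Busemann function (Theorem~\ref{th:GH3.7}), together with the support relation $\bb^{\eta}(x)\le \bb^{\eta}(\zeta(0))+t-d(x,\zeta(t))$. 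Your proposal gestures at Lemma~\ref{lm:GH2.5} but never isolates this continuity step, and the rigidity you invoke from Proposition~\ref{pr:GH5.1} concerns co-rays of $\eta$, not alternative maximizers to $\zeta(t)$. Rewrite the argument with the correct limit point $p\in\zeta([0,t])$ and make the continuity-of-$d$ step explicit.
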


This is proved along the lines of \cite[Proposition~3.9]{GH}.
We remark that a \emph{past-directed} causal curve means a future-directed causal curve
with respect to $\rev{L}$ in our setting.
(To be precise, Proposition~\ref{pr:GH3.9} is an analogue of \cite[Proposition~14.23]{BEE}.
There is a small gap in the proof of \cite[Lemma~3.10]{GH}
which can be fixed assuming the continuity of $d(\cdot,\alpha(r))$ at $p$,
and the continuity of $d$ in turn follows from the continuity of the Busemann function
(Theorem~\ref{th:GH3.7}) as proved in \cite[Sublemma~14.24]{BEE}.)

It follows from Proposition~\ref{pr:GH3.9} that $d(\cdot,\zeta(t))$ is smooth near $\zeta(0)$
and the Laplacian comparison theorem (Corollary~\ref{cr:Lcomp})
can be used to analyze $\rho(x)$ as in \eqref{eq:GH2-7}.

\subsection{For straight lines}\label{ssc:lines}

In splitting theorems, we assume the existence of a timelike \emph{straight line} $\eta:\R \lra M$,
which is a globally maximizing timelike geodesic of unit speed,
i.e., $d(\eta(s),\eta(t))=t-s$ holds for all $s<t$.
We will denote by $\bb^{\eta}$ the Busemann function associated with the ray $\eta|_{[0,\infty)}$.
Moreover, the curve $\bar{\eta}(t):=\eta(-t)$, $t \in [0,\infty)$, is a timelike ray of unit speed
with respect to the reverse structure $\rev{L}(v)=L(-v)$ (recall Section~\ref{sc:wLF}).
Hence we can introduce the corresponding Busemann function as
\[ \ol{\bb}{}^{\eta}(x):=\lim_{t \to \infty} \big\{ t-d\big( \eta(-t),x \big) \big\} \]
($d$ is with respect to $L$).
It follows from the reverse triangle inequality that
\begin{equation}\label{eq:b+b}
\bb^{\eta}(x)+\ol{\bb}{}^{\eta}(x) \ge \lim_{t \to \infty} \big\{ 2t -d\big( \eta(-t),\eta(t) \big) \big\} =0,
\end{equation}
and equality holds on $\eta$.

For straight lines we modify the definition of \eqref{eq:I(eta)} into
\[ I(\eta) :=\Bigg( \bigcup_{t>0} I^+\big( \eta(-t) \big) \Bigg)
 \cap \Bigg( \bigcup_{t>0} I^- \big( \eta(t) \big) \Bigg). \]
Note that $I(\eta) =\bigcup_{s<0} I(\eta|_{[s,\infty)})$,
therefore the results in this section are available in $I(\eta)$.
We deduce from \eqref{eq:GH2-6} and \eqref{eq:bzeta} the following (see \cite[Lemma~4.1]{Es}).

\begin{lemma}\label{lm:line}
Assume that $x \in I(\eta)$ satisfies $\bb^{\eta}(x)+\ol{\bb}{}^{\eta}(x)=0$,
and let $\zeta^+$ and $\zeta^-$ be timelike asymptotes of $\eta$ and $\bar{\eta}$
emanating from $x$, respectively.
Then the concatenation of $\zeta^+$ and $\zeta^-$ provides a timelike straight line.
\end{lemma}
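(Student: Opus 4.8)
The plan is to leverage the two support-function facts already in hand, namely Lemma~\ref{lm:GH2.5}\eqref{zeta2} and the reverse triangle inequalities \eqref{eq:GH2-6} and \eqref{eq:b+b}, to show that the concatenated curve $\zeta$ defined by $\zeta(s):=\zeta^+(s)$ for $s\ge 0$ and $\zeta(s):=\zeta^-(-s)$ for $s\le 0$ is globally maximizing. The hypothesis $\bb^{\eta}(x)+\ol{\bb}{}^{\eta}(x)=0$ is the crucial input; geometrically it says that $x$ lies on the ``achronal seam'' where the two Busemann functions meet, and it is exactly what forces the two asymptotes to line up into a geodesic without a corner.

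First I would record the behavior of $\bb^{\eta}$ along $\zeta^+$ and of $\ol{\bb}{}^{\eta}$ along $\zeta^-$. By Lemma~\ref{lm:GH2.5}\eqref{zeta2} applied to the ray $\eta|_{[0,\infty)}$ and its asymptote $\zeta^+$ we get $\bb^{\eta}(\zeta^+(t))=\bb^{\eta}(x)+t$ for all $t\ge 0$; applying the same lemma to $\rev{L}$, the ray $\bar{\eta}$ and its asymptote $\zeta^-$ gives $\ol{\bb}{}^{\eta}(\zeta^-(t))=\ol{\bb}{}^{\eta}(x)+t$ for all $t\ge 0$. Next, for $s\ge 0$ and $t\ge 0$ I would chain \eqref{eq:GH2-6} (for $\bb^{\eta}$, since $\zeta^-(s)\le x\le\zeta^+(t)$ along the causal curve $\zeta$) to obtain
\[
\bb^{\eta}\big(\zeta^+(t)\big)\ \ge\ \bb^{\eta}\big(\zeta^-(s)\big)+d\big(\zeta^-(s),\zeta^+(t)\big),
\]
and symmetrically the analogous inequality for $\ol{\bb}{}^{\eta}$ run backwards along the reverse structure, namely $\ol{\bb}{}^{\eta}(\zeta^-(s))\ge\ol{\bb}{}^{\eta}(\zeta^+(t))+d(\zeta^-(s),\zeta^+(t))$. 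Adding these two and using \eqref{eq:b+b} at the points $\zeta^+(t)$ and $\zeta^-(s)$ to bound the cross terms from below by $0$, together with the equality $\bb^{\eta}(x)+\ol{\bb}{}^{\eta}(x)=0$, collapses everything to
\[
d\big(\zeta^-(s),\zeta^+(t)\big)\ \le\ s+t\,=\,\ell\big(\zeta|_{[-s,t]}\big),
\]
while the reverse inequality $d(\zeta^-(s),\zeta^+(t))\ge\ell(\zeta|_{[-s,t]})$ is automatic because $\zeta|_{[-s,t]}$ is a causal curve joining those two points. Hence every segment of $\zeta$ is maximizing.

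It remains to upgrade ``every segment maximizing'' to ``$\zeta$ is a geodesic.'' This follows from the standard fact that a causal curve all of whose segments are maximizing is, after reparametrization, a causal geodesic, and since both $\zeta^+$ and $\zeta^-$ are unit-speed timelike geodesics the concatenation is already unit-speed and timelike on each half; the maximality just shown rules out a break at $s=0$, so $\zeta$ is a single unit-speed timelike geodesic with $d(\zeta(s),\zeta(t))=t-s$ for all $s<t$, i.e.\ a timelike straight line. The main obstacle is purely bookkeeping: one must be careful that the points involved in each invocation of \eqref{eq:GH2-6} and of its reverse counterpart are genuinely causally related along $\zeta$ (which they are, $\zeta^-(s)\le x\le\zeta^+(t)$), and that $x\in I(\eta)$ together with Lemma~\ref{lm:GH2.5}\eqref{zeta2} keeps all the Busemann values finite so the additions are legitimate; beyond that the argument is a short chase of reverse triangle inequalities, exactly as in \cite[Lemma~4.1]{Es}.
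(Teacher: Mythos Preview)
Your proof is correct and follows essentially the same approach as the paper, which merely cites \cite[Lemma~4.1]{Es} and points to \eqref{eq:GH2-6} and \eqref{eq:bzeta} as the ingredients; you have simply written out that argument in full. One minor remark: adding both inequalities is slightly redundant---a single application of \eqref{eq:GH2-6} together with \eqref{eq:b+b} at $\zeta^-(s)$ already yields $d(\zeta^-(s),\zeta^+(t))\le \bb^{\eta}(\zeta^+(t))-\bb^{\eta}(\zeta^-(s))\le (\bb^{\eta}(x)+t)+(\ol{\bb}{}^{\eta}(x)+s)=s+t$---but your symmetric version works just as well.
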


Precisely, $\zeta^-$ is an asymptote of $\bar{\eta}$ with respect to $\rev{L}$,
and the concatenation $\zeta$ is given by $\zeta(t):=\zeta^+(t)$ for $t \ge 0$
and $\zeta(t):=\zeta^-(-t)$ for $t<0$.
We say that such a straight line $\zeta$ is \emph{bi-asymptotic} to $\eta$.
We also observe from Lemma~\ref{lm:GH2.5} that
$\bb^{\zeta} +\ol{\bb}{}^{\zeta} \ge \bb^{\eta} +\ol{\bb}{}^{\eta}$ holds on $I(\zeta) \subset I(\eta)$
with equality on $\zeta$.

\section{A key step}\label{sc:max}

With the analysis of rays and Busemann functions in the previous sections in hand,
we shall generalize Eschenburg's argument \cite{Es} toward the splitting theorem.
Proposition~\ref{pr:max} below, which generalizes \cite[Proposition~6.1]{Es}, is a crucial step.
Recall that the spacetime Laplacian is not elliptic but hyperbolic.
Therefore we need to consider a spacelike hypersurface and the Laplacian restricted on it,
which is elliptic.
The metrizability will be assumed in Proposition~\ref{pr:max}
to overcome a difficulty arising in our Lorentz--Finsler situation.
For the sake of clarifying where we need the metrizability,
we do not intend to utilize it fully but minimize the use of it in our discussion.

\begin{remark}[Positive-definite case]\label{rm:posi}
In the positive-definite case,
the Laplacian comparison theorem implies the subharmonicity of the Busemann function
$\bb^{\eta}$, namely $\Lap \bb^{\eta} \ge 0$.
We similarly obtain the subharmonicity of $\ol{\bb}{}^{\eta}$
with respect to the reverse Finsler structure ($\rev{\Lap} \ol{\bb}{}^{\eta} \ge 0$),
and hence $\Lap \bb^{\eta} \ge -\rev{\Lap} \ol{\bb}{}^{\eta} =\Lap(-\ol{\bb}{}^{\eta})$.
Comparing this with $\bb^{\eta} +\ol{\bb}{}^{\eta} \le 0$ by the triangle inequality
and $\bb^{\eta} +\ol{\bb}{}^{\eta} \equiv 0$ on $\eta$,
we deduce from the strong comparison principle that
$\bb^{\eta} +\ol{\bb}{}^{\eta} \equiv 0$
as well as $\Lap \bb^{\eta} =\rev{\Lap} \ol{\bb}{}^{\eta} \equiv 0$.
We refer to \cite{Osplit} for details.
\end{remark}

We begin with a bound of the Hessian of the distance function.
Henceforth, we assume the Berwald condition.
Recall \eqref{eq:Hess} for the definition of the Hessian,
and Definition~\ref{df:Ber} for Berwald spacetimes.

\begin{lemma}[Hessian bound]\label{lm:GH4.3}
Let $(M,L)$ be a future timelike geodesically complete Berwald spacetime,
$\eta:[0,\infty) \lra M$ be a timelike ray of unit speed,
and take $x \in I(\eta)$ at where the GTCC for $\eta$ holds.
Then, given a small neighborhood $U$ of $x$ and $t_0>0$,
there exists a constant $C=C(U,t_0)>0$ such that,
for each timelike asymptote $\zeta:[0,\infty) \lra M$ of $\eta$ of unit speed
with $z:=\zeta(0) \in U$, we have
\begin{equation}\label{eq:GH4-1}
g_{\Grad d_t(z)} \big( \Grad^2 d_t(v),v \big) \ge -C \cdot g_{\dot{\zeta}(0)}(v^{\perp},v^{\perp})
\end{equation}
for all $v \in T_zM$ and $t \ge t_0$, where $d_t:=d(\cdot,\zeta(t))$ and
$v^\perp$ is the projection of $v$ to the $g_{\dot{\zeta}(0)}$-normal space to $\dot{\zeta}(0)$
in $T_zM$.
\end{lemma}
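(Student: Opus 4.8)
The plan is to fix a timelike asymptote $\zeta:[0,\infty) \lra M$ of $\eta$ of unit speed with $z = \zeta(0) \in U$, and to estimate the Hessian of $d_t = d(\cdot,\zeta(t))$ at $z$ from below by comparing with a suitable model. Since by Proposition~\ref{pr:GH3.9} (applied with the GTCC at $x$, and shrinking $U$ so that the GTCC holds throughout $U$ by its openness) there is a neighborhood of $\zeta([0,t])$ free of past timelike cut points of $\zeta(t)$, the function $d_t$ is smooth near $z$ and $-\td d_t(z) \in \Omega_z^*$, so $\Grad(-d_t)(z)$ and $\Grad^2 d_t$ are well-defined; moreover $\Grad(-d_t)(z) = \dot\zeta(0)$ because $\zeta$ is the maximizing geodesic realizing $d_t$ near $z$. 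First I would set up the Jacobi-field description of $\Grad^2 d_t(v)$: for $v \in T_zM$, extend it to a Jacobi field $J$ along the reversed geodesic $s \mapsto \zeta(t-s)$ (a geodesic for $\rev{L}$) with $J(0) = v$ and $J(t) = 0$, so that $g_{\dot\zeta(0)}(\Grad^2 d_t(v), v)$ equals the value at $s=0$ of the index form / second-variation expression $g(D_sJ, J)$. This is the Berwald-specific point where Remark~\ref{rm:Hess} matters: on a Berwald spacetime the second derivative of $d_t$ along a geodesic agrees with the Hessian quadratic form, so the Jacobi-field calculus behaves as in the Lorentzian case.

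Next, because only the $g_{\dot\zeta(0)}$-normal part $v^\perp$ contributes to the Hessian (the radial direction $\dot\zeta(0)$ lies in the kernel, by $\Grad^2 d_t(\dot\zeta(0)) = 0$ and \eqref{eq:symm}), it suffices to bound $g(D_sJ,J)|_{s=0}$ from below for the Jacobi field with $J(0)=v^\perp$, $J(t)=0$. The standard device is to insert a comparison vector field $W$ along $\zeta|_{[0,t]}$ with $W(0) = v^\perp$ and $W(t_0)=\cdots=0$ for $s \ge t_0$, say $W(s) = \phi(s)\,E(s)$ with $E$ a parallel normal field, $E(0)=v^\perp$, and $\phi$ a fixed cutoff with $\phi(0)=1$, $\phi \equiv 0$ on $[t_0,t]$; since the Jacobi field minimizes the index form among fields with the same boundary values on $[0,t]$, we get
\[
g_{\dot\zeta(0)}\big(\Grad^2 d_t(v),v\big) \;\ge\; -\,I_0^{t_0}(W,W),
\]
and the right-hand side is $\int_0^{t_0}\big( g(D_sW,D_sW) - g(R_{\dot\zeta}(W),W)\big)\,\td s$, which depends only on $\phi$, on the curvature of $L$ along $\zeta|_{[0,t_0]}$, and on $|v^\perp|_{g_{\dot\zeta(0)}}$. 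The remaining task is to make this bound uniform over all asymptotes $\zeta$ with $z \in U$: here one uses that $\dot\zeta(0)$ ranges over a fixed compact set $K \subset \Omega \cap F^{-1}(1)$ (the compactness statement recalled just before Lemma~\ref{lm:asymp}), that the geodesic flow is smooth on the Berwald spacetime (Subsection~\ref{ssc:curv}), and that $\zeta|_{[0,t_0]}$ therefore stays in a fixed compact subset of $M$ on which the curvature tensor $R$ is bounded; so the index form $I_0^{t_0}(W,W)$ is bounded above by $C \cdot g_{\dot\zeta(0)}(v^\perp, v^\perp)$ with $C = C(U,t_0)$ independent of $\zeta$, $z$, $v$, and $t \ge t_0$. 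This yields \eqref{eq:GH4-1}.

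I expect the main obstacle to be the uniformity argument rather than the Jacobi-field estimate itself: one must be careful that parallel transport of $v^\perp$ along $\zeta$, the normalization of the cutoff $\phi$, and the curvature bound are all controlled simultaneously over the whole family of asymptotes, using only that $z \in U$ (so that $\dot\zeta(0) \in K$ and $\zeta|_{[0,t_0]}$ lies in a compact set) and not any a priori control on $t$. A secondary subtlety is the legitimacy of the index-form comparison when $t$ is large: this is exactly where the absence of past timelike cut points of $\zeta(t)$ near $\zeta([0,t])$ from Proposition~\ref{pr:GH3.9}, together with $t \ge t_0$, guarantees that $\zeta|_{[0,t]}$ is free of conjugate points and the minimizing property of the Jacobi field (hence the inequality) holds on all of $[0,t]$.
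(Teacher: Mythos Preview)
Your proposal is correct, but it takes a genuinely different route from the paper's proof. You follow the classical Galloway--Horta argument via the index form: express $g_{\dot\zeta(0)}(\Grad^2 d_t(v^\perp),v^\perp)$ as $-I_0^t(J,J)$ for the Jacobi field $J$ with $J(0)=v^\perp$, $J(t)=0$, invoke the index lemma (Jacobi minimizes $I$ on the normal bundle, no conjugate points along the ray) to replace $J$ by a test field $W=\phi E$ supported on $[0,t_0]$, and then bound $I_0^{t_0}(W,W)$ uniformly using compactness of the set of initial vectors $\dot\zeta(0)$ and boundedness of the curvature over the resulting compact image of $[0,t_0]$ under the geodesic flow. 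This is sound, and your identification of the Berwald condition as what makes \eqref{eq:Hd} available is exactly right.

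The paper instead argues much more directly: after the same reduction to $v=v^\perp$ and the same compactness of $\{\dot\zeta(0)\}$, it obtains the bound at the single value $t=t_0$ just from smoothness of $d_{t_0}$, and then shows that $g_{\dot\zeta(0)}(\Grad^2 d_t(v),v)=(d_t\circ\xi_v)''(0)$ is \emph{non-decreasing in $t$} by the reverse triangle inequality: $d_t(\xi_v(r))\ge d_s(\xi_v(r))+(t-s)$ with equality at $r=0$ and matching first derivatives, so $(d_t\circ\xi_v)''(0)\ge(d_s\circ\xi_v)''(0)$. This monotonicity trick bypasses the index-form machinery entirely; no test field, no curvature bound along $\zeta|_{[0,t_0]}$, and no index lemma are needed. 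What it buys is brevity and a proof that stays at the level of the distance function; what your approach buys is an explicit quantitative expression for $C$ in terms of the curvature on a fixed compact set, and closer parallelism with \cite{GH}.
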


We take $U$ on which the GTCC for $\eta$ holds.
Note that $-d_t$ is a temporal function and $C^{\infty}$ on a neighborhood of $z$
thanks to Proposition~\ref{pr:GH3.9}, thereby $\Grad^2 d_t(v)$ is well-defined.
Observe also that $g_{\dot{\zeta}(0)}(v^{\perp},v^{\perp}) \ge 0$ and
$v^{\perp}=v+g_{\dot{\zeta}(0)}(\dot{\zeta}(0),v)\dot{\zeta}(0)$
since
\[ g_{\dot{\zeta}(0)}\Big( \dot{\zeta}(0),v+g_{\dot{\zeta}(0)}\big( \dot{\zeta}(0),v \big) \dot{\zeta}(0) \Big)
 =g_{\dot{\zeta}(0)}\big( \dot{\zeta}(0),v \big)
 \big\{ 1+g_{\dot{\zeta}(0)} \big( \dot{\zeta}(0),\dot{\zeta}(0) \big) \big\} =0 \]
by $g_{\dot{\zeta}(0)}(\dot{\zeta}(0),\dot{\zeta}(0)) =2L(\dot{\zeta}(0)) =-1$.

\begin{proof}
Note that $(v,w) \longmapsto g_{\Grad d_t(z)} (\Grad^2 d_t(v),w)$ is a quadratic form
by the symmetry \eqref{eq:symm} and that
\[ \Grad^2 d_t \big( \dot{\zeta}(0) \big) =D^{ \dot{\zeta}}_{ \dot{\zeta}} \dot{\zeta}(0) =0 \]
since $\Grad d_t(\zeta(s))=\dot{\zeta}(s)$ for $s \in [0,t)$.
Hence it is sufficient to show \eqref{eq:GH4-1} for all $v$ in the
$g_{\dot{\zeta}(0)}$-normal space to $\dot{\zeta}(0)$ (i.e., $v=v^{\perp}$).

As we mentioned before Lemma~\ref{lm:asymp},
the set of initial vectors of timelike asymptotes of unit speed
emanating from a (small) neighborhood $U$
is included in a compact set of unit timelike vectors (see \cite[Corollary~3.5]{GH} as well).
Therefore, given $t_0>0$, thanks to the smoothness of $d_{t_0}$,
we find $C>0$ such that \eqref{eq:GH4-1} holds at $t=t_0$.

For extending \eqref{eq:GH4-1} to $t>t_0$,
it suffices to show that $g_{\Grad d_t(z)} (\Grad^2 d_t(v),v)$ is non-decreasing in $t$.
To this end, recall that
\begin{equation}\label{eq:Hd}
g_{\Grad d_t(z)} \big( \Grad^2 d_t(v),v \big) =(d_t \circ \xi_v)''(0)
\end{equation}
holds for the geodesic $\xi_v:(-\delta,\delta) \lra M$ with $\dot{\xi}_v(0)=v$
(see Remark~\ref{rm:Hess}).
Observe from the reverse triangle inequality that
\[ d_t \big( \xi_v(r) \big) \ge d_s \big( \xi_v(r) \big) +d\big( \zeta(s),\zeta(t) \big)
 =d_s \big( \xi_v(r) \big) +t-s \]
for small $r>0$ and $t_0 \le s<t$, while $d_t(z)=d_s(z)+t-s$.
Combining these with
\[ (d_t \circ \xi_v)'(0) =(d_s \circ \xi_v)'(0) =g_{\dot{\zeta}(0)} \big( \dot{\zeta}(0),v \big) \]
from the first variation formula,
we find that $d_t$ is more convex than $d_s$ at $z$.
Therefore we have
\[ (d_t \circ \xi_v)''(0) \ge (d_s \circ \xi_v)''(0), \]
which completes the proof by \eqref{eq:Hd}.
$\qedd$
\end{proof}

We are ready to show the main result in this section.
Proposition~\ref{pr:GH5.1} will play a crucial role (in place of \cite[Lemma~3.4]{Es}).

\begin{proposition}[Key step]\label{pr:max}
Let $(M,L,\Psi)$ be a weighted, timelike geodesically complete Berwald spacetime
of $\Ric_N \ge 0$ on $\Omega$ for some $N \in (-\infty,0) \cup [n,+\infty]$.
Suppose also that
\begin{enumerate}[$(1)$]
\item\label{max-cplt}
the $\ez$-completeness for some $\ez$ in the corresponding $\ez$-range \eqref{eq:erange} holds,
\item\label{max-line}
there is a timelike straight line $\eta:\R \lra M$,
\item\label{max-met}
there is a timelike geodesically complete Lorentzian structure $g$ of $M$ whose Levi-Civita connection
coincides with the Chern connection of $L$ such that $\eta$ is timelike for $g$,
\item\label{max-cover}
in the universal cover $(\wt{M},\tilde{g})$ of $(M,g)$,
a lift of $\eta$ is a timelike straight line.
\end{enumerate}
Then there exists a neighborhood $W$ of $\eta(\R)$ on which we have
$\bb^{\eta}+\ol{\bb}{}^{\eta}=0$.
\end{proposition}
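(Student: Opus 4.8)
The plan is to show that the function $h := \bb^{\eta}+\ol{\bb}{}^{\eta}$, which by \eqref{eq:b+b} satisfies $h \ge 0$ everywhere on $I(\eta)$ and $h \equiv 0$ on $\eta$, actually vanishes on a whole neighborhood of $\eta(\R)$. By Proposition~\ref{pr:GH5.1} the GTCC for $\eta$ (and, applied to the reverse structure, the GTCC for $\bar{\eta}$) holds on an open set containing $\eta(\R)$, so by Theorem~\ref{th:GH3.7} both $\bb^{\eta}$ and $\ol{\bb}{}^{\eta}$ are Lipschitz there, and by Lemma~\ref{lm:asymp} timelike asymptotes of $\eta$ and $\bar{\eta}$ exist from every nearby point. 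The strategy, following Eschenburg, is: pick a point $z$ near $\eta$, build the upper support function $\rho(x)=\bb^{\eta}(z)+t-d(x,\zeta(t))$ from a timelike asymptote $\zeta$ of $\eta$ (Lemma~\ref{lm:GH2.5}) and, symmetrically, a lower support function for $-\ol{\bb}{}^{\eta}$ from a timelike asymptote $\bar\zeta$ of $\bar\eta$. On the smooth distance functions $d(\cdot,\zeta(t))$ and $d(\bar\zeta(-s),\cdot)$ (smoothness near $z$ coming from Proposition~\ref{pr:GH3.9}) the Laplacian comparison theorem (Theorem~\ref{th:Lcomp} and Corollary~\ref{cr:Lcomp}), using $\Ric_N\ge 0$, $\ez$-completeness, and letting $t,s\to\infty$, gives $\Lap^\Psi(-\rho) \le 0$ in the barrier sense for $\bb^{\eta}$ and an analogous one-sided bound for $\ol{\bb}{}^{\eta}$ with respect to $\rev{L}$.

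Since the spacetime $\Psi$-Laplacian is hyperbolic rather than elliptic, the comparison cannot be applied directly; this is where hypotheses \eqref{max-met} and \eqref{max-cover} enter. The plan is to restrict attention to a suitable spacelike hypersurface $\Sigma_0$ through a point $p\in\eta$ transverse to $\dot\eta(p)$ (spacelike for both $L$ and the auxiliary $g$), and to show that the restriction $h|_{\Sigma_0}$ is a nonnegative function vanishing at $p$ which is a supersolution of the \emph{elliptic} operator obtained by restricting the weighted Laplacian to $\Sigma_0$. Concretely, combining the barrier estimate above with the Hessian bound of Lemma~\ref{lm:GH4.3} — which controls $\Grad^2 d_t$ from below on the $g_{\dot\zeta(0)}$-normal directions by a constant independent of $t$ — one obtains that along directions tangent to $\Sigma_0$ the function $h$ satisfies a differential inequality of the form $\Delta_{\Sigma_0}^{\Psi} h \ge -C\,h$ in the support (barrier) sense, where $\Delta_{\Sigma_0}^{\Psi}$ is a uniformly elliptic second-order operator with the zeroth-order term having a favourable sign; the Lorentzian metric $g$ and the coincidence of its Levi-Civita connection with the Chern connection are used to make sense of this induced elliptic operator and to relate the barrier inequalities for $L$ and $\rev L$ consistently in a single framework, while hypothesis \eqref{max-cover} is used to pass to the universal cover so that the relevant distance functions and Busemann functions are globally well-behaved and the straight line is genuinely a line upstairs.

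With the elliptic inequality $\Delta_{\Sigma_0}^{\Psi} h \ge -Ch$ (equivalently, $h$ is a nonnegative supersolution of a uniformly elliptic operator of the form $\Delta_{\Sigma_0}^{\Psi}+C$) together with $h(p)=0$, the strong maximum principle (E.\ Hopf) forces $h\equiv 0$ on a connected neighborhood of $p$ in $\Sigma_0$. Varying $p$ along $\eta(\R)$ and the hypersurface $\Sigma_0$, and using that the exceptional set where $h=0$ is both open (by the maximum principle argument just made at each of its points, since the GTCC persists by Proposition~\ref{pr:GH5.1}) and closed (by upper semicontinuity of $\bb^{\eta}$, $\ol{\bb}{}^{\eta}$ and $h\ge 0$), one concludes $\bb^{\eta}+\ol{\bb}{}^{\eta}=0$ on an open set $W\supset\eta(\R)$. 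The main obstacle I anticipate is precisely the transition from the intrinsically hyperbolic barrier inequalities for $-d_t$ to a genuinely elliptic differential inequality for $h$ restricted to a spacelike slice: one must check that the restriction of the weighted Hessian to $\Sigma_0$ controls the intrinsic Hessian of $h|_{\Sigma_0}$ with the right sign (the extrinsic/second-fundamental-form terms must be handled via the Hessian bound of Lemma~\ref{lm:GH4.3} and the Berwald identity of Remark~\ref{rm:Hess}), and that the weight term $-\td\Psi(\Grad(-h))$ behaves controllably along $\Sigma_0$; the auxiliary metric $g$ is the device that lets this be done uniformly, and verifying that the induced operator is uniformly elliptic with the stated structure is the technical heart of the argument.
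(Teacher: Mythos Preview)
Your outline follows Eschenburg's elliptic strong maximum principle approach, but this is precisely the route the paper \emph{abandons}; see Remark~\ref{rm:max}. The central obstruction is the one you flag as ``the main obstacle I anticipate'' and do not resolve: in the Lorentz--Finsler setting the Laplacian is nonlinear, so the barrier inequality for $\bb^{\eta}$ is with respect to the metric $g_{\Grad(-\rho)}$ while the one for $\ol{\bb}{}^{\eta}$ is with respect to $g_{\Grad\bar{\rho}}$. These are \emph{different} Lorentzian metrics unless one already knows that $\dot{\zeta}^+(0)=-\dot{\zeta}^-(0)$ (equivalently \eqref{eq:rho}), and without that alignment there is no way to add the two one-sided bounds to produce an elliptic inequality of the form $\Delta_{\Sigma_0}^{\Psi} h \ge -Ch$ for a single second-order operator. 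Your sentence that the metric $g$ lets one ``relate the barrier inequalities for $L$ and $\rev L$ consistently in a single framework'' is hope, not an argument; the paper states explicitly that it does not know how to estimate $\Delta^{\Grad\bar{\rho}}\rho$, and the Hessian bound of Lemma~\ref{lm:GH4.3} does not help here because it bounds each support Hessian separately in its own metric.

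The paper's actual use of hypotheses \eqref{max-met} and \eqref{max-cover} is completely different and much less analytic. Since $(M,g,\Psi)$ is a genuine weighted Lorentzian manifold sharing geodesics with $(M,L)$, and since asymptotes of $\eta$ stay close to $\dot{\eta}$ (Proposition~\ref{pr:GH5.1}) hence remain $g$-timelike with $\Ric_N \ge 0$ along them in $(M,g,\Psi)$, one can invoke the \emph{known} Lorentzian splitting theorem of \cite{WW2} for $(M,g,\Psi)$ (or for the universal cover when $\eta$ is not $g$-maximizing, which is the sole role of \eqref{max-cover}). The resulting local product structure of $g$, transported to $L$ via the Berwald property, forces the forward and backward asymptotes through $z$ to concatenate into a line parallel to $\eta$, yielding \eqref{eq:rho} directly; this then contradicts the barrier construction of Steps~\ref{step1}--\ref{step3}. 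In short, the metrizability is not used to build an elliptic operator but to import the Lorentzian result as a black box, bypassing the nonlinearity problem altogether.
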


\begin{proof}
We will suppress $\eta$ in $\bb^{\eta}$ and $\ol{\bb}{}^{\eta}$.
The proof is divided into five steps.
We follow the lines of \cite[Proposition~6.1]{Es} as far as possible,
and the metrizability \eqref{max-met} comes into play in the fourth step.

\begin{step}\label{step1}
Observe first that $\bb$ and $\ol{\bb}$ are continuous on a neighborhood $W_0$ of $\eta(\R)$
by Theorem~\ref{th:GH3.7} and Proposition~\ref{pr:GH5.1}.
Now, for deriving a contradiction, suppose that there is no neighborhood $W$ satisfying the claim.
Then we find that the open set
\[ Q:=\{ x \in W_0 \,|\, \bb(x)+\ol{\bb}(x)>0 \} \]
has a boundary point on $\eta(\R)$, denoted by $x_0$
(recall that $\bb+\ol{\bb} \ge 0$ by \eqref{eq:b+b}).
We may assume by translation that $\eta(0)=x_0$.
Moreover, we can take a small coordinate domain $B$ in $Q$ such that $x_0 \in \del B$
(possibly replacing $\eta$ with a timelike straight line given by Lemma~\ref{lm:line};
see \cite[Proposition~6.1]{Es} for details).

We take Fermi(--Walker) coordinates $(x^{\alpha})_{\alpha=0}^n$
on a sufficiently small tubular neighborhood $U$ around $\eta([-R,R])$,
where $R \in (0,1/3]$. 
Precisely, given $g_{\dot{\eta}}$-parallel vector fields
$P_{\alpha}$ ($\alpha=0,1,\ldots,n$) along $\eta$ such that
$\{P_{\alpha}(0)\}_{\alpha=0}^n$ is $g_{\dot{\eta}}$-orthonormal with $P_0=\dot{\eta}$,
we consider a local coordinate system given by
\[ (x^{\alpha})_{\alpha=0}^n \,\longmapsto\,
 \exp_{\eta(x^0)} \Bigg( \sum_{i=1}^n x^i P_i(x^0) \Bigg),
 \qquad |x^0|<R,\,\ \sum_{i=1}^n (x^i)^2 <R^2. \]
We remark that the exponential map is $C^{\infty}$ on a neighborhood of the zero section
by the Berwald condition.

By taking smaller $B$ if necessary, $B \cap U$ may be represented as
\[ B \cap U =\Bigg\{ (x^{\alpha})_{\alpha=0}^n \in U
 \,\Bigg|\, \sum_{\alpha=0}^n (x^{\alpha})^2 -6R x^n <0 \Bigg\} \]
in the coordinates above. 
Now we consider the set 
\[ \cA :=\Bigg\{ (x^\alpha)_{\alpha=0}^n \in U \,\Bigg|\,
 \sum_{i=1}^n (x^i)^2 > (1+2R)(x^0)^2 \Bigg\} \]
including the spacelike hypersurface $\{x^0=0\} \setminus \{x_0\}$.
Note that the Fermi coordinates enjoy $\Gamma^{\alpha}_{\beta \delta}(\dot{\eta})=0$ along $\eta$
(see, e.g., \cite[Proposition~3.3]{Min-coord} in a more general non-Berwald situation).
\end{step}

\begin{step}\label{step2}
We consider the flat Minkowski structure
\[ g_0 = -(\td x^0)^2 +(\td x^1)^2 +\cdots +(\td x^n)^2 \]
associated with the Fermi coordinates $(x^{\alpha})_{\alpha=0}^n$ above.
Note that $g_0=g_{\del/\del x^0}$ along $\eta$.

We can construct a smooth function $\vartheta$ on $U$ satisfying the following properties:
\begin{enumerate}[(a)]
\item\label{h-a} $\vartheta(x_0)=0$;
\item\label{h-b} $\vartheta >0$ on $\cA \setminus B$.
\end{enumerate}
This is done along the lines of \cite[Lemma~6.2]{Es} as follows.
We first consider a function
\[ \phi(x)=\frac{1}{R} \sum_{i=1}^{n-1} (x^i)^2 -x^n -(x^0)^2 \]
in the above coordinates,
and define $\vartheta:=1-\e^{-\sigma \phi}$ for large $\sigma>0$.
Then \eqref{h-a} is clear and \eqref{h-b} is seen by
the argument of Claim in the proof of \cite[Lemma~6.2]{Es} as follows.

It suffices to show that $x \in \cA$ satisfying $\phi(x) \le 0$ necessarily lies in $B$.
We deduce from $x \in \cA$ and $\phi(x) \le 0$ that
\[ (1+2R)(x^0)^2 <R\big( x^n +(x^0)^2 \big) +(x^n)^2, \]
which implies
\[ \bigg( x^n +\frac{R}{2} \bigg)^2 -\frac{R^2}{4} >(1+R)(x^0)^2 \ge 0. \]
Noticing $x^n >-R$, we have $x^n>0$ and
\[ x^n > \sqrt{(1+R)(x^0)^2 +\frac{R^2}{4}} -\frac{R}{2}
 =\frac{R}{2} \Bigg( \sqrt{\frac{4(1+R)}{R^2}(x^0)^2 +1} -1 \Bigg). \]
Since $4(1+R)(x^0)^2/R^2 <4(1+R)<8$
and the slope of the function $\sqrt{s}$ is larger than $1/6$ for $s \in (1,9)$,
we observe
\[ \sqrt{\frac{4(1+R)}{R^2}(x^0)^2 +1} > 1+\frac{1}{6} \frac{4(1+R)}{R^2}(x^0)^2. \]
Thus we have
\[ x^n >\frac{1+R}{3R} (x^0)^2,  \]
and hence $x \in B$ since
\[ \sum_{\alpha=0}^n (x^{\alpha})^2 -6R x^n
 \le (1+R)(x^0)^2 +(x^n)^2 -5Rx^n
 < (x^n)^2 -2Rx^n <0. \]
\end{step}

\begin{step}\label{step3}
We put
\[ U_r :=\Bigg\{ \exp_{\eta(x^0)} \Bigg( \sum_{i=1}^n x^i P_i(x^0) \Bigg) \,\Bigg|\,
 |x^0|<R,\ \sum_{i=1}^n (x^i)^2 <r^2 \Bigg\} \subset U \]
for $r \in (0,R]$ and $S_r:=(\del U_r) \cap \{\bb \le 0\}$.
Then $\ol{\bb} \ge 0$ on $S_r$ by \eqref{eq:b+b}.
Moreover, taking smaller $r$ if necessary, we have
\[ S_r \subset \{\ol{\bb}>0\} \cup \{ \vartheta>0 \} \]
since $\bb<0$ on $S_r \setminus \cA$ (by the first variation formula for $\bb^{\eta}_t$ for large $t$),
$\vartheta>0$ on $\cA \setminus B$ by \eqref{h-b},
and $\ol{\bb}>0$ on $B \cap \{\bb \le 0\}$.

This implies that $\inf_{S_r \cap \{\vartheta \le 0\}} \ol{\bb}>0$, therefore,
for small $\tau>0$ (depending on $r$), $f:=\ol{\bb} +\tau \vartheta$ is positive on $S_r$.
Set $U^-_r :=U_r \cap \{\bb \le 0\}$ and observe that
\[ \del U^-_r \subset S_r \cup \Big( \overset{\circ}{U}_r \cap \bb^{-1}(0) \Big). \]
Since $f(x_0)=0$ by \eqref{h-a} and $f|_{S_r} >0$, $f|_{U^-_r}$ takes its minimum value at some point
\[ z \in \overset{\circ}{U}{}^-_r \cup \Big( \overset{\circ}{U}_r \cap \bb^{-1}(0) \Big). \]
In fact, by construction, we find $\td f(z) \neq 0$ (for small $\tau$)
and hence $z \in \overset{\circ}{U}_r \cap \bb^{-1}(0)$.

For large $s>0$,
let $\zeta^-:[0,\infty) \lra M$ be an asymptote of $\bar{\eta}(t)=\eta(-t)$ of unit speed
emanating from $z$ with respect to $\rev{L}$, and we consider
\[ \bar{\rho}(x) :=\ol{\bb}(z) +s -d\big( \zeta^-(s),x \big), \]
which is an upper support function for $\ol{\bb}$ at $z$ in the same manner as \eqref{eq:GH2-7}.
Then,
\[ f_s:=\bar{\rho} +\tau \vartheta \]
is an upper support function for $f$ at $z$
and $\min_{U^-_r} f_s$ is attained again at $z$.

Next, let $\zeta^+$ be an asymptote of $\eta|_{[0,\infty)}$ of unit speed
with $\zeta^+(0)=z$ and put
\[ \rho(x) :=s -d\big( x,\zeta^+(s) \big), \]
which is an upper support function for $\bb$ at $z$ by Lemma~\ref{lm:GH2.5}.
Then the level surface $H_s:=\rho^{-1}(0) \cap U_r$ passing through $z$
is a smooth $g_{\Grad(-\rho)}$-spacelike hypersurface.
Since $\bb \le \rho =0$ on $H_s$, we have $H_s \subset U^-_r$.
Hence $f_s|_{H_s}$ takes its minimum at $z$ and, since $\td f_s(z) \neq 0$,
we have $\td f_s (z) =-\Lambda \,\td\rho(z)$ for some $\Lambda >0$ close to $1$
(both are close to $-\td x^0$ by virtue of Proposition~\ref{pr:GH5.1}).
\end{step}

\begin{step}\label{step4}
Now, we make use of the metrizability \eqref{max-met}.
Note that geodesics are common between $L$ and $g$.
In this step we consider the case where $\eta$ is a timelike straight line also for $g$.
Since asymptotes of $\eta$ with respect to $g$
has tangent vectors close to $\dot{\eta}$ by Proposition~\ref{pr:GH5.1},
they are timelike also for $L$ and $\Ric_N \ge 0$ holds along them in $(M,g,\Psi)$.
Thus we can apply the splitting theorem in \cite{WW2}
to obtain a local isometric splitting
$(\widehat{W},g)=(\R \times \widehat{\Sigma},-\td t^2 +\hat{h})$
of a neighborhood $\widehat{W}$ of $\eta$,
and $\Psi$ is constant on each line $t \longmapsto (t,x)$.
Note that the geodesic equation on $\widehat{W}$ is decomposed into
those on $\R$ and $\widehat{\Sigma}$.

Since parallel transports preserve $L$ in Berwald spacetimes
(see, e.g., \cite[Proposition~10.1.1]{BCS}),
$(\widehat{W},L)$ inherits the product structure of $(\widehat{W},g)$
(whereas $\widehat{\Sigma}$ may not be spacelike for $g_{\dot{\eta}}$ or $L$).
Precisely, taking smaller $\widehat{W}$ including $\eta$ if necessary,
we have a modified splitting $\widehat{W}=\R \times \Sigma$
such that the map $(s,x) \longmapsto (s+t,x)$ preserves $L$
for all $(s,x) \in \R \times \Sigma$ and $t \in \R$
and that $\Sigma$ is spacelike and perpendicular to $V$ with respect to $g_V$,
where $V$ is the parallel vector field extending $\dot{\eta}$.

In this product structure,
we find that the asymptotes $\zeta^-$ and $\zeta^+$ (for $L$) together
form a straight line parallel to $\eta$.
This implies that $\dot{\zeta}^+(0)=-\dot{\zeta}^-(0)$, and hence
\begin{equation}\label{eq:rho}
\td\bar{\rho}(z) =-\td\rho(z).
\end{equation}
Then we have
\[ \tau \,\td\vartheta(z) =\td f_s(z) -\td\bar{\rho}(z) =(1-\Lambda) \td\rho(z). \]
This is, however, a contradiction since $\td\vartheta(z)$ is close to
$-\sigma \,\td x^n$ by construction.
Therefore, $\bb+\ol{\bb}=0$ holds on a neighborhood of $\eta$.
\end{step}

\begin{step}\label{step5}
Finally, we consider the remaining case where $\eta$ is not a straight line for $g$.
Then we need the hypothesis \eqref{max-cover}.
Denote by $\wt{L}$ the Lorentz--Finsler structure of $\wt{M}$ induced from $L$.
Since a lift $\tilde{\eta}:\R \lra \wt{M}$ of $\eta$ is a timelike straight line for $\tilde{g}$
by \eqref{max-cover},
we can split $(\wt{M},\tilde{g},\wt{\Psi})$ and the Busemann functions
$\bb^{\tilde{\eta}}$ and $\ol{\bb}{}^{\tilde{\eta}}$
with respect to $\wt{L}$ satisfies $\bb^{\tilde{\eta}}+\ol{\bb}{}^{\tilde{\eta}}=0$
on a neighborhood of $\tilde{\eta}$ by the above argument.

Let $\pi:\wt{M} \lra M$ be the projection and observe that
\[ d_{\wt{L}} \big( p,\tilde{\eta}(t) \big) \le d_L\big( \pi(p),\eta(t) \big) \]
for $p \in \wt{M}$ close to $\tilde{\eta}$ and large $t>0$.
Therefore we obtain
\[ 0 \le (\bb+\ol{\bb}) \circ \pi \le \bb^{\tilde{\eta}}+\ol{\bb}{}^{\tilde{\eta}}=0, \]
which implies that $\bb+\ol{\bb}=0$ holds on a neighborhood of $\eta$.
This completes the proof.
$\qedd$
\end{step}
\end{proof}

A typical situation where $\eta$ is not a straight line for $g$ is that $M$ is a cylinder:
\[ (M,g)=\big( \R \times (\R/\Z),-\td t^2 + \td s^2 \big), \qquad \eta(t)=(t,ct) \]
for small $c>0$.
By modifying $g$, we can construct a flat Lorentzian metric
for which $\eta$ is a timelike straight line.
Two more remarks on the above proof are in order.

\begin{remark}\label{rm:max}
The above proof relying on the splitting theorem of $g$ is obviously not a standard one.
The strategy in \cite{Es} is to show that the Laplacian of $f_s|_{H_s}$ is negative at $z$
by using the Laplacian comparison theorem (Theorem~\ref{th:Lcomp}, Corollary~\ref{cr:Lcomp})
and the Hessian bound (Lemma~\ref{lm:GH4.3}).
One can follow this line to some extent by using the linearized Laplacian
$\Delta\!^{\Grad\bar{\rho}}$ with respect to $g_{\Grad\bar{\rho}}$
and fine properties of the Fermi coordinates.
In general, however, there is a difficulty arising from the absence of \eqref{eq:rho}
(precisely, we do not know how to estimate $\Delta\!^{\Grad\bar{\rho}} \rho$).
This is the reason why we assumed the metrizability
and digressed from \cite{Es} in Steps~\ref{step4} and \ref{step5}.
\end{remark}

\begin{remark}[The case of $\psi_{\fm}$]\label{rm:psi_m}
When we consider the weight function $\psi_{\fm}$ associated with a measure $\fm$ on $M$
as in Remark~\ref{rm:meas}, along any geodesic $\xi$,
$\vol_{g_{\dot{\xi}}}$ is a constant multiplication of $\vol_g$
since parallel transports preserve $L$ in Berwald spacetimes.
Hence $\Ric_N$ for $(M,L,\fm)$ coincides with that for $(M,g,\Psi_{\fm})$,
where $\Psi_{\fm} \in C^{\infty}(M)$ is given by $\fm=\e^{-\Psi_{\fm}} \,\vol_g$,
and we can apply the above proof with $(M,g,\Psi_{\fm})$.
\end{remark}

\section{Proof of the splitting theorem}\label{sc:split}

This last section is devoted to the proof of the main theorem.
We need to recall some standard concepts in Lorentzian geometry
and their counterparts in the weighted setting;
see \cite{LMO,LMO2} for details.

Let $\zeta:[0,l) \lra M$ be a timelike geodesic of unit speed
and denote by $N_{\zeta}(t) \subset T_{\zeta(t)}M$ the $n$-dimensional subspace
$g_{\dot{\zeta}(t)}$-orthogonal to $\dot{\zeta}(t)$.
Given a \emph{Lagrange tensor field} $\sJ(t):N_{\zeta}(t) \lra N_{\zeta}(t)$ along $\zeta$,
we set $\sB:=\sJ' \sJ^{-1}$.
Then we have the \emph{Riccati equation}
\begin{equation}\label{eq:Ricc}
\sB' +\sB^2 +\sR=0,
\end{equation}
where $\sR(t):=R_{\dot{\zeta}(t)}:N_{\zeta}(t) \lra N_{\zeta}(t)$ is the curvature endomorphism.

Taking the weight function $\Psi$ on $M$ into account, we define
\begin{equation}\label{eq:B_e}
\sB_{\ez}(t) :=\e^{\frac{2(1-\ez)}{n} \Psi(\zeta(t))} \bigg(
 \sB(t) -\frac{(\Psi \circ \zeta)'(t)}{n} \sI_n(t) \bigg),
\end{equation}
where $\sI_n(t):N_{\zeta}(t) \lra N_{\zeta}(t)$ is the identity.
We also define
\begin{align*}
\theta_{\ez}(t) &:=\trace\big( \sB_{\ez}(t) \big)
 =\e^{\frac{2(1-\ez)}{n}\Psi(\zeta(t))} \Big\{ \trace\big( \sB(t) \big) -(\Psi \circ \zeta)'(t) \Big\}, \\
 \sigma_{\ez}(t) &:=\sB_{\ez}(t) -\frac{\theta_{\ez}(t)}{n} \sI_n(t)
\end{align*}
as the \emph{expansion scalar} and the \emph{shear tensor}.
Then, for $N \in (-\infty,0) \cup [n,+\infty]$,
it was established in \cite[Theorem~5.6]{LMO}
the following \emph{timelike weighted Raychaudhuri equation}:
\begin{equation}\label{eq:Rayeq}
\theta_{\ez}^* +c\theta_{\ez}^2
 +\frac{N(N-n)}{n} \bigg( \frac{\ez \theta_{\ez}}{N} +\frac{(\Psi \circ \zeta)^*}{N-n} \bigg)^2
 +\trace(\sigma_{\ez}^2) +\Ric_N(\zeta^*) =0,
\end{equation}
where $c=c(N,\ez)>0$ in \eqref{eq:LF-c},
\begin{align*}
\varphi_{\zeta}(t)
&:= \int_0^t \e^{\frac{2(\ez-1)}{n} \Psi(\zeta(s))} \,\td s, \\
\theta_{\ez}^*(t)
&:= (\theta_{\ez} \circ \varphi_{\zeta}^{-1})' \big( \varphi_{\zeta}(t) \big)
 =\e^{\frac{2(1-\ez)}{n} \Psi(\zeta(t))} \theta'_{\ez}(t),
\end{align*}
and similarly
$(\Psi \circ \zeta)^*(t) :=\e^{\frac{2(1-\ez)}{n} \Psi(\zeta(t))} (\Psi \circ \zeta)'(t)$
and $\zeta^*(t):=\e^{\frac{2(1-\ez)}{n} \Psi(\zeta(t))} \dot{\zeta}(t)$.

\subsection{Local splitting}\label{ssc:local}

We first build a local splitting on top of Proposition~\ref{pr:max}.

\begin{proposition}[Local isometric splitting]\label{pr:Lsplit}
Let $(M,L,\Psi)$ be a weighted Berwald spacetime as in Proposition~$\ref{pr:max}$.
Then a neighborhood $W'$ of $\eta(\R)$ isometrically splits
in the sense that there exists an $n$-dimensional Riemannian manifold $(\Sigma,h)$
such that $(W',g_{\Grad \ol{\bb}{}^{\eta}})$ is isometric to $(\R \times \Sigma,-\td t^2 +h)$.

Furthermore, for every $x \in \Sigma$,
the image $\zeta(t)$ of $(t,x)$ is a straight line bi-asymptotic to $\eta$
and $\Psi$ is constant on $\zeta$.
\end{proposition}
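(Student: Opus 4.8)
The plan is to build on Proposition~\ref{pr:max}, which provides a neighborhood $W$ of $\eta(\R)$ on which $\bb^{\eta}+\ol{\bb}{}^{\eta}=0$, and to follow the strategy of \cite[\S 6]{Es} (and of \cite[\S 5]{Osplit} in the Finsler case): set $V:=\Grad\ol{\bb}{}^{\eta}$, upgrade $\ol{\bb}{}^{\eta}$ to $C^{\infty}$, identify $V$ as a future-directed $g_{V}$-unit timelike geodesic field whose integral curves are the bi-asymptotic straight lines, use the weighted Raychaudhuri equation to obtain rigidity, and finally flow one level set to produce the product. For the regularity I would argue as in the proof of Proposition~\ref{pr:max}: at each $z$ near $\eta$ there is a smooth upper support $\rho(x)=s-d(x,\zeta^{+}(s))$ for $\bb^{\eta}$ at $z$ from a timelike asymptote $\zeta^{+}$ of $\eta$ (Lemma~\ref{lm:GH2.5}, Proposition~\ref{pr:GH3.9}) and likewise a smooth upper support $\bar{\rho}$ for $\ol{\bb}{}^{\eta}$ at $z$ from a timelike asymptote $\zeta^{-}$ of $\bar{\eta}$. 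Since $\bb^{\eta}+\ol{\bb}{}^{\eta}=0$ on $W$, the function $\rho+\bar{\rho}$ is $\ge 0$ near $z$ with equality at $z$, so $\td\bar{\rho}(z)=-\td\rho(z)$ and $\bb^{\eta}$ is differentiable at $z$, being squeezed between $\rho$ from above and $-\bar{\rho}$ from below with first-order agreement. The Hessian of $d(\cdot,\zeta^{\pm}(s))$ restricted to the $g$-orthogonal complement of the asymptote direction is bounded below by Lemma~\ref{lm:GH4.3} and its trace is bounded above by the Laplacian comparison theorem (Theorem~\ref{th:Lcomp}, Corollary~\ref{cr:Lcomp}); together these give uniform two-sided $C^{2}$-bounds on the supports, so $\ol{\bb}{}^{\eta}\in C^{1,1}$ near $\eta(\R)$ and $V$ is a locally Lipschitz, future-directed, $g_{V}$-unit timelike vector field. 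By Lemma~\ref{lm:line} a straight line bi-asymptotic to $\eta$ passes through each point of $W$, and by \eqref{eq:bzeta} together with \eqref{eq:grad} this line is an integral curve of $V$; as these lines are $L$-geodesics, $V$ is a geodesic field, $D^{V}_{V}V=0$. Since the exponential map of a Berwald spacetime is smooth, a standard bootstrap (writing $V$ via the $g_{V}$-normal geodesics issuing from one $C^{1,1}$ level set and checking that they fill a neighborhood) promotes $\ol{\bb}{}^{\eta}$ to $C^{\infty}$ on a neighborhood $W'$ of $\eta(\R)$.

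Next I would extract the rigidity along each straight line $\zeta$ through a point of $W'$. The $\ez$-completeness forces the right-hand side of \eqref{eq:Lcomp} to tend to $0$, so the one-sided comparison estimates \eqref{eq:Lcomp} and \eqref{eq:Lcomp'}, taken in both time directions and combined with $\bb^{\eta}+\ol{\bb}{}^{\eta}=0$, pin the weighted Laplacian of $\bb^{\eta}$ to $0$ along $\zeta$, equivalently the expansion scalar $\theta_{\ez}\equiv 0$ for the Lagrange tensor field built from the level sets of $\ol{\bb}{}^{\eta}$. Feeding $\theta_{\ez}\equiv 0$ into the weighted Raychaudhuri identity \eqref{eq:Rayeq}, every remaining term is nonnegative ($\trace(\sigma_{\ez}^{2})\ge 0$, $\Ric_{N}(\zeta^{*})\ge 0$ by hypothesis, and the $N$-term is manifestly nonnegative for the admissible range of $N$), so each vanishes: $\sigma_{\ez}\equiv 0$, $\Ric_{N}(\zeta^{*})\equiv 0$, and the $N$-term forces $(\Psi\circ\zeta)'\equiv 0$, i.e.\ $\Psi$ is constant on $\zeta$. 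Then \eqref{eq:B_e} with $\theta_{\ez}=0$, $\sigma_{\ez}=0$, $(\Psi\circ\zeta)'=0$ gives $\sB\equiv 0$; together with $D^{V}_{\dot{\zeta}}V=0$ and the symmetry \eqref{eq:symm} (in the Berwald case, cf.\ Remark~\ref{rm:Hess}) this means the second fundamental form of the level sets of $\ol{\bb}{}^{\eta}$ with respect to the $g_{V}$-normal $V$ vanishes, so the level sets are totally geodesic for $g_{V}$ and their induced metric is invariant under the flow of $V$.

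Finally I would assemble the splitting. Fix a level set $\Sigma:=(\ol{\bb}{}^{\eta})^{-1}(c_{0})\cap W'$, which is $g_{V}$-spacelike, with induced Riemannian metric $h:=g_{V}|_{\Sigma}$, and let $\Phi_{t}$ be the flow of $V$; after shrinking $W'$ to a thin flow-invariant tube around $\eta(\R)$, the map $(t,x)\mapsto\Phi_{t}(x)$ is a diffeomorphism of $\R\times\Sigma$ onto $W'$ (the flow lines are straight lines defined on all of $\R$ by timelike geodesic completeness). Since $V$ is a $g_{V}$-unit timelike geodesic field $g_{V}$-orthogonal to the level sets (from $g_{V}(V,\cdot)=\td\ol{\bb}{}^{\eta}$, see \eqref{eq:grad}) and the level sets are totally geodesic with $t$-independent induced metric, the pullback of $g_{\Grad\ol{\bb}{}^{\eta}}=g_{V}$ is exactly $-\td t^{2}+h$; by construction $\Phi_{t}(x)$ traces the integral curve of $V$ through $x$, which is a straight line bi-asymptotic to $\eta$ on which $\Psi$ is constant.

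The hard part, I expect, will be the two transition points. The first is bootstrapping $\ol{\bb}{}^{\eta}$ from $C^{1,1}$ to $C^{\infty}$: once the bi-asymptotic lines are known to foliate $W'$ by $L$-geodesics and the exponential map is smooth, $\ol{\bb}{}^{\eta}$ should be smooth, but making this rigorous requires some care with the normal exponential map. The second is relating the Raychaudhuri identity with $\theta_{\ez}\equiv 0$ to the vanishing of the shape operator of the level sets and to the constancy of their induced metric along the flow, while ensuring throughout that the connection in play is the Chern connection (equivalently, by Remark~\ref{rm:Hess}, that the relevant Hessian is the ordinary second derivative along geodesics) — which is precisely where the Berwald hypothesis is indispensable.
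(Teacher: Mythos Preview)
Your overall strategy is sound and would work, but the paper takes a somewhat different and more direct route that sidesteps the regularity bootstrap you flag as delicate. Instead of first upgrading $\ol{\bb}{}^{\eta}$ to $C^{1,1}$ and then bootstrapping to $C^{\infty}$, the paper reverses the order: along each bi-asymptotic straight line $\zeta$ it works directly with the smooth support functions $\bar{\rho}_s$, shows (using that $\zeta$ has no conjugate points, so $\sR\equiv 0$ on $N_{\zeta}$ by \cite[Proposition~7.6]{LMO}) that the associated operators $\sB^s$ converge monotonically to a smooth limit $\sB$ satisfying $\sB'+\sB^2=0$, and then uses \cite[Proposition~5.8]{LMO} in both time directions together with the Raychaudhuri identity \eqref{eq:Rayeq} to force $\theta_{\ez}\equiv 0$, $\sigma_{\ez}\equiv 0$, $(\Psi\circ\zeta)'\equiv 0$ and hence $\sB\equiv 0$. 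The punchline for regularity is then purely convex-analytic: since the Hessians of the support functions at each point can be made arbitrarily close to zero by taking $s$ large, both $\bb^{\eta}$ and $\ol{\bb}{}^{\eta}$ are concave along every geodesic in $W$ (Remark~\ref{rm:Hess} is used here), and combined with $\bb^{\eta}+\ol{\bb}{}^{\eta}=0$ this makes them affine along every geodesic---hence smooth, with $\Grad^2\ol{\bb}{}^{\eta}\equiv 0$ immediately. The splitting is then obtained via the de~Rham--Wu decomposition rather than by the flow argument you sketch. What the paper's approach buys is that it never needs the $C^{1,1}$ intermediate step nor the bootstrap through the normal exponential map of a low-regularity level set (your acknowledged ``hard part''); what your approach buys is that it hews closer to the standard Lorentzian template and makes the role of Lemma~\ref{lm:GH4.3} and the Laplacian comparison more visibly parallel. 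One ingredient you do not mention but which the paper uses explicitly is the vanishing of $\sR$ along complete straight lines; without it, the limit Riccati equation and the monotonicity leading to $\theta_{\ez}\equiv 0$ are harder to justify.
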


The latter assertion is interpreted as the splitting of the weight function $\Psi$.

\begin{proof}
We again suppress $\eta$ in $\bb^{\eta}$ and $\ol{\bb}{}^{\eta}$.
As is common with splitting theorems, we shall show that $\bb$ is affine
(see \cite[Proposition~6.3]{Es} for the Lorentzian case
and \cite[Section~4]{EH} for the Riemannian case),
for which the Berwald condition is essential
(we refer to \cite[Lemma~5.1]{Osplit} for the Finsler case).

Fix $z \in W$ and let $\zeta:\R \lra M$ be the timelike straight line of unit speed
bi-asymptotic to $\eta$ with $\zeta(0)=z$ (given by Lemma~\ref{lm:line}).
Since there is no conjugate point along $\zeta$,
we have $\sR(t)=0$ on $N_{\zeta}(t)$ for any $t \in \R$ by \cite[Proposition~7.6]{LMO}.
Similarly to the proof of Proposition~\ref{pr:max}, we fix (large) $s>0$ and consider
\[ \bar{\rho}_s(x) :=\ol{\bb}(z) +s -d\big( \zeta(-s),x \big), \]
which is an upper support function for $\ol{\bb}$ at $z$.
Now, let $\sJ_s(t):N_{\zeta}(t) \lra N_{\zeta}(t)$ be the Lagrange tensor field along $\zeta$
with $\sJ_s(-s)=0$ and $\sJ'_s(-s)=\sI_n$ (see \cite[Proposition~5.13]{LMO} for a construction).
We set $\sB^s:=\sJ'_s \sJ_s^{-1}$, and observe that
it is the Hessian of the distance function $d_s:=d(\zeta(-s),\cdot)$ as we saw in the proof of
\cite[Theorem~5.8]{LMO2}.
Precisely, we have $[\sB^s(t)](w)=\Grad^2(-d_s)(w)$ for all $w \in N_{\zeta}(t)$.

Next we define
\[ \sB^s_{\ez} :=\e^{\frac{2(1-\ez)}{n} \Psi \circ \zeta} \bigg(
 \sB^s -\frac{(\Psi \circ \zeta)'}{n} \sI_n \bigg), \qquad
 \theta^s_{\ez}:=\trace(\sB^s_{\ez}), \]
similarly to \eqref{eq:B_e}.
Since $\zeta$ contains no conjugate point and is $\ez$-complete,
it follows from \cite[Proposition~5.8]{LMO} that $\theta^s_{\ez} \ge 0$.
Combining this lower bound and the fact that $\sB^s(0)$ is non-increasing in $s$
as in the proof of Lemma~\ref{lm:GH4.3},
we can take the limit $\sB:=\lim_{s \to \infty} \sB^s$
(we remark that only $\sB^s$ depends on $s$ in the definition of $\sB^s_{\ez}$).
Note that $\sB$ is smooth by the bootstrap argument
and satisfies the Riccati equation \eqref{eq:Ricc} with $\sR \equiv 0$,
i.e., $\sB' +\sB^2=0$.
Then we define $\sB_{\ez}$ in the same way as \eqref{eq:B_e}
and observe from the absence of conjugate points and \cite[Proposition~5.8]{LMO} that
$\theta_{\ez} \equiv 0$.
Therefore it follows from \eqref{eq:Rayeq} and $\Ric_N(\dot{\zeta}) \ge 0$ that
$(\Psi \circ \zeta)' \equiv 0$, $\trace(\sigma_{\ez}^2) \equiv 0$
and $\Ric_N(\dot{\zeta}) \equiv 0$.
In particular, we have $\sB \equiv 0$.

We deduce from $\sB \equiv 0$ that the Hessian of $\bar{\rho}_s$ at $z$
can be arbitrarily close to zero as $s$ goes to infinity.
We similarly find that $\rho_s(x):=\bb(z)+s-d(x,\zeta(s))$
is an upper support function for $\bb$ with the same property.
Hence, for any geodesic $\xi:[0,1] \lra W$,
the functions $\bb \circ \xi$ and $\ol{\bb} \circ \xi$
have upper support functions whose second order derivatives arbitrarily close to zero
(here we use the Berwald condition; recall Remark ~\ref{rm:Hess}).
This implies that $\bb \circ \xi$ and $\ol{\bb} \circ \xi$ are concave.
Combining this with $\bb+\ol{\bb}=0$ on $W$,
we find that $\bb \circ \xi$ and $\ol{\bb} \circ \xi$ are affine functions
along every geodesic in $W$.
Therefore $\ol{\bb}|_W$ is smooth and every level set of $\ol{\bb}$ in $W$ is totally geodesic.
Moreover, the gradient vector field $\Grad \ol{\bb}$ is parallel in the sense that
$\Grad^2 \ol{\bb} \equiv 0$.

Now we set
\[ \Sigma :=\ol{\bb}{}^{-1}(0) \cap W =\bb^{-1}(0) \cap W \]
and define a map $\Theta:\R \times \Sigma \lra M$ by $\Theta(t,x):=\zeta_x(t)$,
where $\zeta_x:\R \lra M$ is the timelike straight line of unit speed
bi-asymptotic to $\eta$ with $\zeta_x(0)=x$.
By construction $\zeta_x$ is an integral curve of $\Grad \ol{\bb}$
and satisfies $\bb(\zeta_x(t))=-\ol{\bb}(\zeta_x(t))=t$ for all $t \in \R$ (recall \eqref{eq:bzeta}).
In particular, $\bb+\ol{\bb}=0$ holds on the image of $\Theta$.
Since every integral curve of $\Grad \ol{\bb}$ is a geodesic,
$\Grad \ol{\bb}$ is parallel also with respect to $g_{\Grad \ol{\bb}}$
(see \cite[Proposition~3.2]{LMO}).
Hence one can apply the de Rham--Wu decomposition theorem \cite{Wu}
to see that $\Theta$ is a local isometry,
where $\Sigma$ and $\Theta(\R \times \Sigma)$ are equipped with
the Riemannian metric $g_{\Grad \ol{\bb}}|_{T\Sigma}$
and the Lorentzian metric $g_{\Grad \ol{\bb}}$, respectively.
Furthermore, $\Theta$ is injective since asymptotes of $\eta$ do not intersect.
This completes the proof of the former assertion by setting $W':=\Theta(\R \times \Sigma)$.
The latter assertion $(\Psi \circ \zeta_x)' \equiv 0$ was already seen above.
$\qedd$
\end{proof}

One can see from the above proof the following corollary on the behavior of
the original Lorentz--Finsler structure $L$
(compare this with \cite[Proposition~5.2]{Osplit} in the Finsler setting).

\begin{corollary}[Local isometric translations]\label{cr:Ltrans}
Let $(M,L,\Psi)$ be as in Proposition~$\ref{pr:Lsplit}$.
Then, in the product structure $W'=\R \times \Sigma$, the translations
\[ \Phi_t(s,x):=(s+t,x), \qquad (s,x) \in \R \times \Sigma,\ t \in \R, \]
are isometric transformations of $(W',L)$.
\end{corollary}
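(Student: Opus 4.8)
The plan is to show that the differential of each translation $\Phi_t$ coincides with the parallel transport along the integral curves of $V:=\Grad\ol{\bb}{}^{\eta}$, and then to invoke the fact that parallel transports preserve $L$ in Berwald spacetimes. Recall first from the proof of Proposition~\ref{pr:Lsplit} that $W'=\Theta(\R\times\Sigma)$ with $\Theta(t,x)=\zeta_x(t)$, that each $\zeta_x:\R\lra M$ is a complete timelike geodesic which is an integral curve of $V$, and that $V$ is parallel, i.e.\ $\Grad^2\ol{\bb}{}^{\eta}\equiv 0$, equivalently $D_wV=0$ for every tangent vector $w$ (the choice of reference vector being immaterial by the Berwald condition). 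In particular $\Phi_t$ is precisely the time-$t$ map of the flow of $V$ on $W'$, hence a well-defined diffeomorphism of $W'$ for every $t\in\R$.

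Next I would prove the claim that, for each $(s,x)$, the map $d\Phi_t:T_{\zeta_x(s)}W'\lra T_{\zeta_x(s+t)}W'$ equals the parallel transport along $\zeta_x$ from $\zeta_x(s)$ to $\zeta_x(s+t)$. Extend an arbitrary $w\in T_x\Sigma$ to a $\Phi$-invariant vector field $W$ on $W'$, so that $[V,W]\equiv 0$ and $d\Phi_t(W_{\zeta_x(s)})=W_{\zeta_x(s+t)}$ by construction; together with $V$ itself, such fields span $TW'$. Since the Chern connection of a Berwald spacetime is torsion-free on $M$ (the connection coefficients in \eqref{eq:Gamma} are symmetric in the lower indices; under the metrizability assumption it is moreover a Levi-Civita connection), we obtain $D_VW=D_WV+[V,W]=0$, so $W$ is parallel along every flow line $\zeta_x$; the same holds trivially for $V$ because $\zeta_x$ is a geodesic. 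Consequently $W_{\zeta_x(s+t)}$ is the parallel transport of $W_{\zeta_x(s)}$, and by linearity $d\Phi_t$ agrees with parallel transport on all of $T_{\zeta_x(s)}W'$. (Equivalently, the variation field of the congruence $\{\zeta_y\}$ along $\zeta_x$ is a Jacobi field which is parallel because the associated Lagrange tensor is constant, $\sB\equiv 0$ having been established in the proof of Proposition~\ref{pr:Lsplit}.)

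Finally, in a Berwald spacetime parallel transports preserve the Lorentz--Finsler structure $L$ (see, e.g., \cite[Proposition~10.1.1]{BCS}, as already used in Step~\ref{step4} of the proof of Proposition~\ref{pr:max}). Combining this with the previous step gives $L(d\Phi_t(v))=L(v)$ for all $v\in TW'$, which is exactly the assertion that each $\Phi_t$ is an isometric transformation of $(W',L)$. The crux of the argument — and the only place where the Berwald hypothesis is genuinely needed — is this identification of $d\Phi_t$ with parallel transport together with the invariance of $L$ under parallel transport: without them, the de Rham--Wu splitting of Proposition~\ref{pr:Lsplit} would only guarantee that $\Phi_t$ preserves the second-order approximation $g_{\Grad\ol{\bb}{}^{\eta}}$, not the full structure $L$ (cf.\ Remark~\ref{rm:nonBer}).
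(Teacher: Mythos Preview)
Your proof is correct and follows essentially the same approach as the paper: both show that $d\Phi_t$ coincides with Chern parallel transport along the straight lines $\zeta_x$ and then use that parallel transport preserves $L$ in Berwald spacetimes. The only minor difference is that the paper establishes parallelness of $d\Phi_t(v)$ by first observing it is $g_{\Grad\ol{\bb}}$-parallel (from the product splitting of Proposition~\ref{pr:Lsplit}) and then invoking \cite[Proposition~3.2]{LMO} to pass to the Chern connection, whereas you obtain Chern-parallelness directly from torsion-freeness together with $\Grad^2\ol{\bb}{}^{\eta}\equiv 0$; the final step in the paper is the explicit computation $\frac{\td}{\td t}L(V)=g_V(D_{\dot{\zeta}}V,V)=0$, which is precisely the content of the fact you cite.
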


\begin{proof}
It suffices to show that $\Phi_t$ preserves $L$.
Given a straight line $\zeta(t)=\Theta(t,x)$ and $v \in T_xM$,
the vector field $V(t):=\td \Phi_t(v)$ along $\zeta$ is parallel with respect to $g_{\Grad \ol{\bb}}$
by Proposition~\ref{pr:Lsplit}.
Since $D_{\dot{\zeta}}V =D_{\dot{\zeta}}^{g_{\Grad \ol{\bb}}}V \equiv 0$
by the fact that all integral curves of $\Grad \ol{\bb}$ are geodesic
and $\dot{\zeta}=\Grad \ol{\bb}(\zeta)$ (see \cite[Proposition~3.2]{LMO}),
we obtain
\[ \frac{\td}{\td t}\big[ L(V) \big]
 =\frac{1}{2} \frac{\td}{\td t}\big[ g_V(V,V) \big]
 =g_V(D_{\dot{\zeta}}V,V) \equiv 0 \]
as desired (see \cite[(3.2)]{LMO} for the second equality).
$\qedd$
\end{proof}

\subsection{Global splitting}\label{ssc:global}

We can extend the local splitting in Proposition~\ref{pr:Lsplit}
by the open-and-closed argument (see \cite[Section~7]{Es} and \cite[\S 14.4]{BEE}).

\begin{theorem}[Splitting theorem]\label{th:Gsplit}
Let $(M,L,\Psi)$ be a connected, timelike geodesically complete,
weighted Berwald spacetime of $\Ric_N \ge 0$ on $\Omega$
with $N \in (-\infty,0) \cup [n,+\infty]$.
Suppose also that
\begin{enumerate}[$(1)$]
\item\label{split-cplt}
the $\ez$-completeness holds for some $\ez$ in the corresponding $\ez$-range \eqref{eq:erange},
\item\label{split-line}
there is a timelike straight line $\eta:\R \lra M$,
\item\label{split-met}
there is a timelike geodesically complete Lorentzian structure $g$ of $M$
whose Levi-Civita connection coincides with
the Chern connection of $L$ such that $\eta$ is timelike for $g$,
\item\label{split-cover}
in the universal cover $(\wt{M},\tilde{g})$ of $(M,g)$,
a lift of $\eta$ is a timelike straight line.
\end{enumerate}
Then the Lorentzian manifold $(M,g_{\Grad \ol{\bb}{}^{\eta}})$ isometrically splits
in the sense that there exists an $n$-dimensional complete Riemannian manifold $(\Sigma,h)$
such that $(M,g_{\Grad \ol{\bb}{}^{\eta}})$ is isometric to $(\R \times \Sigma,-\td t^2 +h)$.

Furthermore, for every $x \in \Sigma$,
the image $\zeta(t)$ of $(t,x)$ is a straight line bi-asymptotic to $\eta$
and $\Psi$ is constant on $\zeta$.
\end{theorem}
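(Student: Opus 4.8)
The plan is to globalize the local isometric splitting from Proposition~\ref{pr:Lsplit} by a connectedness (open-and-closed) argument on the set of straight lines bi-asymptotic to $\eta$, following the template of \cite[Section~7]{Es} and \cite[\S 14.4]{BEE}. First I would let $\Sigma$ be the maximal connected totally geodesic spacelike hypersurface through $\eta(0)$ that can be written as a level set of $\ol{\bb}{}^{\eta}$, and set $\Theta(t,x):=\zeta_x(t)$ where $\zeta_x$ is the straight line bi-asymptotic to $\eta$ with $\zeta_x(0)=x$. The key point is that bi-asymptotes do not intersect, so $\Theta$ is injective on its domain of definition; the local result gives, for each point $z$ of the image, a neighborhood on which $\bb^{\eta}+\ol{\bb}{}^{\eta}=0$, $\Grad^2\ol{\bb}{}^{\eta}\equiv 0$, $\Psi$ is constant along the flow lines of $\Grad\ol{\bb}{}^{\eta}$, and $g_{\Grad\ol{\bb}{}^{\eta}}$ splits as $-\td t^2+h$. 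I would then define $W_\infty\subset\R\times\Sigma$ to be the maximal set on which $\Theta$ is defined and is a local isometry onto an open subset of $M$ with the product metric, and show $W_\infty=\R\times\Sigma$.

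To carry this out, I would run the open-and-closed argument along the $\Sigma$ directions. \emph{Openness} is immediate from Proposition~\ref{pr:Lsplit}: near any $x\in\Sigma$ for which $\zeta_x$ is defined on all of $\R$, a whole $\Sigma$-neighborhood of $x$ carries bi-asymptotic straight lines and the local splitting applies, so the good set is open in $\Sigma$. \emph{Closedness} is where the hypotheses do the work. Suppose $x_k\to x$ in $\Sigma$ with each $\zeta_{x_k}:\R\to M$ a complete bi-asymptotic straight line; I want $\zeta_x:\R\to M$ likewise complete. Here timelike geodesic completeness guarantees the geodesic $\zeta_x$ with $\dot\zeta_x(0)=\lim\dot\zeta_{x_k}(0)$ extends to all of $\R$, and the limit-curve/limit-maximizing machinery of Section~\ref{sc:ray} (Lemmas~\ref{lm:GH2.3}, \ref{lm:GH2.4} and the discussion around them) shows the limit curve is again a maximizing geodesic on every compact interval, hence a straight line; that it is \emph{bi}-asymptotic to $\eta$ follows from continuity of $\bb^{\eta}$ and $\ol{\bb}{}^{\eta}$ (Theorem~\ref{th:GH3.7}) together with $\bb^{\eta}(\zeta_x(t))=-\ol{\bb}{}^{\eta}(\zeta_x(t))=t$, which passes to the limit. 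Thus the good set is closed, and by connectedness of $\Sigma$ it is all of $\Sigma$; $\Theta$ is then defined on $\R\times\Sigma$, injective, and a local isometry.

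It then remains to see that $\Theta$ is \emph{surjective} onto $M$, so that $(M,g_{\Grad\ol{\bb}{}^{\eta}})$ genuinely splits, and that $(\Sigma,h)$ is \emph{complete}. Surjectivity follows from $M$ being connected once one checks $\Theta(\R\times\Sigma)$ is open and closed in $M$: openness is from $\Theta$ being a local homeomorphism, and closedness again from the limit-curve argument — a boundary point of the image is a limit of points $\zeta_{x_k}(t_k)$, and extracting limits of the straight lines (using timelike geodesic completeness to keep them defined on $\R$) exhibits the boundary point as lying on a bi-asymptotic straight line through some point of $\Sigma$. Completeness of $(\Sigma,h)$: since $g_{\Grad\ol{\bb}{}^{\eta}}=-\td t^2+h$ globally and $M=\R\times\Sigma$, any $h$-geodesic in $\Sigma$ is a $g_{\Grad\ol{\bb}{}^{\eta}}$-geodesic in $M$ lying in a slice $\{t=\mathrm{const}\}$; one checks these are spacelike geodesics of $L$ (the slice is totally geodesic and $g_{\Grad\ol{\bb}{}^{\eta}}$-perpendicular to $\Grad\ol{\bb}{}^{\eta}$), and completeness of $\Sigma$ is inherited from that of $M$ in the slice directions — concretely, an $h$-geodesic that leaves every compact set would, transported by $\Phi_t$, violate the structure forced by the product, or one argues as in \cite[\S 14.4]{BEE} that incompleteness of $\Sigma$ contradicts timelike geodesic completeness of $M$ via the product form of the geodesic equation. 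Finally, the last assertion — $\zeta(t)=\Theta(t,x)$ is bi-asymptotic to $\eta$ with $\Psi$ constant along it — is built into the construction of $\Theta$ and was established pointwise in Proposition~\ref{pr:Lsplit}. The main obstacle I anticipate is the closedness step: one must ensure that limits of bi-asymptotic straight lines remain \emph{timelike} (not merely causal) straight lines and remain bi-asymptotic, which is exactly where the absence of global hyperbolicity bites and where the GTCC-type results (Proposition~\ref{pr:GH5.1}) and the continuity of the Busemann functions must be invoked carefully.
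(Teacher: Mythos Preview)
Your overall strategy matches the paper's: globalize Proposition~\ref{pr:Lsplit} by a continuation argument along bi-asymptotic straight lines, as in \cite[Section~7]{Es} and \cite[\S 14.4]{BEE}. The paper organizes this as a supremum-of-radii argument on balls $\Sigma_r \subset Z:=(\bb^{\eta})^{-1}(0)$ rather than an open-and-closed argument on an a priori $\Sigma$; this avoids the circularity in your definition of $\Sigma$ as ``the maximal connected totally geodesic spacelike hypersurface through $\eta(0)$ that can be written as a level set of $\ol{\bb}{}^{\eta}$'', since the smoothness of $\ol{\bb}{}^{\eta}$ is only known where the splitting has already been established.

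There is, however, a genuine gap in the propagation step. When you reach a boundary point $x$ and re-apply Proposition~\ref{pr:Lsplit} to the limiting line $\zeta_x$, the output is a local splitting of $g_{\Grad \ol{\bb}{}^{\zeta_x}}$ and the identity $\bb^{\zeta_x}+\ol{\bb}{}^{\zeta_x}=0$ near $\zeta_x$ --- not, a priori, any statement about $\bb^{\eta}$ or $g_{\Grad \ol{\bb}{}^{\eta}}$. Your sentence ``the local result gives, for each point $z$ of the image, a neighborhood on which $\bb^{\eta}+\ol{\bb}{}^{\eta}=0$, $\Grad^2\ol{\bb}{}^{\eta}\equiv 0$\ldots'' silently assumes $\bb^{\zeta_x}=\bb^{\eta}$ and $\ol{\bb}{}^{\zeta_x}=\ol{\bb}{}^{\eta}$. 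This compatibility is precisely what makes the local pieces glue into a single splitting of the fixed metric $g_{\Grad \ol{\bb}{}^{\eta}}$, and it requires proof. The paper establishes it explicitly: from Lemma~\ref{lm:GH2.5}\eqref{zeta1} one has $\bb^{\zeta_x}\ge\bb^{\eta}$ (since $\zeta_x$ is an asymptote of $\eta$), and the reverse inequality $\bb^{\eta}\ge\bb^{\zeta_x}$ holds because, in the already-obtained product structure on $\Theta(\R\times\Sigma_{r_0})$, $\eta$ is equally an asymptote of $\zeta_x$; together with $I(\zeta_x)=I(\eta)$ this yields $\bb^{\zeta_x}=\bb^{\eta}$ on the overlap. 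Without this identification your extended $\Theta$ need not be a local isometry for $g_{\Grad \ol{\bb}{}^{\eta}}$, the new bi-asymptotes need not be disjoint from the old ones, and the openness and surjectivity claims do not follow. Once this step is in place, the remainder of your argument (limit of $\zeta_{x_k}$ is a straight line via Lemma~\ref{lm:GH2.3}, completeness of $\Sigma$ from timelike geodesic completeness, constancy of $\Psi$ from Proposition~\ref{pr:Lsplit}) proceeds essentially as in the paper.
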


\begin{proof}
We put $Z:=(\bb^{\eta})^{-1}(0)$ for brevity.
We shall show that the local splitting as in Proposition~\ref{pr:Lsplit}
can be extended to cover whole $M$.
To this end, we consider the supremum $r_0$ of $r>0$ such that
there is an isometric embedding
\[ \Theta:(\R \times \Sigma_r,-\td t^2 +h) \lra (M,g_{\Grad \ol{\bb}{}^{\eta}}) \]
as in the proof of Proposition~\ref{pr:Lsplit},
where $\Sigma_r \subset Z$ is the open ball with center $\eta(0)$ and radius $r$
with respect to $g_{\Grad \ol{\bb}{}^{\eta}}|_{TZ}$
and $h$ is the restriction of $g_{\Grad \ol{\bb}{}^{\eta}}|_{TZ}$ to $\Sigma_r$.
Proposition~\ref{pr:Lsplit} ensures $r_0>0$.
Note that, for $x \in \Sigma_{r_0}$, $\zeta_x=\Theta(\cdot,x)$ is
a timelike straight line of unit speed and $I(\zeta_x)=I(\eta) \supset \Theta(\R \times \Sigma_{r_0})$.
Moreover, $\bb^{\zeta_x}=\bb^{\eta}=-\ol{\bb}{}^{\eta}$ holds on $\Theta(\R \times \Sigma_{r_0})$
since it follows from Lemma~\ref{lm:GH2.5}\eqref{zeta1} that $\bb^{\zeta_x} \ge \bb^{\eta}$
and similarly $\bb^{\eta} \ge \bb^{\zeta_x}$.

If $\Theta(\R \times \Sigma_{r_0}) \neq M$, then $r_0<\infty$.
For $x \in \del \Sigma_{r_0}$, take a sequence $(x_k)_{k \in \N}$ in $\Sigma_{r_0}$ converging to $x$.
Then the straight line $\zeta_{x_k}$ converges to a straight line $\zeta_x$
passing through $x$ (thanks to Lemma~\ref{lm:GH2.3}),
and we have $I(\zeta_x)=I(\eta)$ and $\bb^{\zeta_x}=\bb^{\eta}$ as above.
Applying Proposition~\ref{pr:Lsplit} to $\zeta_x$,
one can extend $\Theta$ to $\R \times (\Sigma_{r_0} \cup B^Z_{x,\tau})$ for some $\tau>0$,
where $B^Z_{x,\tau} \subset Z$ is the open ball with center $x$ and radius $\tau$
with respect to $g_{\Grad\ol{\bb}{}^{\eta}}|_{TZ}$.
Note that, for $z \in B^Z_{x,\tau} \setminus \Sigma_{r_0}$,
$\xi_z=\Theta(\cdot,z)$ satisfies $I(\xi_z)=I(\zeta_x)=I(\eta)$
and $\bb^{\xi_z}=\bb^{\zeta_x}=\bb^{\eta}$.
We also observe that, for any $y \in \Sigma_{r_0}$,
$\zeta_y$ and $\xi_z$ do not intersect since $\bb^{\zeta_y}=\bb^{\xi_z}$.

Applying the above argument to each $x \in \del \Sigma_{r_0}$,
we can extend $\Theta$ to $\R \times \Sigma_r$ for some $r>r_0$,
a contradiction to the choice of $r_0$.
Therefore we conclude that $\Theta(\R \times \Sigma_r)=M$ for some $r \in (0,\infty]$
and $\Theta$ gives the desired isometry
(in particular, we have $\Sigma=Z$).

The completeness of $\Sigma$ follows from the timelike geodesic completeness of $(M,L)$.
The last assertion $(\Psi \circ \zeta)' \equiv 0$ has been shown in Proposition~\ref{pr:Lsplit}.
$\qedd$
\end{proof}

Similarly to Corollary~\ref{cr:Ltrans}, we obtain the following.

\begin{corollary}[Isometric translations]\label{cr:Gtrans}
Let $(M,L,\Psi)$ be as in Theorem~$\ref{th:Gsplit}$.
Then, in the product structure $M=\R \times \Sigma$, the translations
\[ \Phi_t(s,x):=(s+t,x), \qquad (s,x) \in \R \times \Sigma,\ t \in \R, \]
are isometric transformations of $(M,L)$.
\end{corollary}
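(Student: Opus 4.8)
The plan is to rerun the argument of Corollary~\ref{cr:Ltrans} over the global product structure $M=\R\times\Sigma$ supplied by Theorem~\ref{th:Gsplit}, in place of the local one of Proposition~\ref{pr:Lsplit}. Since $\Phi_t$ is a diffeomorphism of $M$ by Theorem~\ref{th:Gsplit}, it suffices to show that $\Phi_t$ preserves $L$. First I would fix $x\in\Sigma$ and let $\zeta=\zeta_x:\R\lra M$ be the unit-speed timelike straight line bi-asymptotic to $\eta$ with $\zeta(0)=x$; by Theorem~\ref{th:Gsplit} it satisfies $\zeta(\tau)=\Theta(\tau,x)$, so that under the identification $M\cong\R\times\Sigma$ it is the curve $\tau\longmapsto(\tau,x)$, it is an integral curve of $\Grad\ol{\bb}{}^{\eta}$, and $\dot\zeta(\tau)=\Grad\ol{\bb}{}^{\eta}(\zeta(\tau))$ for all $\tau\in\R$; recall also that $\Grad\ol{\bb}{}^{\eta}$ is parallel, $\Grad^2\ol{\bb}{}^{\eta}\equiv 0$.

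Given $v\in T_xM$, set $V(\tau):=\td\Phi_\tau(v)$, a vector field along $\zeta$ with $V(0)=v$. In the product coordinates $\Phi_\tau(t,y)=(t+\tau,y)$, so $\td\Phi_\tau$ fixes the coordinate frame; since for the metric $-\td t^2+h(y)$ (with $h$ independent of $t$) the coordinate vector fields are $g_{\Grad\ol{\bb}{}^{\eta}}$-parallel along $\zeta$, it follows that $V$ is $g_{\Grad\ol{\bb}{}^{\eta}}$-parallel, i.e.\ $D^{g_{\Grad\ol{\bb}{}^{\eta}}}_{\dot\zeta}V\equiv 0$. Because every integral curve of $\Grad\ol{\bb}{}^{\eta}$ is a geodesic and $\dot\zeta=\Grad\ol{\bb}{}^{\eta}(\zeta)$, the Chern connection of $L$ agrees with the Levi-Civita connection of $g_{\Grad\ol{\bb}{}^{\eta}}$ along $\zeta$ (see \cite[Proposition~3.2]{LMO}; this is where the Berwald hypothesis is used, exactly as in Corollary~\ref{cr:Ltrans}), hence also $D_{\dot\zeta}V\equiv 0$. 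Consequently
\[ \frac{\td}{\td\tau}\big[ L(V) \big] =\frac{1}{2}\frac{\td}{\td\tau}\big[ g_V(V,V) \big] =g_V\big( D_{\dot\zeta}V,V \big) \equiv 0 \]
by \cite[(3.2)]{LMO}, whence $L\big( \td\Phi_\tau(v) \big)=L\big( V(\tau) \big)=L\big( V(0) \big)=L(v)$ for every $\tau\in\R$.

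Finally I would promote this to arbitrary tangent vectors: any $w\in T_{(s,x)}M$ can be written $w=V(s)$ with $V$ as above for the choice $v:=\td\Phi_{-s}(w)\in T_xM$, and then $\td\Phi_t(w)=V(s+t)$, so $L(\td\Phi_t(w))=L(v)=L(V(s))=L(w)$; since $(s,x)\in M$ and $w\in T_{(s,x)}M$ were arbitrary, $\Phi_t$ is an isometric transformation of $(M,L)$. I do not anticipate a real obstacle: the argument rests only on the parallelism of $\Grad\ol{\bb}{}^{\eta}$ from Theorem~\ref{th:Gsplit} together with the two Berwald facts already invoked in Corollary~\ref{cr:Ltrans}, namely reference-vector independence of the covariant derivative along $\zeta$ and the identity $\frac{\td}{\td\tau}[L(V)]=g_V(D_{\dot\zeta}V,V)$; the sole point needing a little care is the passage from the local slice of Proposition~\ref{pr:Lsplit} to the global $\Sigma$ of Theorem~\ref{th:Gsplit}, which is routine.
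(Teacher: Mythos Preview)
Your proposal is correct and follows essentially the same approach as the paper: the paper proves Corollary~\ref{cr:Gtrans} simply by invoking the argument of Corollary~\ref{cr:Ltrans} verbatim over the global product $M=\R\times\Sigma$ from Theorem~\ref{th:Gsplit}, using that $V(\tau)=\td\Phi_\tau(v)$ is $g_{\Grad\ol{\bb}{}^{\eta}}$-parallel, that $D_{\dot\zeta}V=D^{g_{\Grad\ol{\bb}{}^{\eta}}}_{\dot\zeta}V$ via \cite[Proposition~3.2]{LMO}, and the identity $\frac{\td}{\td\tau}[L(V)]=g_V(D_{\dot\zeta}V,V)$ from \cite[(3.2)]{LMO}. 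Your added bookkeeping (the explicit parallelism of coordinate frames in the product metric and the extension from base points in $\Sigma$ to arbitrary $(s,x)$) is routine and accurate.
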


We close the article with some remarks.

\begin{remark}[The case of $\psi_{\fm}$]\label{rm:msplit}
If we consider the weight funtion $\psi_{\fm}$ induced from a measure $\fm$ on $M$
(recall Remarks~\ref{rm:meas} and \ref{rm:psi_m}),
then we have $(\psi_{\fm} \circ \dot{\zeta})' \equiv 0$ which implies that
the push-forward of $\fm$ by $\Phi_t$ coincides with $\fm$.
Hence, $\Phi_t:(M,L,\fm) \lra (M,L,\fm)$ is measure-preserving.
\end{remark}

\begin{remark}[Non-Berwald case]\label{rm:nonBer}
In the non-Berwald case, one cannot obtain isometric translations as in Corollary~\ref{cr:Gtrans}.
This is because, since our hypotheses are imposed only in future-directed timelike directions,
one can modify $L$ in spacelike directions to violate the isometry of $\Phi_t$.
\end{remark}

\begin{remark}[The case of $N=0$]\label{rm:N=0}
In the extremal case of $N=0$,
it was established in \cite{WW2} that a Lorentzian manifold $(M,g,\Psi)$
has a \emph{warped product} splitting (see \cite{Wy} for the Riemannian case).
It is unclear if we can generalize it to the Lorentz--Finsler setting.
In our proof of Proposition~\ref{pr:max},
there is a difficulty in the construction of $\Sigma$ in Step~\ref{step4}
when $\R \times \widehat{\Sigma}$ is a warped product.
\end{remark}

{\it Acknowledgements}.
YL and SO were supported by JSPS Grant-in-Aid for Scientific Research (KAKENHI) 19H01786.
EM thanks the Department of Mathematics of Osaka University for kind hospitality.

{\small

}

\end{document}